\numberwithin{equation}{section}
\newtheorem{Thm}{Theorem}[section]
\newtheorem{Prop}[Thm]{Proposition}
\newtheorem{Lem}[Thm]{Lemma}
\newtheorem{Cor}[Thm]{Corollary}
\theoremstyle{definition}
\newtheorem{Rem}[Thm]{Remark}
\newtheorem{Expl}[Thm]{Example}
\theoremstyle{remark}
\newtheorem*{Rmk*}{Remark}
\newtheorem*{Cl*}{Claim}
\newcommand{\R}{\mathbb{R}}
\newcommand{\bary}{\operatorname{bar}}
\newcommand{\eps}{\varepsilon}
\newcommand{\id}{\operatorname{id}}
\renewcommand{\rho}{\varrho}
\newcommand{\sig}{\sigma}
\title
{
Ball Intersection Properties in Metric Spaces
}
\author{
Benjamin Miesch and
Ma\"el Pav\'on}
\address{Department of Mathematics, ETH Z\"urich, 8092 Z\"urich, Switzerland}
\email{benjamin.miesch@math.ethz.ch}
\email{mael.pavon@math.ethz.ch}
\date{\today}
\begin{document}


\begin{abstract}
We show that in complete metric spaces, $4$-hyperconvexity is equivalent to finite hyperconvexity. Moreover, every complete, almost $n$-hyperconvex metric space is $n$-hyperconvex. This generalizes among others results of Lindenstrauss and answers questions of Aronszajn-Panitchpakdi.

Furthermore, we prove local-to-global results for externally and weakly externally hyperconvex subsets of hyperconvex metric spaces and find sufficient conditions in order for those classes of subsets to be convex with respect to a geodesic bicombing.
\end{abstract}

\maketitle


\section{Introduction} \label{Sec:Intro}


Hyperconvexity and related properties lie at the interface of several fields like fixed point theory~\cite{Sin}, mapping extensions~\cite{AroP,EspL}, functional analysis~\cite{Lin,Nac}, geometric group theory~\cite{Lan} or convex geometry~\cite{Han}. Motivated by these applications, we study weak notions of hyperconvexity and afterwards connect them with the theory of convex subsets. Finally, this leads to a Helly-type theorem for weakly externally hyperconvex subsets.

In the first part of this work we generalize results related to extensions of uniformly continuous functions and compact linear operators. In \cite{Lin}, Lindenstrauss characterizes all Banach spaces $B$ with the property that any compact linear operator with target $B$ possesses an "almost" norm preserving extension in pure metric terms, namely as the Banach spaces which are $n$-hyperconvex for every $n$. A counterpart for uniformly continuous maps between metric spaces was later proven by Esp\'inola and L\'opez, see \cite{EspL}. This motivates a closer look on results concerning $n$-hyperconvexity in general metric spaces. Note that the following definition is slightly different from the one given in \cite{AroP}.

All through this paper, $B(x,r)$ denotes the closed ball with center $x$ and radius $r \geq 0$.
Let $A$ be a subset of a metric space $(X,d)$. The subset $A$ is \ldots
	\begin{itemize}
	\item \emph{$n$-hyperconvex} if for every family of $n$ closed balls $\{ B(x_i,r_i) \}_{i=1}^n$ with $x_i \in A$ and $d(x_i,x_j) \leq r_i + r_j$, we have $\bigcap_{i=1}^n B(x_i,r_i) \cap A \neq \emptyset$;
	\item \emph{almost $n$-hyperconvex} if for every family of $n$ closed balls $\{ B(x_i,r_i) \}_{i=1}^n$ with $x_i \in A$ and $d(x_i,x_j) \leq r_i + r_j$, we have $\bigcap_{i=1}^n B(x_i,r_i + \epsilon) \cap A \neq \emptyset$, for every $\epsilon>0$;
	\item \emph{externally $n$-hyperconvex} in $X$ if for every family of $n$ closed balls $\{ B(x_i,r_i) \}_{i=1}^n$ with $x_i \in X$, $ d(x_i,A) \leq r_i$ and $d(x_i,x_j) \leq r_i + r_j$, we have $\bigcap_{i=1}^n B(x_i,r_i) \cap A \neq \emptyset$;
	\item \emph{weakly externally $n$-hyperconvex} in $X$ if for every $x \in X$ the set $A$ is externally $n$-hyperconvex in $A \cup \{x\}$.
	\end{itemize}
	
Accordingly, we call $A$ \emph{hyperconvex}, \emph{almost hyperconvex}, \emph{externally hyperconvex} or \emph{weakly externally hyperconvex} if the corresponding property holds for arbitrary families of closed balls.

The following two theorems supplement results proven for Banach spaces by Lindenstrauss, see \cite[Lemma~4.2]{Lin} and \cite[Lemma~2.13]{BenL}, and hence completely answer Problem~1 and Problem~4 raised by Aronszajn and Panitchpakdi in \cite{AroP}.

\begin{Thm}\label{Thm:AlmostAndComplete}
	Let $X$ be a complete, almost $n$-hyperconvex metric space for $n \geq 3$. Then $X$ is $n$-hyperconvex.
\end{Thm}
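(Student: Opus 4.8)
The plan is to fix an arbitrary compatible family of $n$ closed balls $\{B(x_i,r_i)\}_{i=1}^n$, with $x_i\in X$ and $d(x_i,x_j)\le r_i+r_j$, and to produce a single point in the exact intersection $\bigcap_{i=1}^n B(x_i,r_i)$; by definition this is precisely what $n$-hyperconvexity asks for. Since the balls are closed, $\bigcap_{i=1}^n B(x_i,r_i)=\bigcap_{\epsilon>0}\bigcap_{i=1}^n B(x_i,r_i+\epsilon)$, and almost $n$-hyperconvexity guarantees that each enlarged intersection is nonempty. The subtlety is that these nested nonempty closed sets need not have vanishing diameter, so completeness does not by itself furnish a common point; the entire difficulty is to select a \emph{Cauchy} sequence of approximate centers.

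Accordingly, I would construct points $p_k$ with $d(p_k,x_i)\le r_i+\epsilon_k$ for all $i$, where $\epsilon_k:=2^{-k}\epsilon_0$, together with a movement bound $d(p_k,p_{k+1})\le C\epsilon_k$ for a fixed constant $C$. Such a sequence is Cauchy since $\sum_k\epsilon_k<\infty$, so by completeness it converges to some $p_\infty$, and $d(p_\infty,x_i)=\lim_k d(p_k,x_i)\le r_i$ places $p_\infty$ in every $B(x_i,r_i)$. Everything thus reduces to one improvement step, which is the heart of the matter: \emph{given $p$ with $d(p,x_i)\le r_i+\epsilon$ for all $i$, find $q$ with $d(p,q)\le C\epsilon$ and $d(q,x_i)\le r_i+\tfrac{\epsilon}{2}$ for all $i$.}

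To carry out this step I would exploit that, for $C\ge 1$, the enlarged ball $B(p,C\epsilon)$ is compatible with every $B(x_i,r_i)$, since $d(p,x_i)\le r_i+\epsilon\le r_i+C\epsilon$; hence $\{B(p,C\epsilon)\}\cup\{B(x_i,r_i)\}_{i=1}^n$ is a compatible family of $n+1$ balls whose exact intersection is exactly the target. As only almost $n$-hyperconvexity is available, the task is to replace these $n+1$ balls by $n$ compatible balls without losing the constraints. Here I would use that almost $2$-hyperconvexity (which follows from almost $n$-hyperconvexity by repeating balls) together with completeness makes $X$ metrically convex, so any two compatible balls $B(a,\alpha),B(b,\beta)$ admit a gluing point $c$ on a geodesic $[a,b]$ with $B(c,\gamma)\subseteq B(a,\alpha)\cap B(b,\beta)$ for $\gamma=\tfrac{1}{2}(\alpha+\beta-d(a,b))$. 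Merging two balls in this way brings the count down to $n$, and applying almost $n$-hyperconvexity with a small auxiliary slack yields $q$. The hypothesis $n\ge 3$ enters precisely here: after devoting one ball to controlling the distance to $p$ and spending one merge, at least two genuine constraints survive, which is what lets the scheme close up.

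The main obstacle is to perform this reduction so that the merged family is still compatible \emph{and} so that the slack of the resulting $q$ is genuinely contracted to $\tfrac{\epsilon}{2}$ rather than amplified. The naive variant—re-centering a ball at $p$ and simply dropping one $B(x_i,r_i)$—fails, because the dropped constraint then acquires slack of order $2\epsilon$ and the iteration diverges; likewise a careless merge shifts the new center off toward one $x_i$ and violates compatibility with the remaining centers. Overcoming this requires choosing the gluing point and the auxiliary slack quantitatively, balancing the geodesic displacement against the radius $\gamma$ so that compatibility with all remaining balls is preserved while the movement stays of order $\epsilon$; verifying these inequalities is the technical core of the argument.
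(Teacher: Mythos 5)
Your outer framework---iterate an improvement step that halves the slack while moving the approximate center by $O(\epsilon_k)$, then pass to the limit by completeness---is sound and is indeed the shape of the paper's arguments (it is exactly how Lemma~\ref{Lem:almost externally (n+1)-hyperconvex} trades almost $(n+1)$-hyperconvexity for $n$-hyperconvexity). The gap is in the improvement step itself: the ``merge'' of two compatible balls $B(a,\alpha),B(b,\beta)$ into an inscribed ball $B(c,\gamma)$ with $\gamma=\tfrac12(\alpha+\beta-d(a,b))$ does not preserve compatibility with the remaining balls, and no choice of gluing point or auxiliary slack can repair this. Concretely, in $l_\infty^2$ (a complete hyperconvex space) take $x_1=(0,0)$, $x_2=(2,0)$, $r_1=r_2=1$ and $x_3=(1,2)$, $r_3=1$: the family is pairwise compatible and $\bigcap_i B(x_i,r_i)=\{(1,1)\}$, but $B(x_1,1)\cap B(x_2,1)=\{1\}\times[-1,1]$ has inradius $\gamma=0$, so the merged ``ball'' is a single point of that segment, and unless you pick precisely $(1,1)$ --- i.e.\ unless you have already solved the three-ball problem --- the merged family is incompatible with $B(x_3,r_3)$ and its intersection is empty. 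The same obstruction appears in your Case~A merge of $B(p,C\epsilon)$ with $B(x_1,r_1)$: in the worst case $d(p,x_1)=r_1+\epsilon$ one gets $\gamma=\tfrac{(C-1)\epsilon}{2}$ while $d(c,x_j)$ can be as large as $r_j+\epsilon+\tfrac{(C+1)\epsilon}{2}$, so the required inequality $d(c,x_j)\le\gamma+r_j$ reduces to $2\epsilon\le 0$ for every $C$. More structurally, if such a compatibility-preserving merge existed in general it would let one deduce the $(n{+}1)$-ball Helly property from the $n$-ball one by pure ball bookkeeping, which is ruled out by $l_1^3$ (3-hyperconvex but not 4-hyperconvex); so the difficulty cannot be purely quantitative.

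What the paper does instead is qualitatively different: it never replaces an intersection of balls by an inscribed ball, but works with the intersections themselves as \emph{sets}, shown to be almost externally $2$-hyperconvex (Lemma~\ref{Lem:externally (n-k)-hyperconvex}). The heart of the proof is Lemma~\ref{Lem:almost 3-hyperconvex}, a zig-zag construction alternating between two such sets $A,A'$ that uses almost $3$-hyperconvexity to find points of $A$ and $A'$ that are simultaneously close to each other, to the target ball and to a base point; the improvement step is thus itself an infinite (Cauchy) process, not a single application of almost $n$-hyperconvexity to a reduced family. The case $n\ge 4$ is then handled by a three-set intersection result (Lemma~\ref{Lem:intersection of three almost externally hyperconvex sets}). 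You would need to replace your merging step by something of this nature; as it stands, the proposal does not close.
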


This implies for instance that the metric completion of an $n$-hyperconvex metric space is $n$-hyperconvex as well. Note that there are complete metric spaces which are almost 2-hyperconvex but not 2-hyperconvex, cf. \cite{AroP}.

\begin{Thm}\label{Thm:FourToFinite}
Let $X$ be a complete metric space and let $A \subset X$ be an arbitrarily chosen non-empty subset. Then, the following hold:
\begin{enumerate}[(i)]
\item $X$ is 4-hyperconvex if and only if $X$ is $n$-hyperconvex for every $n$.
\item $A$ is externally 4-hyperconvex in $X$ if and only if $A$ is externally $n$-hyperconvex in $X$ for every $n$.
\item $A$ is weakly externally 4-hyperconvex in $X$ if and only if $A$ is weakly externally $n$-hyperconvex in $X$ for every $n$.
\end{enumerate}
\end{Thm}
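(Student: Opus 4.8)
One implication is trivial in each of the three parts: if $X$ (in (i)) or $A$ (in (ii), (iii)) enjoys the relevant $n$-ball property for \emph{every} $n$, then in particular it does so for $n=4$. So the plan is to prove each converse, and to do so I would isolate the single engine behind all three, a Helly-type passage from the $4$-ball hypothesis to an \emph{approximate} $n$-ball conclusion, and then remove the approximation using completeness. For part (i) this factorization reads as follows. First I would show, by induction on $n$ and using no completeness, that every $4$-hyperconvex space is almost $n$-hyperconvex. Granting this, completeness enters only through Theorem~\ref{Thm:AlmostAndComplete}: as $X$ is complete and almost $n$-hyperconvex with $n\ge 4\ge 3$, that theorem upgrades the conclusion to honest $n$-hyperconvexity. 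The cases $n\le 4$ being immediate (repeat a ball), the whole burden of (i) thus falls on the implication that a $4$-hyperconvex space is almost $n$-hyperconvex.

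For the inductive step of that implication, suppose $X$ is $4$-hyperconvex and, inductively, almost $(n-1)$-hyperconvex; let $\{B(x_i,r_i)\}_{i=1}^n$ satisfy $d(x_i,x_j)\le r_i+r_j$ and fix $\eps>0$. Applying the inductive hypothesis to each of the $n$ subfamilies obtained by deleting one ball yields, for any $\delta>0$, points $a_i\in\bigcap_{j\ne i}B(x_j,r_j+\delta)$, so that $a_i$ misses at most the $i$-th ball. The remaining task is to fuse these ``all-but-one'' points into a single point of $\bigcap_{i=1}^n B(x_i,r_i+\eps)$, and this is where the $4$-ball hypothesis is spent: merging two such points at a time, via the ``between-point'' construction furnished by $2$-hyperconvexity, should decrease the number of balls the current point can miss, at the cost of a drift from the other centers that one controls using the exact saturation $d(x_i,x_j)=r_i+r_j$ of the auxiliary balls. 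Budgeting $\delta$ and the per-step drift finely enough keeps the cumulative error below $\eps$, producing the desired point after finitely many merges.

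Parts (ii) and (iii) would follow the same blueprint once the center conditions are adjusted, centers ranging over $X$ with $d(x_i,A)\le r_i$ in the external case, and over $A\cup\{x\}$ for a single extra point $x$ in the weakly external case. The identical Helly reduction turns the external (resp.\ weakly external) $4$-ball hypothesis into almost external (resp.\ almost weakly external) $n$-hyperconvexity, and to finish one needs the external analogues of Theorem~\ref{Thm:AlmostAndComplete}. These I would establish by the same Cauchy-sequence argument that proves Theorem~\ref{Thm:AlmostAndComplete}, the relevant point being that external (indeed external $1$-) hyperconvexity forces $A$ to be closed, hence complete as a subset of the complete space $X$: taking a single ball $B(a,0)$ for $a$ in the closure of $A$ shows $a\in A$.

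The hard part will be the fusion step. The triangle inequality relates the auxiliary points $a_i$ to the centers $x_j$ only up to additive errors of the order of the radii, which dwarf $\eps$, so a naive ``pull the current point toward the missed center'' argument loses far too much; indeed in a merely $4$-hyperconvex space one cannot even assume closed balls are convex, so midpoints of points in a ball need not remain in it. The real content is therefore to perform each merge by exploiting the saturation of the auxiliary balls rather than the crude triangle bound, spending only a prescribed fraction of the $\eps$-budget per step and keeping the estimates uniform in $n$. Getting this bookkeeping right, and adapting it to centers lying outside $A$, is where the weight of the theorem lies.
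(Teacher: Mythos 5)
Your reduction to ``$4$-hyperconvex $\Rightarrow$ almost $n$-hyperconvex, then apply Theorem~\ref{Thm:AlmostAndComplete}'' is a reasonable-looking plan, but the entire content of the theorem is concentrated in the fusion step, and that step is not carried out; worse, the strategy you sketch for it runs into a circularity. Having points $a_i\in\bigcap_{j\ne i}B(x_j,r_j+\delta)$, a ``merge'' of $a_1$ and $a_2$ must produce a point lying in $B(x_1,r_1+\delta')$ and $B(x_2,r_2+\delta')$ \emph{while remaining inside the $n-2$ untouched balls} $B(x_3,r_3+\delta),\dots,B(x_n,r_n+\delta)$. A single application of $4$-hyperconvexity lets you impose at most four ball constraints at once (say two ``anchor'' balls around $a_1,a_2$ plus two of the $x_j$-balls), so controlling all $n-2$ background constraints in one merge already presupposes an $n$-ball intersection property --- exactly what you are trying to prove. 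Balls in a $4$-hyperconvex space are not convex in any sense that would let you keep the merged point in the background balls for free, and the ``saturation $d(x_i,x_j)=r_i+r_j$'' you invoke is not available (the hypothesis is only an inequality). You yourself flag this as ``where the weight of the theorem lies,'' which is an accurate self-assessment: the proposal identifies the difficulty but does not resolve it. The unproved claim that $4$-hyperconvexity implies almost $n$-hyperconvexity \emph{without} completeness is also not substantiated (it is true, but in the paper it is obtained only a posteriori, via the completion).

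The paper's route is structurally different and is designed precisely to avoid this circularity: instead of fusing ``all-but-one'' points, it proves by induction that every finite intersection $\bigcap_{i=1}^n B(x_i,r_i)$ is externally $2$-hyperconvex (Proposition~\ref{Prop:4-hyperconvex is n-hyperconvex}), so that only \emph{pairwise} intersection conditions ever need to be checked. The engine is Lemma~\ref{Lem:intersection of three almost externally hyperconvex sets} (three pairwise intersecting sets, one in $\mathcal{\tilde{E}}_3$, two in $\mathcal{\tilde{E}}_2$, have a common point), which rests on the iterative Cauchy-sequence construction of Lemma~\ref{Lem:almost 3-hyperconvex} and uses completeness essentially at every stage, together with Lemma~\ref{Lem:intersection of externally 3-hyperconvex is externally 2-hyperconvex} to propagate the class $\mathcal{E}_2$ through the induction. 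If you want to salvage your outline, you would need to first establish some binary Helly property for intersections of balls --- at which point you would have reconstructed the paper's argument rather than bypassed it.
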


Observe that this is the best we can hope for, since there are metric spaces which are $3$-hyperconvex but not $4$-hyperconvex, e.g. $l_1^3$, and there is a subset $A$ of $l_\infty(\mathbb{N})$ which is externally $n$-hyperconvex for every $n$, but fails to be hyperconvex, see Example~\ref{Expl:externally n-hyperconvex but not hyperconvex}.

We use a completely new approach to this problem, establishing the finite binary intersection property for externally 2-hyperconvex subsets. In contrast to our methods, the proofs of Lindenstrauss are based on the existence of barycenters and therefore they can easily be adapted to spaces with a geodesic bicombing (see the definition below). In the appendix we will give a proof of the more general theorem of Lindenstrauss on the $(n,k)$-intersection property in this setting, compare \cite[Theorem~4.1]{Lin}.

The second part of this work relates weak notions of convexity of subsets of a metric space to the notions of (weak) external hyperconvexity. The first challenge at this point consists in defining convexity in general hyperconvex metric spaces. In this context, the idea of geodesic bicombings as introduced by Lang in \cite{Lan} arises from the lack of unique geodesics.

A \emph{geodesic bicombing} is a map
\[
	\sigma \colon X \times X \times [0,1] \to X
\]
such that for each pair $(x,y) \in X \times X$, the map $\sigma_{xy} := \sigma(x,y,\cdot) \colon [0,1] \to X$ is a constant speed geodesic from $x$ to $y$, i.e. $\sigma_{xy}(0)=x$, $\sigma_{xy}(1)=y$ and for all $s,t \in [0,1]$, one has $d(\sigma_{xy}(s),\sigma_{xy}(t))=|s-t| d(x,y)$.
Moreover, we assume that the choice of geodesics is symmetric, i.e. for all $x,y \in X$ and $t \in [0,1]$ one has 
\[
	\sigma_{yx}(t) = \sigma_{xy}(1-t),
\]
and that the geodesics fulfill the following weak convexity assumption:
For all $x,y,x',y' \in X$ and $t\in [0,1]$, one has
\[
	d(\sigma_{xy}(t),\sigma_{x'y'}(t)) \leq (1-t)d(x,x')+t d(y,y').
\]

Recall that every hyperconvex metric space admits a geodesic bicombing. Furthermore, if the metric space $X$ is \emph{proper}, i.e. closed balls in $X$ are compact, and admits a geodesic bicombing, then the space $X$ also possesses a \emph{convex} geodesic bicombing $\sigma$, that is a geodesic bicombing with the property that for all $x,y,x',y' \in X$ the function $t \mapsto d(\sigma_{xy}(t),\sigma_{x'y'}(t))$ is convex, cf. \cite[Theorem~1.1]{DesL}. Geodesics of a convex geodesic bicombing especially are \emph{straight} curves, i.e. $t \mapsto d(z,\sigma_{xy}(t))$ is convex for every $z \in X$, which leads to uniqueness results.
For instance, in normed vector spaces or in metric spaces with finite combinatorial dimension, straight curves are unique and therefore such spaces admit at most one convex geodesic bicombing \cite[Theorem~3.3, Proposition~4.3]{DesL}. This unique convex geodesic bicombing must be \emph{consistent}, that is, for every $x,y \in X$ and $0 \leq s \leq t \leq 1$, we have $\sigma_{x'y'}([0,1]) \subset \sigma_{xy}([0,1])$, where $x' = \sigma_{xy}(s)$, $y' = \sigma_{xy}(t)$. Note that every consistent geodesic bicombing is convex, but the converse is not true in general.
For an extensive study of existence and uniqueness of geodesic bicombings, we refer to \cite{DesL,BasM}.

Given a metric space $X$ with a geodesic bicombing $\sigma$, we say that a subset $A$ of $X$ is \emph{$\sigma$-convex} if for every $x,y \in A$ we have $\sigma_{xy}([0,1]) \subset A$. If $X$ is a normed vector space with the usual linear bicombing, this notion coincides with the ordinary convexity.

We will show that, under the appropriate assumptions on the bicombing $\sigma$, (weakly) externally hyperconvex subsets are $\sigma$-convex. We then combine these convexity results with local-to-global properties of (weakly) externally hyperconvex subsets with a geodesic bicombing. A subset $A$ of a metric space $X$ is \emph{uniformly locally (weakly) externally hyperconvex} if there is some $r>0$ such that for every $x$ in $A$ the set $A \cap B(x,r)$ is (weakly) externally hyperconvex in $B(x,r)$.

With these definitions at hand, we can state our third main result.

\begin{Thm}\label{Thm:BicombingTheorem}
Let $X$ be a hyperconvex metric space, let $A \subset X$ be any subset and let $\sigma$ denote a convex geodesic bicombing on $X$.
\begin{enumerate}[(I)]
\item The following are equivalent:
\begin{enumerate}[(i)]
\item $A$ is externally hyperconvex in $X$.
\item $A$ is $\sigma$-convex and uniformly locally externally hyperconvex.
\end{enumerate}

\item If straight lines in $X$ are unique, the following are equivalent:
\begin{enumerate}[(i)]
\item $A$ is weakly externally hyperconvex and possesses a consistent geodesic bicombing.
\item $A$ is $\sigma$-convex and uniformly locally weakly externally hyperconvex.
\end{enumerate}
\end{enumerate}
\end{Thm}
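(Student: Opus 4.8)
The plan is to prove both equivalences by splitting into the two implications, treating the ``global $\Rightarrow$ local'' direction as a soft consequence of the convexity theorems announced in the introduction together with a restriction lemma, and concentrating the real work on the ``local $\Rightarrow$ global'' direction. For (I), the implication (i)$\Rightarrow$(ii): the $\sigma$-convexity of an externally hyperconvex subset is exactly the convexity statement proved earlier, so I would simply invoke it. For the uniform local part I would prove a restriction lemma, namely that if $A$ is externally hyperconvex in $X$ then the trace $A\cap B(x,r)$ is externally hyperconvex in $B(x,r)$ for every $x\in A$ and every $r>0$. Given balls $\{B(y_j,s_j)\}$ with $y_j\in B(x,r)$, $d(y_j,A\cap B(x,r))\le s_j$ and $d(y_j,y_k)\le s_j+s_k$, I would adjoin the ball $B(x,r)$ to the family and apply external hyperconvexity of $A$ in the ambient hyperconvex space $X$: the pairwise conditions with the new ball hold since $d(x,y_j)\le r\le r+s_j$ and $d(x,A)=0\le r$, and the resulting common point automatically lies in $B(x,r)$, hence in $A\cap B(x,r)$. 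Taking $r$ to be the uniform radius yields (ii).

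The heart of the matter is (ii)$\Rightarrow$(i). Fix a uniform radius $r$ and a family $\{B(x_i,r_i)\}_{i\in I}$ with $x_i\in X$, $d(x_i,A)\le r_i$ and $d(x_i,x_j)\le r_i+r_j$. Since $X$ is hyperconvex the intersection $C:=\bigcap_i B(x_i,r_i)$ is a nonempty, closed, hyperconvex and, because balls are $\sigma$-convex for a convex bicombing (straightness gives $t\mapsto d(x,\sigma_{yz}(t))\le\max(d(x,y),d(x,z))$), also $\sigma$-convex subset of $X$; the goal is $A\cap C\ne\emptyset$. I would run a descent toward $C$: starting from any $a_0\in A$ I would build a sequence $(a_k)\subset A$ with $\delta_k:=d(a_k,C)$ decreasing to $0$ and $\sum_k d(a_k,a_{k+1})<\infty$, so that $(a_k)$ is Cauchy, converges to some $a_\infty\in A$ (as $A$ is closed), and satisfies $d(a_\infty,C)=0$, whence $a_\infty\in A\cap C$ since $C$ is closed. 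The engine is a one-step lemma: given $a\in A$ with $\delta=d(a,C)>0$ and a nearest point $q\in C$ (which exists because admissible sets are proximinal in the hyperconvex space $X$), setting $s=\min(\delta,r/2)$ and $m=\sigma_{aq}(s/\delta)$, convexity of the bicombing gives $d(m,a)=s\le r$, $d(m,C)\le\delta-s$, and $m\in\bigcap_i B(x_i,r_i+\delta-s)$ (using $d(a,x_i)\le\delta+r_i$). The lemma then produces $a'\in A\cap B(a,r)$ with $d(a',C)\le\delta-s+\eta$ for a prescribed tolerance $\eta>0$. Choosing $\eta_k$ with $\sum_k\eta_k<\infty$ forces $\delta_k\to0$; since each step moves $a_k$ by at most $r$ while $\delta_k$ first decreases by roughly $r/2$ (finitely many large steps) and then geometrically, the series of step lengths converges.

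The main obstacle is precisely the construction of $a'$ in the one-step lemma, because local external hyperconvexity in $B(a,r)$ only admits balls centered inside $B(a,r)$, whereas the constraints $B(x_i,r_i+\delta-s)$ may be centered far from $a$. Here I would use the convex bicombing to re-center each constraint: replacing $x_i$ by $x_i''=\sigma_{ax_i}(t_i)$ at distance $\min(r/2,d(a,x_i))$ from $a$ and shrinking the radius accordingly, straightness guarantees that $B(x_i'',\rho_i'')$ captures $B(x_i,r_i+\delta-s)\cap B(a,r/2)$. The re-centered family together with $B(a,r)$ and $B(m,s)$ all contain a point near $m$, so the pairwise and center-to-$A$ hypotheses of external hyperconvexity in $B(a,r)$ are met, and the common point is the desired $a'$; by the neighborhood identity $\bigcap_i B(x_i,\rho_i+\eta)=N_\eta(C)$ valid in hyperconvex spaces (adjoin $B(p,\eta)$ and use the pairwise condition), $a'$ lies within $\delta-s+\eta$ of $C$. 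Getting these convexity estimates clean, with the correct bookkeeping of radii and tolerances, is the technical core.

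Finally, for part (II) I would adapt the same scheme to weak external hyperconvexity. Uniqueness of straight lines lets me identify any consistent bicombing on a $\sigma$-convex $A$ with $\sigma|_A$: in the direction (i)$\Rightarrow$(ii) this yields that $A$ is $\sigma$-convex, while in (ii)$\Rightarrow$(i) the restriction $\sigma|_A$ is a consistent bicombing on $A$, consistency and convexity being inherited from $\sigma$. For the global weak external hyperconvexity I would rerun the descent, but now each admissible family involves at most the one prescribed extra center $x$ beyond $A$, so the local step uses weak external hyperconvexity of $A\cap B(a,r)$ in $B(a,r)$ and the re-centering need only track $x$; since a single external center is present the pairwise bookkeeping is lighter, and the same completeness argument closes the proof.
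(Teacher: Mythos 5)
Your reduction of (i)$\Rightarrow$(ii) to the convexity propositions plus a trace lemma (adjoin $B(x,r)$ to the family) is correct and matches the paper. The genuine gap is in the one-step lemma driving your descent for (ii)$\Rightarrow$(i). You need $a'\in A\cap B(a,2s)\cap\bigcap_i B(x_i,r_i+\delta-s+\eta)$, and you propose to get it from external hyperconvexity of $A\cap B(a,r)$ in $B(a,r)$ after re-centering each constraint to $x_i''=\sigma_{ax_i}(t_i)$ near $a$. This re-centering cannot meet both requirements at once. To conclude that $a'$ lands in $B(x_i,r_i+\delta-s+\eta)$ you need (something like) $d(x_i,x_i'')+\rho_i''\le r_i+\delta-s+\eta$; for a far-away center (say $d(a,x_i)=r_i+\delta$ with $r_i$ large and $s=r/2$) this forces $\rho_i''$ down to roughly $d(a,x_i'')-s+\eta$, while admissibility of the re-centered ball requires $d(x_i'',A\cap B(a,r))\le\rho_i''$ --- and the only point of $A\cap B(a,r)$ you control is $a$ itself, at distance $d(a,x_i'')$ from $x_i''$. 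Lemma~\ref{Lem:DinstanceToConvexSet} only gives $d(x_i'',A)\le t_i r_i$, which need not be small and whose nearest point need not lie in $B(a,r)$. If instead ``captures'' means $B(x_i'',\rho_i'')\supseteq B(x_i,r_i+\delta-s)\cap B(a,r/2)$, the containment points the wrong way and membership in $B(x_i'',\rho_i'')$ tells you nothing about $d(a',x_i)$. Either way the decrease of $\delta_k$ is not established.

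The paper resolves exactly this obstacle by a different mechanism, and the difference is essential. Proposition~\ref{Prop:LocallyExternallyHyperconvexSigmaConvex} runs a double induction: an outer bootstrap $\mathsf{P}(R)\Rightarrow\mathsf{P}(\tfrac54 R)$ on the size of the neighborhood of $A$ in which the external hyperconvexity of traces is already known, and an inner induction $\mathsf{P}'(R')\Rightarrow\mathsf{P}'(2R')$ on the radii, in which \emph{every pair} of centers is replaced by its midpoint $y_{ij}=\sigma_{x_ix_j}(\tfrac12)$. Pairwise midpoints halve all radii and all mutual distances simultaneously, so the entire system of admissibility inequalities is preserved ($d(y_{ij},y_{ik})\le\tfrac{r_j+r_k}{2}$ and $d(y_{ij},A')\le\tfrac{r_i+r_j}{2}$, the latter via Lemma~\ref{Lem:DinstanceToConvexSet} and $\sigma$-convexity of $A$); this is precisely what recentring each ball toward the single point $a$ destroys. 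The paper also needs Lemma~\ref{Lem:ProximinalInNbhd} to guarantee $A\cap B(z,\tfrac54R)\neq\emptyset$ and Lemma~\ref{Lem:BoundedExternallyHyperconvex} together with Proposition~\ref{Prop:IntersectionExternallyHyperconvex} to pass from balls to the global statement. Your part (II) inherits the same structural gap; in addition, you should note explicitly (as the paper does) that uniqueness of straight lines forces $\sigma$ itself to be consistent before $\sigma|_A$ can serve as the required consistent bicombing on $A$.
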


Theorem~\ref{Thm:BicombingTheorem} shows that there are substantial differences between hyperconvex subsets and externally or weakly externally subsets of a metric space, since hyperconvex subsets are \emph{not} convex in general.

Observe that Theorems~\ref{Thm:FourToFinite} and \ref{Thm:BicombingTheorem} can also be combined. Let for example $X$ be a proper $4$-hyperconvex metric space and let $A \subset X$ be any subset. Then, $X$ is hyperconvex by Theorem~\ref{Thm:FourToFinite} and $X$ admits a convex geodesic bicombing by \cite[Proposition~3.8]{Lan} and \cite[Theorem~1.1]{DesL}. Therefore, Theorem~\ref{Thm:BicombingTheorem} applies.

As another application of Theorems~\ref{Thm:FourToFinite} and \ref{Thm:BicombingTheorem}, consider now the simple case of the $n$-dimensional normed space $l^n_{\infty}:= (\R^n, \| \cdot\|_{\infty})$, where $\| \cdot\|_{\infty}$ denotes the maximum norm. If $A$ is a hyperconvex subset of $l^n_{\infty}$, then $A$ does not need to be convex. However, as soon as $A$ is weakly externally 4-hyperconvex in $l^n_{\infty}$, it follows by Theorem~\ref{Thm:FourToFinite} that $A$ is weakly externally hyperconvex and thus $A$ must be convex by Theorem~\ref{Thm:BicombingTheorem}. 

This leads to the following Helly-type result. A family of sets is a \emph{Helly family of order $k$} if every finite subfamily such that every $k$-fold intersection is non-empty has non-empty total intersection. For instance, if $X$ is a 4-hyperconvex metric space, then the set $\mathcal{E}_2(X)$ of externally 2-hyperconvex subsets is a Helly family of order $2$.

\begin{Thm}\label{Thm:HellyWEH}
The collection $\mathcal{W}(l^n_{\infty})$ of weakly externally hyperconvex subsets of $l_\infty^n$ is a Helly family of order $n+1$. Moreover, the order is optimal.
\end{Thm}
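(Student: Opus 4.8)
The plan is to treat the two assertions separately, reducing the upper bound to the classical Helly theorem. As recalled in the introduction, $l^n_\infty$ is hyperconvex, its straight lines are unique, and its unique convex geodesic bicombing is the linear one, so that $\sigma$-convexity coincides with ordinary convexity; hence (via Theorems~\ref{Thm:FourToFinite} and~\ref{Thm:BicombingTheorem}) every member of $\mathcal{W}(l^n_\infty)$ is a convex subset of $\R^n$. Since the finite form of Helly's theorem holds for arbitrary, not necessarily compact, convex sets, the convex subsets of $\R^n$ form a Helly family of order $n+1$; as any subcollection of a Helly family of order $k$ is again one of order $k$, the family $\mathcal{W}(l^n_\infty)$ is a Helly family of order $n+1$.

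For optimality I would produce $n+1$ members of $\mathcal{W}(l^n_\infty)$ with empty total intersection but with every $n$-fold intersection non-empty, which rules out order $n$. Setting $x_0:=0$, I use the negative cycle $x_0\to x_1\to\dots\to x_n\to x_0$ of a difference-constraint system: let $A_1=\{x:x_1\le-1\}$, $A_k=\{x:x_k-x_{k-1}\le-1\}$ for $2\le k\le n$, and $A_{n+1}=\{x:x_n\ge1\}$. Summing the inequalities $x_1\le-1$, $x_k-x_{k-1}\le-1$ and $-x_n\le-1$ around the cycle yields $0\le-(n+1)$, so $\bigcap_{i=1}^{n+1}A_i=\es$; deleting any single constraint leaves an acyclic, hence feasible, difference system, so every $n$-fold intersection is non-empty (explicit witnesses are immediate).

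The main obstacle is to verify that each $A_i$ belongs to $\mathcal{W}(l^n_\infty)$. The coordinate half-spaces $A_1$ and $A_{n+1}$ are boxes, hence externally and a fortiori weakly externally hyperconvex. The difference half-spaces $A_k=\{x_k-x_{k-1}\le-1\}$ are the crux, and the choice of normal $e_k-e_{k-1}$ is essential: a truly diagonal half-space such as $\{\,\sum_i x_i\ge1\,\}$ fails to be weakly externally hyperconvex as soon as $n\ge3$, since the ball forced by an exterior point meets it in a single point that a second admissible ball can miss. I would establish $A_k\in\mathcal{W}(l^n_\infty)$ in three steps. First, the planar half-space $H=\{(u,v):v\le u\}\subset l^2_\infty$ is gated: its nearest-point map sends $(u,v)$ with $v>u$ to $\left(\tfrac{u+v}{2},\tfrac{u+v}{2}\right)$, and a short case check gives $\|p-q\|_\infty=\|p-g(p)\|_\infty+\|g(p)-q\|_\infty$ for every $q\in H$. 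Second, a gated subset of a hyperconvex space is itself hyperconvex and, using the gate identity to lower the radii, is weakly externally hyperconvex, so $H\in\mathcal{W}(l^2_\infty)$. Third, writing $l^n_\infty=l^2_\infty\times l^{n-2}_\infty$ along the coordinates $(x_{k-1},x_k)$ and the rest, one has $A_k=H\times l^{n-2}_\infty$, and weak external hyperconvexity of a factor survives multiplication by a hyperconvex space because the maximum metric decouples the ball-intersection problem coordinatewise. The technical heart is the second step, together with this product stability; the pitfall to avoid is expecting $A_k$ itself to be gated in $l^n_\infty$, which already fails for $n\ge3$ on account of the free coordinates.
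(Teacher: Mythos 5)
Your proof is correct, and the upper bound is obtained exactly as in the paper: Proposition~\ref{Prop:convexity of WEH} (equivalently, the route through Theorems~\ref{Thm:FourToFinite} and~\ref{Thm:BicombingTheorem}, whose hypotheses hold since $l^n_\infty$ is proper of finite combinatorial dimension) shows every member of $\mathcal{W}(l^n_\infty)$ is convex, and the finite Helly theorem for arbitrary convex sets in $\R^n$ does the rest. For optimality the paper uses a different but morally equivalent family of half-spaces, namely $\{x_n\ge 0\}$, $\{x_{k}-x_{k+1}\ge 0\}$ and $\{x_1+x_2\le -1\}$, with explicit witness points for each $n$-fold intersection; your negative-cycle system $\{x_1\le -1\}$, $\{x_k-x_{k-1}\le -1\}$, $\{x_n\ge 1\}$ serves the same purpose and has the small advantage of avoiding the normal $e_1+e_2$, so only coordinate and difference half-spaces need to be certified. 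Where you go beyond the paper is in actually verifying that these half-spaces lie in $\mathcal{W}(l^n_\infty)$ --- the paper asserts this without proof (it is implicit in the cell structure of injective hulls, cf.\ Remark~\ref{Rem:LinearOnCells} and \cite{MieP}). Your three-step verification is sound: the gate identity for $H=\{v\le u\}\subset l^2_\infty$ checks out by the case analysis you indicate; a gated subset is a proximinal $1$-Lipschitz retract via the gate map and hence weakly externally hyperconvex by Esp\'inola's characterization (the Proposition quoted at the start of Section~\ref{Sec:Retracts}), which is a cleaner way to phrase your second step; and the product stability holds because balls in the $\ell_\infty$-product split as products of balls, so the intersection problem decouples into a weakly external one in the $H$-factor and a plain hyperconvexity one in the complementary factor. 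In short: same skeleton as the paper, a variant example for sharpness, plus a justification the paper leaves to the reader.
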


Finally, we give a further characterization of externally hyperconvex subsets in terms of retractions.


\section{Finite hyperconvexity} \label{Sec:Finite}


In this section we will have a closer look at $n$-hyperconvex metric spaces and will eventually prove Theorems~\ref{Thm:AlmostAndComplete} and \ref{Thm:FourToFinite}.

We denote the collection of all $n$-hyperconvex, externally $n$-hyperconvex and weakly externally $n$-hyperconvex subsets of $X$ by $\mathcal{H}_n(X)$, $\mathcal{E}_n(X)$ and $\mathcal{W}_n(X)$, respectively. It holds that $\mathcal{E}_n(X) \subset \mathcal{W}_n(X) \subset \mathcal{H}_n(X)$ and $\mathcal{H}_{n+1}(X) \subset \mathcal{H}_n(X)$.

\begin{Rmk*}
	Obviously, every metric space is $1$-hyperconvex. A subset $A$ of $X$ is (weakly) externally $1$-hyperconvex if and only if it is \emph{proximinal}, i.e. for every $x \in X$ there is some $a \in A$ with $d(x,a) = d(x,A)$. Recall that proximinal subsets are closed.
	Furthermore, a metric space $X$ is $2$-hyperconvex if and only if it is \emph{metrically convex}, i.e. for every $x_0,x_1 \in X$ and every $t \in [0,1]$ there is some $x_t \in X$ with $d(x_0,x_t) = t d(x_0,x_1)$ and $d(x_t,x_1) = (1-t) d(x_0,x_1)$, compare \cite[Definition~1.3]{BenL}.
\end{Rmk*}

	For two points $x,y$ in a metric space $(X,d)$, the set $$I(x,y) := \{ z \in X : d(x,y) = d(x,z) + d(z,y) \}$$ is called the \emph{metric interval} between $x$ and $y$.
	
	A metric space $(X,d)$ is called \emph{modular} if for all $x,y,z \in X$ the median set $$M(x,y,z) := I(x,y) \cap I(y,z) \cap I(z,x)$$ is non-empty.

\begin{Prop}\label{Prop:3-hyperconvex}
	A metric space $X$ is 3-hyperconvex if and only if it is metrically convex and modular.
\end{Prop}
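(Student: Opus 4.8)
The plan is to prove the two implications separately and directly from the definitions. The direction ``$3$-hyperconvex $\Rightarrow$ metrically convex and modular'' is mostly a matter of feeding cleverly chosen balls into the hypothesis, while the converse requires an actual construction of an intersection point and is where both assumptions are genuinely used.

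For the forward direction, I would first observe that a $3$-hyperconvex space is $2$-hyperconvex: given an admissible pair $B(x_1,r_1),B(x_2,r_2)$ (that is, $d(x_1,x_2)\leq r_1+r_2$), one simply duplicates one ball to get an admissible triple with the same intersection, so metric convexity follows from the Remark. For modularity, fix $x,y,z$ and set $a=d(y,z)$, $b=d(z,x)$, $c=d(x,y)$. The key choice is the radii $r_x=(b+c-a)/2$, $r_y=(c+a-b)/2$, $r_z=(a+b-c)/2$, which are non-negative by the triangle inequality and satisfy $d(x,y)=r_x+r_y$ together with the two analogous equalities. Applying $3$-hyperconvexity produces a point $m$ in the intersection of the three balls, and then the inequalities $d(x,m)+d(m,y)\leq r_x+r_y=d(x,y)\leq d(x,m)+d(m,y)$ are forced to be equalities, so $m\in I(x,y)$; the same reasoning for the other two pairs gives $m\in M(x,y,z)$, whence $X$ is modular.

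For the converse, assume $X$ is metrically convex and modular and let $B(x_1,r_1),B(x_2,r_2),B(x_3,r_3)$ be admissible. Using modularity, pick a median $m\in M(x_1,x_2,x_3)$ and write $p_i=d(x_i,m)$, so that $p_i+p_j=d(x_i,x_j)$ for every pair. If $p_i\leq r_i$ for all $i$, then $m$ already lies in all three balls. Otherwise choose an index, say $i=1$, with $p_1>r_1$, and use metric convexity on the pair $(m,x_1)$ with parameter $t=(p_1-r_1)/p_1\in[0,1]$ to obtain a point $w$ with $d(m,w)=p_1-r_1$ and $d(w,x_1)=r_1$. Then $w\in B(x_1,r_1)$ by construction, and for $j\in\{2,3\}$ the triangle inequality together with the median identity yields $d(x_j,w)\leq p_j+(p_1-r_1)=d(x_1,x_j)-r_1\leq r_j$, so $w$ lies in all three balls.

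The main obstacle, and the point where modularity and metric convexity both enter, is this converse step: a priori one fears that correcting the median toward $x_1$ destroys membership in the other two balls and that one would have to adjust toward all three vertices simultaneously. The crucial observation is that a \emph{single} geodesic correction suffices, precisely because the median identity $p_1+p_j=d(x_1,x_j)$ turns the admissibility inequality $d(x_1,x_j)\leq r_1+r_j$ directly into the required bound $d(x_j,w)\leq r_j$. In writing this up I would verify the degenerate cases (for instance $r_1=0$, where $w=x_1$) and confirm that the parameter $t$ is well defined whenever $p_1>r_1\geq 0$.
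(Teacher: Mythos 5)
Your proposal is correct and follows essentially the same route as the paper: the forward direction uses the Gromov products as radii so that a point of the triple intersection is a median, and the converse takes a median point and uses metric convexity to push it toward the center of the (necessarily unique) ball it misses, with admissibility and the median identities guaranteeing the corrected point stays in the other two balls. Your write-up is, if anything, slightly more explicit than the paper's about the $2$-hyperconvexity step and the degenerate cases.
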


\begin{proof}
	First assume that $X$ is 3-hyperconvex. Recall the Gromov product $(y|z)_x := \frac{1}{2}(d(x,y) + d(x,z) - d(y,z))$ for $x,y,z \in X$. Let $x_1,x_2,x_3 \in X$ and define $r_i := (x_j | x_k)_i$ for $\{i,j,k\} = \{1,2,3\}$. Then we have $$M(x_1,x_2,x_3) = \bigcap_{i=1}^3 B(x_i,r_i) \neq \emptyset.$$
	
	Conversely, let $X$ be metrically convex and modular. Let $x_1,x_2,x_3 \in X$ and $r_1,r_2,r_3 \in \mathbb{R}$ with $d(x_i,x_j) \leq r_i + r_j$. If $M(x_1,x_2,x_3) \cap \bigcap_{i=1}^3 B(x_i,r_i) = \emptyset$, we might assume without loss of generality that $r_3 < (x_1|x_2)_{x_3}$. Take $m \in M(x_1,x_2,x_3)$ and define $r_m = \min \{ r_1 - d(x_1,m), r_2 - d(x_2,m)\}$. Since $X$ is metrically convex we get $$\emptyset \neq B(m,r_m) \cap B(x_3,r_3) \subset \bigcap_{i=1}^3 B(x_i,r_i).$$	
\end{proof}

	As a first step, we now turn our attention to almost $n$-hyperconvex metric spaces and its subsets.
	A subset $A$ of a metric space $X$ is called \emph{almost externally $n$-hyperconvex} in $X$ if for every family $\{ B(x_i,r_i) \}_{i=1}^n$ of closed balls with $d(x_i,x_j) \leq r_i + r_j$ and $d(x_i,A) \leq r_i$ we have $$\bigcap_{i=1}^n B(x_i,r_i + \epsilon) \cap A \neq \emptyset,$$ for every $\epsilon > 0$. We denote the set of all closed, almost externally $n$-hyperconvex subsets of $X$ by $\mathcal{\tilde{E}}_n(X)$.
	
	Similarly, we say that a subset $A$ of a metric space $X$ is \emph{almost weakly externally $n$-hyperconvex} in $X$ if it is almost externally $n$-hyperconvex in $A \cup \{x\}$ for every $x \in X$. We denote all such subsets which are closed by $\mathcal{\tilde{W}}_n(X)$.

Aronszajn and Panitchpakdi already showed that every complete, almost $(n+1)$-hyperconvex metric space is $n$-hyperconvex \cite[Theorem 3.4]{AroP}. This result easily extends to almost (weakly) externally hyperconvex subsets.

\begin{Lem}\label{Lem:almost externally (n+1)-hyperconvex}
	Let $X$ be a complete metric space. Then we have
\begin{enumerate}[(i)]
	\item $A \in \mathcal{\tilde{E}}_{n+1}(X) \Rightarrow A \in \mathcal{E}_n(X)$,
	\item $A \in \mathcal{\tilde{W}}_{n+1}(X) \Rightarrow A \in \mathcal{W}_n(X)$, and
	\item if $X$ is almost $(n+1)$-hyperconvex, then $X$ is $n$-hyperconvex.
\end{enumerate} 
\end{Lem}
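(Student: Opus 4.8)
The plan is to prove (i) directly by an iterative approximation argument and then deduce (ii) and (iii) as special cases. For (i), fix balls $B(x_1,r_1),\dots,B(x_n,r_n)$ with $x_i \in X$, $d(x_i,A) \le r_i$ and $d(x_i,x_j) \le r_i + r_j$; the goal is to produce a point of $\bigcap_{i=1}^n B(x_i,r_i) \cap A$. The idea is to build a sequence $(a_k)_{k \ge 0}$ in $A$ with $d(a_k, x_i) \le r_i + \eps_k$ for all $i$, where $(\eps_k)$ is a fixed summable positive sequence (say $\eps_k = 2^{-k}$), in such a way that consecutive terms are close. Completeness of $X$ together with closedness of $A$ will then give a limit $a \in A$, and passing to the limit in $d(a_k,x_i) \le r_i + \eps_k$ yields $d(a,x_i) \le r_i$, as desired.

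The crucial device is an additional, $(n+1)$-th ball. First, applying almost external $(n+1)$-hyperconvexity to the family obtained by duplicating one of the given balls produces a starting point $a_0 \in A$ with $d(a_0,x_i) \le r_i + \eps_0$ (the duplicate imposes no new constraint). For the inductive step, given $a_k \in A$ with $d(a_k, x_i) \le r_i + \eps_k$, I would apply almost external $(n+1)$-hyperconvexity to the enlarged family $B(x_1,r_1),\dots,B(x_n,r_n),B(a_k,\eps_k)$. Its hypotheses hold: $d(a_k,A)=0 \le \eps_k$ since $a_k \in A$, and the new admissibility constraints $d(x_i,a_k) \le r_i + \eps_k$ are exactly the inductive hypothesis. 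Taking the tolerance equal to $\eps_{k+1}$ yields $a_{k+1} \in A$ with $d(a_{k+1},x_i) \le r_i + \eps_{k+1}$ for every $i$ and, from the last ball, $d(a_{k+1},a_k) \le \eps_k + \eps_{k+1}$. Summability of $(\eps_k)$ then makes $(a_k)$ Cauchy, completing the construction.

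Statements (ii) and (iii) reduce to (i). For (iii) one takes $A = X$: the conditions $d(x_i,A) \le r_i$ become vacuous, so almost $(n+1)$-hyperconvexity of $X$ is precisely almost external $(n+1)$-hyperconvexity of $X$ in itself, and $X$ is trivially closed in itself. For (ii), fix $x \in X$ and apply (i) inside the ambient space $A \cup \{x\}$: since $A$ is closed in $X$ and $\{x\}$ is closed, $A \cup \{x\}$ is closed in the complete space $X$, hence itself complete, and $A$ is closed in it. By definition of $\mathcal{\tilde{W}}_{n+1}(X)$, the set $A$ is almost externally $(n+1)$-hyperconvex in $A \cup \{x\}$, so (i) gives external $n$-hyperconvexity of $A$ in $A \cup \{x\}$; since $x$ was arbitrary, $A \in \mathcal{W}_n(X)$.

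The main point to get right is the choice of the localizing ball $B(a_k,\eps_k)$: it must simultaneously be admissible with the fixed balls, which is guaranteed by carrying the bound $d(a_k,x_i) \le r_i + \eps_k$ through the induction, and force $d(a_{k+1},a_k)$ to be summably small, which is guaranteed by shrinking its radius along $\eps_k$. Everything else, namely the existence of the initial point via ball duplication and the limit passage using completeness and closedness, is routine once this bookkeeping is arranged.
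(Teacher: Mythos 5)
Your proposal is correct and follows essentially the same route as the paper: both construct a Cauchy sequence $(a_k)$ in $A$ with $d(a_k,x_i)\le r_i+2^{-k}$ by appending the auxiliary ball $B(a_k,2^{-k})$ to the given family and invoking almost external $(n+1)$-hyperconvexity with shrinking tolerance, then pass to the limit using completeness and closedness of $A$. Your reductions of (ii) and (iii) to (i) are also the ones the paper has in mind, so there is nothing to add.
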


\begin{proof}
	We will prove (i). The statements (ii) and (iii) then easily follow. Let $\{ B(x_i,r_i) \}_{i=1}^n$ be a family of closed balls with $d(x_i,x_j) \leq r_i + r_j$ and $d(x_i,A) \leq r_i$. Starting with $y_1 \in \bigcap_{i=1}^n B(x_i, r_i+ \frac{1}{2}) \cap A$, we construct inductively a sequence $(y_k)_k$ by choosing
$$y_{k+1} \in B(y_k,\tfrac{1}{2^k}+\tfrac{1}{2^{k+1}}) \cap \bigcap_{i=1}^n B(x_i, r_i+ \tfrac{1}{2^{k+1}}) \cap A.$$
Since $d(y_k,y_{k+1}) \leq \frac{1}{2^k}+\frac{1}{2^{k+1}}$ this sequence is Cauchy and converges to some $y \in A$ with $$d(x_i,y) = \lim_{k \to \infty} d(x_i,y_k) \leq \lim_{k \to \infty} r_i + \tfrac{1}{2^k} = r_i.$$
	That is $y \in \bigcap_{i=1}^n B(x_i,r_i) \cap A \neq \emptyset$.
\end{proof}

\begin{Lem}\label{Lem:externally (n-k)-hyperconvex}
	Let $X$ be a complete metric space, $A \in \mathcal{\tilde{E}}_n(X)$ and $k \leq n-2$. Then for every family $\{ B(x_i,r_i) \}_{i=1}^k$ with $d(x_i,x_j) \leq r_i+r_j$ and $d(x_i,A) \leq r_i$, we have $A' := A \cap \bigcap_{i=1}^k B(x_i,r_i) \in \mathcal{\tilde{E}}_{n-k}(X)$.
\end{Lem}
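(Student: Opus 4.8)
The plan is to merge the two ball families into a single family of $n$ balls, apply the almost external $n$-hyperconvexity of $A$ to land approximately in all of them, and then run a completeness/Cauchy-sequence argument in the spirit of Lemma~\ref{Lem:almost externally (n+1)-hyperconvex} that sharpens the approximate memberships in the balls $B(x_i,r_i)$ into exact ones, so that the limit lies in $A'$ while remaining within $s_j+\epsilon$ of the new centers.

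First I would dispose of the routine points. Since $A$ is closed and each $B(x_i,r_i)$ is closed, $A'$ is closed, so it remains to verify almost external $(n-k)$-hyperconvexity. Fix a test family $\{B(y_j,s_j)\}_{j=1}^{n-k}$ with $d(y_j,y_{j'})\le s_j+s_{j'}$ and $d(y_j,A')\le s_j$, and fix $\epsilon>0$; the goal is a point of $A'\cap\bigcap_j B(y_j,s_j+\epsilon)$. Together the $x_i$ and $y_j$ give $n$ balls, and I must check the hypotheses of almost external $n$-hyperconvexity of $A$. The conditions within each family are given, and $d(y_j,A)\le d(y_j,A')\le s_j$ because $A'\subset A$. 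The only cross condition $d(x_i,y_j)\le r_i+s_j$ follows from $A'\subset B(x_i,r_i)$: picking $a\in A'$ with $d(y_j,a)$ arbitrarily close to $d(y_j,A')\le s_j$ yields $d(x_i,y_j)\le d(x_i,a)+d(a,y_j)\le r_i+s_j$ in the limit.

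Next I would fix a null sequence $\delta_m\downarrow 0$ with $\sum_m(\delta_m+\delta_{m+1})\le\epsilon/2$. Applying almost external $n$-hyperconvexity of $A$ to the combined family with a small enough tolerance produces a starting point $z_0\in A$ with $d(x_i,z_0)\le r_i+\delta_0$ and $d(y_j,z_0)\le s_j+\epsilon/2$. Then I iterate: given $z_m\in A$ with $d(x_i,z_m)\le r_i+\delta_m$, I apply almost external $n$-hyperconvexity of $A$ to the $k+1$ balls $\{B(x_i,r_i)\}_{i=1}^k$ and the Cauchy ball $B(z_m,\delta_m)$ — whose hypotheses hold since $z_m\in A$ and $d(x_i,z_m)\le r_i+\delta_m$ — to obtain $z_{m+1}\in A\cap B(z_m,\delta_m+\delta_{m+1})\cap\bigcap_i B(x_i,r_i+\delta_{m+1})$. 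The point is that the $y_j$-balls are dropped during the iteration, so only $k+1\le n-1$ balls are in play. The sequence is Cauchy, its limit $z$ exists by completeness and lies in the closed set $A$, and $d(x_i,z)=\lim_m d(x_i,z_m)\le r_i$ forces $z\in A'$, while $d(y_j,z)\le d(y_j,z_0)+\sum_m d(z_m,z_{m+1})\le s_j+\epsilon$, as required.

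The crux — and the only place real care is needed — is this budgeting of balls. A single iteration that retained all $n-k$ constraint balls $B(y_j,s_j+\epsilon)$ while tightening the $k$ balls $B(x_i,r_i)$ would also need a Cauchy ball, i.e.\ $n+1$ balls, one beyond what the almost external $n$-hyperconvexity of $A$ can handle. \emph{Decoupling} the two tasks — one application of all $n$ balls to get within $s_j+\epsilon/2$, followed by a $y_j$-free tightening that controls the $y_j$-distances through a small total displacement — is what keeps every application within budget; the hypothesis $k\le n-2$ both makes $A'\in\mathcal{\tilde{E}}_{n-k}(X)$ meaningful ($n-k\ge 2$) and leaves room to spare. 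One small preliminary worth recording is that almost external $n$-hyperconvexity passes to families of fewer than $n$ balls by padding with a repeated ball, which legitimizes applying it to the $k+1$ balls above.
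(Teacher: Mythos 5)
Your proof is correct and follows essentially the same route as the paper: one application of the almost external $n$-hyperconvexity to the merged family of $n$ balls, followed by a tightening step involving only the $k$ balls $B(x_i,r_i)$ plus one small ball around the approximate point, which is exactly where the hypothesis $k\le n-2$ is used. The only difference is that you unroll the Cauchy iteration by hand, whereas the paper performs the tightening in one stroke by citing Lemma~\ref{Lem:almost externally (n+1)-hyperconvex}(i), which already encapsulates that iteration.
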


\begin{proof}
	Let $\{ B(x_i,r_i) \}_{i=k+1}^n$ be a family of closed balls with $d(x_i,x_j) \leq r_i + r_j$ and $d(x_i,A') \leq r_i$. Fix some $\epsilon > 0$. Then there is some $y \in A \cap \bigcap_{i=1}^n B(x_i,r_i + \tfrac{\epsilon}{2})$ and by Lemma~\ref{Lem:almost externally (n+1)-hyperconvex}(i), we get $$\emptyset \neq A \cap \bigcap_{i=1}^k B(x_i,r_i) \cap B(y,\tfrac{\epsilon}{2}) \subset A' \cap \bigcap_{i=k+1}^n B(x_i,r_i + \epsilon).$$
\end{proof}

\begin{Cor}\label{Cor:externally (n-k)-hyperconvex}
	Let $X$ be a complete, almost $n$-hyperconvex metric space and let $k \leq n-2$. Then for every family $\{ B(x_i,r_i) \}_{i=1}^k$ with $d(x_i,x_j) \leq r_i+r_j$, we have $B := \bigcap_{i=1}^n B(x_i,r_i) \in \mathcal{\tilde{E}}_{n-k}(X)$.
\end{Cor}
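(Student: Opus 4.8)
The plan is to obtain this statement as the special case $A = X$ of Lemma~\ref{Lem:externally (n-k)-hyperconvex}. First I would record the elementary observation that, for the full space, being almost $n$-hyperconvex is the same as belonging to $\mathcal{\tilde{E}}_n(X)$: when $A = X$ the external distance condition $d(x_i,A) = d(x_i,X) = 0 \leq r_i$ holds automatically for every center $x_i$, the intersections $\bigcap_i B(x_i,r_i+\epsilon) \cap X$ reduce to $\bigcap_i B(x_i,r_i+\epsilon)$, and $X$ is trivially closed in itself. Hence the defining condition of $\mathcal{\tilde{E}}_n(X)$ collapses to precisely the definition of almost $n$-hyperconvexity, so that $X \in \mathcal{\tilde{E}}_n(X)$.

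With this identification in hand, I would simply invoke Lemma~\ref{Lem:externally (n-k)-hyperconvex} with $A = X$ and the same range $k \leq n-2$. The hypotheses $d(x_i,x_j) \leq r_i + r_j$ are given, and the remaining hypothesis $d(x_i,A) = 0 \leq r_i$ is automatic, so the lemma applies verbatim and yields
\[
	B = \bigcap_{i=1}^k B(x_i,r_i) = X \cap \bigcap_{i=1}^k B(x_i,r_i) \in \mathcal{\tilde{E}}_{n-k}(X),
\]
which is exactly the assertion (reading the intersection in the corollary as running over $i = 1,\dots,k$, in accordance with the stated family of balls).

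There is essentially no obstacle here: the corollary is a direct specialization of the preceding lemma. The only point requiring a moment of care is the equivalence between almost $n$-hyperconvexity of $X$ and membership $X \in \mathcal{\tilde{E}}_n(X)$, namely verifying that the external hypotheses built into $\mathcal{\tilde{E}}_n(X)$ become vacuous when $A = X$. I expect the entire argument to amount to a single short paragraph.
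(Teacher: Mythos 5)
Your proof is correct and is exactly the intended argument: the paper states this corollary without proof as an immediate specialization of Lemma~\ref{Lem:externally (n-k)-hyperconvex} to $A = X$, using precisely the observation that the external-distance hypotheses become vacuous and that almost $n$-hyperconvexity of $X$ is the same as $X \in \mathcal{\tilde{E}}_n(X)$. You also correctly identified that the upper index $n$ in the displayed intersection is a typo for $k$.
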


By investigating the proofs of Lemma~4.3 and Lemma~4.4 in \cite{Mie}, we see that the requirements there can be weakened as follows.

\begin{Lem}\label{Lem:almost 3-hyperconvex}
	Let $X$ be a complete, almost 3-hyperconvex metric space. Let $A, A' \in \mathcal{\tilde{E}}_2(X)$ with $y \in A\cap A'\neq \emptyset$ and $x \in X$ with $d(x,A),d(x,A') \leq r$. Denote $d:=d(x,y)$ and $s:=d-r$.
	Then for every $\epsilon > 0$ and every $\delta > 0$, we have $$A\cap A' \cap B(x,r + \delta) \cap B(y,s+\epsilon) \neq \emptyset,$$ given $s\geq 0$.
	In any case, the intersection $A\cap A' \cap B(x,r+ \delta)$ is non-empty.
\end{Lem}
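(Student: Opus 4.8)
The plan is to dispose of the soft cases first and then to construct the required point of $A\cap A'$ by an $\epsilon$-approximate iteration that converges by completeness. The two easy observations are: if $s<0$, then $d(x,y)=d=r+s<r+\delta$ and $y\in A\cap A'$, so $y$ already witnesses the ``in any case'' claim; and if $s\geq 0$, then the first assertion (with the ball $B(y,s+\epsilon)$) already implies the second, since any point it produces lies in $B(x,r+\delta)$. Thus everything reduces to the following: assuming $s\geq 0$ and given $\epsilon,\delta>0$, produce a point of $A\cap A'$ in $B(x,r+\delta)\cap B(y,s+\epsilon)$. For later use I would also record that, by Lemma~\ref{Lem:almost externally (n+1)-hyperconvex}(iii), $X$ is $3$-hyperconvex, hence by Proposition~\ref{Prop:3-hyperconvex} metrically convex and modular; so geodesics and medians $M(\cdot,\cdot,\cdot)$ are available.

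The first concrete step is a single-set approximation. For every $\eta>0$, almost external $2$-hyperconvexity of $A$ applied to the pair $B(x,r),B(y,s)$ yields a point $a\in A$ with $d(x,a)\leq r+\eta$ and $d(y,a)\leq s+\eta$: the hypotheses hold because $d(x,y)=r+s$, $d(x,A)\leq r$, and $d(y,A)=0\leq s$. Symmetrically one gets $a'\in A'$ with the same estimates. Thus each of $A$ and $A'$ carries points arbitrarily close to the ``level-$r$ slice'' of the metric interval $I(x,y)$, and crucially $d(a,a')\leq d(a,y)+d(y,a')\leq 2s+2\eta$, so the shared point $y$ bounds the initial gap by $2s$.

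The heart of the argument is then the \emph{merge}: turning the two separate approximants into a single point lying in both sets. Starting from $z_0=y\in A\cap A'$ I would build a sequence $z_0,z_1,z_2,\dots$ lying alternately in $A$ and in $A'$, where $z_{k+1}$ is obtained from almost external $2$-hyperconvexity of the appropriate set applied to the two balls $B(x,\rho_{k+1})$, with radii $\rho_{k+1}\downarrow r$, and an anchor ball $B(z_k,\tau_k)$ tethering $z_{k+1}$ to the previous point. To keep the approximants on $I(x,y)$ and to control the anchor, I would replace the crude anchor by a median interpolation: passing to $m=M(x,a,a')$ (resp.\ $n=M(y,a,a')$), which by a short computation lies on $I(x,y)$ and within $\tfrac12 d(a,a')$ of both $A$ and $A'$, and re-projecting into each set. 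With the tolerances $\eta_k$ chosen summable, completeness produces a limit $z$; since the $z_k$ lie alternately in the \emph{closed} sets $A$ and $A'$, one gets $z\in A\cap A'$, the radius schedule gives $d(x,z)\leq r+\delta$, and the total displacement $\sum_k d(z_k,z_{k+1})$ bounds $d(y,z)$ by $s+\epsilon$.

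The main obstacle is precisely the convergence of this scheme, i.e.\ forcing a \emph{single} Cauchy sequence rather than two limits that fail to coincide. The difficulty is genuine: ball-intersection (or midpoint/median) reprojection only ever yields $d(a_1,a_1')\leq d(a,a')$, never a strict contraction, so the anchor radii $\tau_k$ are a priori only bounded by the previous increment and need not be summable. Overcoming this is exactly where both hypotheses enter: the common point $y$ bounds the gap by $2s$ and pins down the interval $I(x,y)$ along which the approximants travel, while the almost $3$-hyperconvexity of $X$ (equivalently, modularity via Proposition~\ref{Prop:3-hyperconvex}) supplies the medians needed to keep the running approximants on that interval and to make the increments telescope against the decrease of $d(x,\cdot)$ toward $r$. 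This is the step that mirrors Lemmas~4.3 and~4.4 of \cite{Mie}, and it is where essentially all the work lies; once it is in place, the stated conclusions, including the ``in any case'' part, follow as above.
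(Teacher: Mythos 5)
Your reduction to the case $s\ge 0$ and your single-set approximation step are both correct, and you have correctly located the difficulty: producing one convergent sequence rather than two approximants whose gap never contracts. But you then explicitly defer that step (``it is where essentially all the work lies'') without supplying it, and the mechanisms you sketch for it do not work: as you yourself observe, a single alternating walk from $y$ towards the slice at distance $r$ from $x$ has increments summing to about $s$ (not summable), and median/ball reprojection only gives $d(a_1,a_1')\le d(a,a')$ with no strict contraction. (Also, $M(x,a,a')$ need not lie on $I(x,y)$.) So the proof is incomplete precisely at its essential point.

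The missing idea in the paper is a \emph{two-level} construction. The inner level (Step~I) is a \emph{finite} zig-zag walk: starting at $y$ and alternating between $A$ and $A'$, each step uses almost external $2$-hyperconvexity to move a distance $\approx\tilde\epsilon$ from the previous point while decreasing the distance to $x$ by $\tilde\epsilon$; after $\lfloor s/\tilde\epsilon\rfloor+2$ steps one lands a \emph{pair} $a\in A$, $a'\in A'$ in $B(x,r+\delta)$ with $d(a,a')\le\epsilon$ and $d(y,a)\le s+\epsilon$. The point is that the terminal gap can be made arbitrarily small at the cost of \emph{more} steps, not a larger radius, because the per-step radius slack is budgeted as a geometric series $\sum_i\delta/2^i\le\delta$. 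The outer level (Step~II) then iterates: given a pair with gap $\le\epsilon/2^{n-1}$ at radius $\le r+\Delta_{n-1}$, almost $3$-hyperconvexity produces a new center $x_n$ within $\approx\epsilon/2^n$ of both points and pulled back towards $x$ by $\epsilon/2^n$; re-running the inner walk from $x_n$ with anchor $y$ and target gap $\epsilon/2^n$ yields the next pair, with $d(a_{n-1},a_n)\le(2\epsilon+\delta)/2^n$. These increments are summable, the two sequences are Cauchy with a common limit, and that limit lies in $A\cap A'\cap B(x,r+\delta)\cap B(y,s+\epsilon)$. It is this nesting of a finite walk with prescribed terminal gap inside a geometrically contracting outer iteration --- not a single walk and not median reprojection --- that closes the gap you identified.
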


\begin{proof}
	We will split the proof into three steps.
\\ \mbox{} \\
\noindent \textbf{Step I.} \emph{For all $\epsilon > 0$ and $\delta > 0$, there are $a \in C_\delta := B(x,r+\delta) \cap A$ and $a'\in C_\delta' := B(x,r+\delta) \cap A'$ such that $d(a,a') \leq \epsilon$ and $d(y,a) \leq s + \epsilon$.}
\\

	Let $0 \leq \tilde{\epsilon} \leq \frac{\epsilon}{2}$, $n_0 := \left\lfloor \frac{s}{\tilde{\epsilon}} \right\rfloor$ and $0 < \delta \leq \frac{\tilde{\epsilon}}{n_0+1}$, we start by choosing
	\begin{align*}
		a_1 &\in B(y, \tilde{\epsilon} + \tfrac{\delta}{2}) \cap B(x,d-\tilde{\epsilon} + \tfrac{\delta}{2}) \cap A, \\
		a_2 &\in B(a_1, \tilde{\epsilon} + \tfrac{\delta}{2} + \tfrac{\delta}{4}) \cap B(x,d-2\tilde{\epsilon} + \tfrac{\delta}{2} + \tfrac{\delta}{4} ) \cap A'. 
	\end{align*}
	Then, as long as $n \leq n_0$, we can inductively pick $$a_n \in B(a_{n-1},\tilde{\epsilon} + \Delta_n) \cap B(x,d-n\tilde{\epsilon} + \Delta_n ) \cap A,$$ if $n$ is odd and $$a_n \in B(a_{n-1},\tilde{\epsilon} + \Delta_n) \cap B(x,d-n\tilde{\epsilon} + \Delta_n ) \cap A',$$ if $n$ is even, where $\Delta_n := \sum_{i=1}^n \frac{\delta}{2^i} \leq \delta$. Observe that we have $d(y,a_n) \leq n (\tilde{\epsilon} + \delta)$.
	 Finally, assuming without loss of generality $a_{n_0} \in A'$ (otherwise interchange $A$ and $A'$), there are $$a \in B(a_{n_0},\tilde{\epsilon} + \Delta_{n_0+1}) \cap B(x,r + \Delta_{n_0+1} ) \cap A \subset C_\delta$$ and $$a' \in B(a,\tilde{\epsilon} + \Delta_{n_0+2}) \cap B(x,r + \Delta_{n_0+2} ) \cap A' \subset C_\delta'.$$ We have $$d(a,a') \leq \tilde{\epsilon} + \delta \leq \epsilon$$ and $$d(a,y)\leq (n_0+1) ( \tilde{\epsilon} + \delta) \leq s + \tilde{\epsilon} + (n_0+1) \delta \leq s + \epsilon.$$ This finishes Step I.
\\ \mbox{} \\
\noindent \textbf{Step II.} \emph{For all $\epsilon>0$, $\delta>0$ and all $n \geq 1$, there are $a_n \in A$ and $a_n' \in A'$ with 
\begin{itemize}
	\item $d(a_n,a_n') \leq \frac{\epsilon}{2^n}$,
	\item $d(a_{n-1},a_n), d(a_{n-1}',a_n') \leq \frac{2\epsilon+\delta}{2^n}$,
	\item $d(x,a_n),d(x,a_n') \leq r + \Delta_n$, where $\Delta_n := \sum_{i=1}^n \frac{\delta}{2^i}$, and
	\item $d(y,a_n),d(y,a_n') \leq s + \mathrm{E}_n$, where $\mathrm{E}_n := \sum_{i=1}^n \frac{\epsilon}{2^i}$.
\end{itemize}}
\mbox{} \\
\indent	We start by choosing
\begin{align*}
	a_1 \in A \cap B(x,r+\tfrac{\delta}{2}), \\
	a_1' \in A' \cap B(x,r+\tfrac{\delta}{2}),
\end{align*}
with $d(a_1,a_1') \leq \frac{\epsilon}{2}$ and $d(y,a_1) \leq s + \frac{\epsilon}{2}$ according to Step I.

We then continue inductively as follows. First, since $X$ is almost 3-hyperconvex, we can pick some
\begin{align*}
	x_n \in B(a_{n-1},\tfrac{\epsilon}{2^n} + \tfrac{\delta}{2^{n+2}}) \cap B(a_{n-1}',\tfrac{\epsilon}{2^n} + \tfrac{\delta}{2^{n+2}}) \cap B(x, r + \Delta_{n-1} - \tfrac{\epsilon}{2^n} + \tfrac{\delta}{2^{n+2}}).
\end{align*}
We denote $r_n := \tfrac{\epsilon}{2^n} + \tfrac{\delta}{2^{n+2}}$ and $s_n := s + \mathrm{E}_{n-1}$. Observe that
\begin{align*}
	d(x_n,y) \leq d(x_n,a_{n-1}) + d(a_{n-1},y) \leq r_n + s_n.
\end{align*}
Now, by Step I, there are
\begin{align*}
	a_n &\in A \cap B(x_n, r_n + \tfrac{\delta}{2^{n+1}}), \\
	a_n' &\in A' \cap B(x_n, r_n + \tfrac{\delta}{2^{n+1}}),
\end{align*}
with $d(a_n,a_n') \leq \frac{\epsilon}{2^n}$ and $d(y,a_n) \leq s_n + \frac{\epsilon}{2^n} = s + \mathrm{E}_n$.
Moreover, we have
\begin{align*}
	d(a_{n-1},a_n) &\leq d(a_{n-1},x_n) + d(x_n,a_n) \leq 2 r_n + \tfrac{\delta}{2^{n+1}} = \tfrac{2\epsilon+\delta}{2^n},\\
	d(x,a_n) &\leq d(x,x_n) + d(x_n,a_n) \\
	&\leq r + \Delta_{n-1} - \tfrac{\epsilon}{2^n} + \tfrac{\delta}{2^{n+2}} + r_n + \tfrac{\delta}{2^{n+1}} = r + \Delta_n,
\end{align*}
as desired.
\\ \mbox{} \\
\noindent \textbf{Step III.} Observe that the sequences $(a_n)_n,(a_n')_n$ are Cauchy and since $d(a_n,a_n') \to 0$ they have a common limit point
$$a \in A \cap A' \cap B(y,s+\epsilon) \cap B(x,r + \delta) \neq \emptyset.$$
	This concludes the proof.
\end{proof}

\begin{Lem}\label{Lem:intersection of three almost externally hyperconvex sets}
	Let $X$ be a complete, almost 3-hyperconvex metric space and let $A_0 \in \mathcal{\tilde{E}}_3(X)$ and $A_1,A_2 \in \mathcal{\tilde{E}}_2 (X)$ be pairwise intersecting subsets. Then $A_0 \cap A_1 \cap A_2 \neq \emptyset$.
\end{Lem}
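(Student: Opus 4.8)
The plan is to produce a point of $A_0 \cap A_1 \cap A_2$ as the common limit of three sequences, one in each set, whose pairwise distances tend to zero. First I fix witnesses of the pairwise intersections, say $p_0 \in A_1 \cap A_2$, $p_1 \in A_0 \cap A_2$ and $p_2 \in A_0 \cap A_1$; recall that $X$, being complete and almost $3$-hyperconvex, is in particular metrically convex by Lemma~\ref{Lem:almost externally (n+1)-hyperconvex}(iii), so that approximate medians of $\{p_0,p_1,p_2\}$ and approximate midpoints are at my disposal. The whole difficulty is concentrated in a three-set analogue of Step~I in the proof of Lemma~\ref{Lem:almost 3-hyperconvex}, which I isolate as the following claim: for every $\epsilon>0$ and every $x \in X$ with $d(x,A_0),d(x,A_1),d(x,A_2) \le r$, there exist $b_0 \in A_0$, $b_1 \in A_1$ and $b_2 \in A_2$ with $d(b_i,x) \le r+\epsilon$ and $d(b_i,b_j) \le \epsilon$ for all $i,j$. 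Granting this claim, the lemma follows by the scheme of Steps~II and~III there: starting from an approximate median $x_1$ of the triangle and halving the radii at each stage with geometric-series slacks $\tfrac{\epsilon}{2^{n}}$ and $\tfrac{\delta}{2^{n}}$, one uses the almost $3$-hyperconvexity of $X$ to pick an auxiliary center $x_{n+1}$ in the (suitably fattened) intersection of $B(b_i^{\,n},\tfrac{\epsilon}{2^{n+1}})$, $i=0,1,2$, which is legitimate since $d(b_i^{\,n},b_j^{\,n}) \le \tfrac{\epsilon}{2^{n}}$, and then applies the claim at $x_{n+1}$ to obtain a new triple $(b_0^{\,n+1},b_1^{\,n+1},b_2^{\,n+1})$ with pairwise distances at most $\tfrac{\epsilon}{2^{n+1}}$. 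Each of the three sequences is then Cauchy and, because $d(b_i^{\,n},b_j^{\,n}) \to 0$, they share a common limit $a$; as $A_0,A_1,A_2$ are closed, $a \in A_0 \cap A_1 \cap A_2$, as desired.

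It remains to prove the claim, and this is where the asymmetry of the hypotheses enters. The idea is to run a zigzag that cycles through the three sets rather than alternating between two: one marches from the pairwise witnesses $p_0,p_1,p_2$ toward $x$, at each step bridging to the next set by invoking the almost $3$-hyperconvexity of $X$ to place an intermediate center in the intersection of three balls, and then projecting that center into the relevant set. For the two sets $A_1,A_2 \in \mathcal{\tilde{E}}_2(X)$, each projection only ever has to respect two ball conditions and is handled just as in Lemma~\ref{Lem:almost 3-hyperconvex}; for $A_0$, however, the projected point must simultaneously stay close to $x$ and to the two most recently produced points of $A_1$ and $A_2$, i.e.\ it must meet three ball constraints at once, which is possible precisely because $A_0 \in \mathcal{\tilde{E}}_3(X)$. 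Concretely I also expect to invoke Lemma~\ref{Lem:externally (n-k)-hyperconvex} with $n=3$, $k=1$ in order to regard suitable slices $A_0 \cap B(x_1,r_1)$ as members of $\mathcal{\tilde{E}}_2(X)$, so that the two-set machinery of Lemma~\ref{Lem:almost 3-hyperconvex} can be reused with $A_0$ in the role of one of the two sets while the extra ball keeps the construction anchored near $x$.

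I expect the main obstacle to be exactly this simultaneous three-set control. Unlike the two-set situation, projecting an auxiliary center back into the sets incurs a re-entry cost that, if handled naively (passing to a midpoint and reprojecting into one pair at a time), fails to contract the pairwise distances and leaves the defect $\max_i d(\cdot,A_i)$ essentially unchanged; the strengthened hypothesis $A_0 \in \mathcal{\tilde{E}}_3(X)$ is what breaks this deadlock, since the additional ball it affords lets the $A_0$-point track the intermediate center and both neighbouring points at once and thereby forces the three pairwise distances to halve in tandem. Keeping the several sources of error, namely the two geometric series, the almost-hyperconvexity slacks, and the median approximation, summable and mutually consistent throughout the cycling zigzag is the delicate bookkeeping on which the argument ultimately rests; once it is in place, completeness of $X$ and closedness of $A_0,A_1,A_2$ finish the proof as above.
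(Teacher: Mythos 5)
There is a genuine gap, and it sits exactly where you located the ``whole difficulty'': your three-set analogue of Step~I is asserted rather than proved, and the cycling zigzag you sketch for it cannot be carried out. In the two-set Step~I of Lemma~\ref{Lem:almost 3-hyperconvex}, each application of almost external $2$-hyperconvexity that places the next chain point in (say) $A'$ needs the hypothesis $d(a_{n-1},A')\le \tilde{\epsilon}+\Delta_n$, and this is available only because the chain is seeded at a point $y\in A\cap A'$ and thereafter the previous-but-one chain point already lies in $A'$. In your three-set version the seeds $p_0,p_1,p_2$ are three possibly far-apart points, and no point is known to be simultaneously close to all three sets. Marching an $A_1$--$A_2$ chain from $p_0$ toward $x$ gives no bound on the distance from the current chain point to $A_0$ better than roughly $d(\cdot,x)+r$; so when you invoke $A_0\in\mathcal{\tilde{E}}_3(X)$ with balls centered at $x$ and at the most recent $A_1$- and $A_2$-points, the radii of the latter two balls must dominate the (unknown, possibly large) distances from those points to $A_0$ --- they cannot be taken of size $O(\epsilon)$, which is what you need for the pairwise distances $d(b_i,b_j)$ to contract. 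Taking them of size $O(r)$ instead leaves the defect unchanged, which is precisely the ``deadlock'' you mention; the hypothesis $A_0\in\mathcal{\tilde{E}}_3(X)$ does not break it in the way you describe, because the obstruction is in the admissibility conditions $d(a_i,A_0)\le\rho_i$ for the balls, not in their number.

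For comparison, the paper's proof avoids simultaneous three-set control altogether. It runs a single sequence $(x_n)$ inside $A_1\cap A_2$ and contracts $r_n:=d(x_n,A_0)$ by the fixed factor $\tfrac34$: Lemma~\ref{Lem:almost 3-hyperconvex} (applied to the pairs $(A_0,A_1)$ and then, after slicing $A_0$ with a ball via Lemma~\ref{Lem:externally (n-k)-hyperconvex}, to $(A_0,A_2)$) produces $y_n\in A_0\cap A_1$ and $z_n\in A_0\cap A_2$ at distance of order $r_n$ from $x_n$; the almost external $3$-hyperconvexity of $A_0$ is then used with the three balls centered at $x_n,y_n,z_n$ --- whose centers are at distance $0$ from $A_0$ in the last two cases, so the admissibility conditions are free --- to produce an approximate median $\bar{x}_n\in A_0$ with $d(x_n,\bar{x}_n)\le\tfrac{13}{12}r_n$ and $d(\bar{x}_n,A_1),d(\bar{x}_n,A_2)\le\tfrac23 r_n$; a final application of Lemma~\ref{Lem:almost 3-hyperconvex} to $(A_1,A_2)$ anchored at $x_n$ returns $x_{n+1}\in A_1\cap A_2$ with $d(x_{n+1},A_0)\le\tfrac34 r_n$ and $d(x_n,x_{n+1})\le\tfrac12 r_n$, and completeness finishes. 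If you wish to keep your three-sequence framework, you would first have to prove your Step~I claim by some such contraction anchored in the pairwise intersections --- at which point the lemma is essentially already proved --- so as it stands the proposal does not constitute a proof.
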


\begin{proof}
	Choose some point $x_0 \in A_1 \cap A_2$ and let $r_0 := d(x_0,A_0)$. By Lemma~\ref{Lem:almost 3-hyperconvex} there is $y_0 \in A_0 \cap A_1 \cap B(x_0,r_0+\frac{r_0}{12})$. Define $A_0' := A_0 \cap B(y_0,\frac{7}{6}r) \in \mathcal{\tilde{E}}_2(X)$. Using again Lemma~\ref{Lem:almost 3-hyperconvex} we have $A_0'\cap A_2 = A_0 \cap A_2 \cap B(y_0,\frac{13}{12}r_0+\frac{r_0}{12}) \neq \emptyset$ and therefore there is some $z_0 \in A_0' \cap A_2 \cap B(x_0,\frac{13}{12}r_0+\frac{r_0}{12}) = A_0 \cap A_2 \cap B(x_0,\frac{7}{6}r_0) \cap B(y_0,\frac{7}{6}r_0)$. Then, since $A_0$ is almost externally 3-hyperconvex, there is some
	$$\bar{x}_0 \in B(x_0,r_0+\tfrac{r_0}{12}) \cap B(y_0, \tfrac{7}{12}r_0+\tfrac{r_0}{12}) \cap B(z_0,\tfrac{7}{12}r_0+\tfrac{r_0}{12})\cap A_0$$
	and using again Lemma~\ref{Lem:almost 3-hyperconvex}, we find
	$$x_1 \in A_1 \cap A_2 \cap B(\bar{x}_0,\tfrac{2}{3}r_0 + \tfrac{r_0}{12})\cap B(x_0,\tfrac{5}{12} r_0 + \tfrac{r_0}{12}).$$
	Note that $d(x_1,A_0) \leq d(x_1,\bar{x}_0) \leq \frac{3}{4}r_0 =: r_1$ and $d(x_0,x_1) \leq \frac{1}{2}r_0$. Proceeding this way, we get some sequence $(x_n)_n \subset A_1 \cap A_2$ with $d(x_n, A_0) \leq \left(\frac{3}{4}\right)^nr_0$ and $d(x_n,x_{n+1}) \leq \frac{1}{2}\left(\frac{3}{4}\right)^nr_0$. Hence $(x_n)_n$ is a Cauchy sequence and therefore  converges to some $x \in A_0 \cap A_1 \cap A_2 \neq \emptyset$, since $A_0$ is closed.
\end{proof}

We are now able to give a proof of Theorem~\ref{Thm:AlmostAndComplete}. In fact, 
we will show the following more general result.

\begin{Prop}\label{Prop:almost n-hyperconvex}
	Let $X$ be a complete metric space. Then for every $n \geq 3$ we have
\begin{enumerate}[(i)]
	\item $A \in \mathcal{\tilde{E}}_n(X) \Rightarrow A \in \mathcal{E}_n(X)$,
	\item $A \in \mathcal{\tilde{W}}_n(X) \Rightarrow A \in \mathcal{W}_n(X)$, and
	\item if $X$ is an almost $n$-hyperconvex metric space then $X$ is $n$-hyperconvex.
\end{enumerate}
\end{Prop}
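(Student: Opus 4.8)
The plan is to deduce all three statements from~(i) and to prove~(i) by induction on $n \geq 3$.

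\emph{Reductions.} Statement~(iii) is the special case $A = X$ of~(i): since $d(x_i,X) = 0 \leq r_i$ automatically, $X \in \mathcal{\tilde{E}}_n(X)$ is precisely the almost $n$-hyperconvexity of $X$, and $X \in \mathcal{E}_n(X)$ its $n$-hyperconvexity. For~(ii), fix $x \in X$ and put $Y := A \cup \{x\}$; as $A$ is closed in the complete space $X$, the set $Y$ is complete, and $A \in \mathcal{\tilde{W}}_n(X)$ means exactly $A \in \mathcal{\tilde{E}}_n(Y)$. Applying~(i) in $Y$ gives $A \in \mathcal{E}_n(Y)$, i.e. $A$ is externally $n$-hyperconvex in $A \cup \{x\}$; as $x$ was arbitrary, $A \in \mathcal{W}_n(X)$. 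Before the induction I would record that $A \in \mathcal{\tilde{E}}_n(X)$ implies $A \in \mathcal{\tilde{E}}_3(X)$ (pad a family of three balls with copies of a large ball centred at $x_1$), whence $A \in \mathcal{E}_2(X)$ by Lemma~\ref{Lem:almost externally (n+1)-hyperconvex}(i).

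\emph{Inductive step.} Assume~(i) holds for $n-1 \geq 3$ and let $\{B(x_i,r_i)\}_{i=1}^n$ satisfy $d(x_i,A) \leq r_i$ and $d(x_i,x_j) \leq r_i + r_j$. By Lemma~\ref{Lem:externally (n-k)-hyperconvex} with $k=1$, the set $A' := A \cap B(x_1,r_1)$ lies in $\mathcal{\tilde{E}}_{n-1}(X)$, so $A' \in \mathcal{E}_{n-1}(X)$ by the induction hypothesis. For each $i \geq 2$, external $2$-hyperconvexity of $A$ applied to $B(x_1,r_1)$ and $B(x_i,r_i)$ yields $A \cap B(x_1,r_1) \cap B(x_i,r_i) \neq \emptyset$, hence $d(x_i,A') \leq r_i$. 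External $(n-1)$-hyperconvexity of $A'$ applied to $B(x_2,r_2),\dots,B(x_n,r_n)$ then produces a point in $A' \cap \bigcap_{i=2}^n B(x_i,r_i) = A \cap \bigcap_{i=1}^n B(x_i,r_i)$.

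\emph{Base case $n=3$.} Here I must work harder, since $X$ is only complete and hence the balls $B(x_i,r_i)$ need not lie in $\mathcal{\tilde{E}}_2(X)$; thus Lemmas~\ref{Lem:almost 3-hyperconvex} and~\ref{Lem:intersection of three almost externally hyperconvex sets} cannot be quoted verbatim. Instead I would set $A_1 := A \cap B(x_1,r_1)$ and $A_2 := A \cap B(x_2,r_2)$, which lie in $\mathcal{\tilde{E}}_2(X)$ by Lemma~\ref{Lem:externally (n-k)-hyperconvex}, fix an anchor $y_0 \in A_1 \cap A_2$ (nonempty since $A \in \mathcal{E}_2(X)$), and seek a point of $A_1 \cap A_2$ inside $B(x_3,r_3)$, noting $d(x_3,A_1),\, d(x_3,A_2) \leq r_3$, again by $A \in \mathcal{E}_2(X)$. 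I would then re-run the iteration from the proofs of Lemmas~\ref{Lem:almost 3-hyperconvex} and~\ref{Lem:intersection of three almost externally hyperconvex sets}, with $A_1, A_2$ in the role of the two $\mathcal{\tilde{E}}_2$-sets and $(x_3,r_3)$ in the role of the external ball. The crucial observation is that the only place where those proofs invoke the almost $3$-hyperconvexity of the ambient space is the selection, at each step, of an auxiliary point in the approximate common intersection of three balls whose centres are two points of $A$ (at distance $0$ from $A$) together with $x_3$ (at distance $\leq r_3$ from $A$); since all these centres lie within the prescribed radii of $A$, that selection can instead be carried out inside $A$ using the almost external $3$-hyperconvexity of $A$. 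The geometric decay of the radii makes the resulting sequence Cauchy, and its limit lies in $A_1 \cap A_2 \cap B(x_3,r_3) = A \cap \bigcap_{i=1}^3 B(x_i,r_i)$ by closedness.

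I expect the base case to be the main obstacle. The delicate point is the radii and compatibility bookkeeping needed to guarantee that, throughout the iteration, the three relevant ball-centres genuinely stay within their radii of $A$ (in particular that the ball about $x_3$ keeps radius $\geq d(x_3,A)$), so that the auxiliary points may legitimately be chosen in $A$. This is exactly the step that upgrades the ambient hypothesis of Lemmas~\ref{Lem:almost 3-hyperconvex} and~\ref{Lem:intersection of three almost externally hyperconvex sets} to the external setting required here.
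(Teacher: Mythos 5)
Your reductions of (ii) and (iii) to (i) are exactly the paper's, and your inductive step for $n\ge 4$ is correct and checkable as written; it is in fact a cleaner route than the paper's Case~2, which does not induct on $n$ but instead writes the $n$-fold intersection as $A_1\cap A_2\cap B$ with $A_1,A_2\in\mathcal{\tilde{E}}_3(X)$, $B\in\mathcal{\tilde{E}}_2(X)$ and invokes Lemma~\ref{Lem:intersection of three almost externally hyperconvex sets}. Your peeling-off argument via Lemma~\ref{Lem:externally (n-k)-hyperconvex} with $k=1$ together with Lemma~\ref{Lem:almost externally (n+1)-hyperconvex} avoids that lemma entirely.

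The base case $n=3$ is where your proposal stops short of a proof, and it is the crux. Your diagnosis agrees with what the paper actually does: its Case~1 sets up $A_1,A_2\in\mathcal{\tilde{E}}_2(X)$, a point $y_0\in A_1\cap A_2$ and $d(x_3,A_i)\le r_3$, and then cites Lemma~\ref{Lem:almost 3-hyperconvex} directly, even though that lemma assumes the ambient space is almost $3$-hyperconvex, which $X$ is not here. The intended repair is the one you describe: the only use of ambient almost $3$-hyperconvexity is the selection of the auxiliary point $x_n$ in Step~II of that lemma's proof, and since two of the three centres lie in $A$ and the third is $x_3$ with $d(x_3,A)\le r_3$, one wants to select $x_n$ via the almost external $3$-hyperconvexity of $A$. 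But the bookkeeping you defer is precisely where this bites: in Step~II the ball about $x$ has radius $r+\Delta_{n-1}-\tfrac{\epsilon}{2^n}+\tfrac{\delta}{2^{n+2}}$, which falls below $r\ge d(x,A)$ whenever $\delta$ is small relative to $\epsilon$ (and in the application the lemma is called with $\delta$ of the order of the lemma's $\epsilon$ divided by $4$), so the external selection cannot be performed with these radii as they stand and the estimates of Step~II must be rebalanced. Since you explicitly leave this verification open, the base case --- and with it the whole proposition --- remains a sound plan rather than a completed argument.
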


\begin{proof}
	First note that (ii) and (iii) directly follow from (i) since 
\begin{itemize}
	\item $A \in \mathcal{W}_n(X) \Leftrightarrow A \in \mathcal{E}_n(A \cup \{x\})$ for every $x\in X$, and
	\item $A$ is $n$-hyperconvex $\Leftrightarrow A \in \mathcal{E}_n(A)$.
\end{itemize}	
	  To prove (i), we now consider two cases.
\\ \mbox{} \\
\noindent \emph{\textbf{Case 1.}} Let $A \in \mathcal{\tilde{E}}_3(X)$ and let $\{ B(x_i,r_i) \}_{i=1}^3$ be a family of closed balls with $d(x_i,x_j) \leq r_i + r_j$. By Lemma~\ref{Lem:almost externally (n+1)-hyperconvex} $A$ is externally 2-hyperconvex and hence there is some $y_0 \in A \cap B(x_1,r_1) \cap B(x_2,r_2)$. Set $s := d(y_0,x_3)-r_3$. By Lemma~\ref{Lem:externally (n-k)-hyperconvex} we have $A_i := A \cap B(x_i,r_i) \in \mathcal{\tilde{E}}_2(X)$ and $d(x_3,A_i) \leq r_3$, for $i=1,2$, since it holds that $A \cap B(x_i,r_i) \cap B(x_3,r_3) \neq \emptyset$ as $A \in \mathcal{E}_2(X)$. By Lemma~\ref{Lem:almost 3-hyperconvex} we therefore find inductively
\begin{align*}
	y_1 &\in A_1 \cap A_2 \cap B(x_3,r_3+\tfrac{\epsilon}{2}) \cap B(y_0,s+\epsilon), \\
	y_2 &\in A_1 \cap A_2 \cap B(x_3,r_3+\tfrac{\epsilon}{4}) \cap B(y_1,\tfrac{\epsilon}{2}+\tfrac{\epsilon}{2}), \\
	y_n &\in A_1 \cap A_2 \cap B(x_3,r_3+\tfrac{\epsilon}{2^n}) \cap B(y_{n-1},\tfrac{\epsilon}{2^{n-1}}+\tfrac{\epsilon}{2^{n-1}}).
\end{align*}
	Since $d(y_n,y_{n-1}) \leq \frac{\epsilon}{2^{n-2}}$, this is a Cauchy sequence which converges to some $$y \in A_1 \cap A_2 \cap B(x_3,r_3) = A \cap \bigcap_{i=1}^3 B(x_i,r_i) \neq \emptyset.$$

\noindent \emph{\textbf{Case 2.}} Assume now that $n \geq 4$ and let $\{ B(x_i,r_i) \}_{i=1}^n$ be a family of closed balls with $d(x_i,x_j) \leq r_i + r_j$, $d(x_i,A) \leq r_i$. Then $B:= A \cap \bigcap_{j=3}^n B(x_j,r_j) \in \mathcal{\tilde{E}}_2(X)$ by Lemma~\ref{Lem:externally (n-k)-hyperconvex}. Moreover, for $i=1,2$, we have $A_i := A \cap B(x_i,r_i) \in \mathcal{\tilde{E}}_3(X)$ by Lemma~\ref{Lem:externally (n-k)-hyperconvex} and $A_i \cap B = A \cap B(x_i,r_i) \cap \bigcap_{j=3}^n B(x_j,r_j) \neq \emptyset$ by Lemma~\ref{Lem:almost externally (n+1)-hyperconvex}. Furthermore, observe that $A_1,A_2,B \subset A$ and $A$ is almost $n$-hyperconvex. Hence by Lemma~\ref{Lem:intersection of three almost externally hyperconvex sets} we get $$
A \cap \bigcap_{i=1}^n B(x_i,r_i)  = A_1 \cap A_2 \cap B \neq \emptyset,$$
as desired.
\end{proof}

This proves Theorem~\ref{Thm:AlmostAndComplete} and as a consequence we get the following result for the metric completion of an $n$-hyperconvex metric space.

\begin{Cor}
	Let $X$ be an $n$-hyperconvex metric space for $n \geq 3$. Then its metric completion is $n$-hyperconvex as well.
\end{Cor}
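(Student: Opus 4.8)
The plan is to reduce everything to Theorem~\ref{Thm:AlmostAndComplete}. Denote by $\overline{X}$ the metric completion of $X$; it is complete by construction and contains $X$ as a dense subset. Since $n \geq 3$, Theorem~\ref{Thm:AlmostAndComplete} says that a complete, almost $n$-hyperconvex space is $n$-hyperconvex, so it suffices to prove that $\overline{X}$ is \emph{almost} $n$-hyperconvex. Concretely, I would take an arbitrary family of closed balls $\{B(\bar{x}_i,r_i)\}_{i=1}^n$ in $\overline{X}$ with $d(\bar{x}_i,\bar{x}_j)\leq r_i+r_j$, fix $\epsilon>0$, and produce a point of $\bigcap_{i=1}^n B(\bar{x}_i,r_i+\epsilon)$.

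The first step is to approximate the centers from within the dense subset $X$. Using density, choose $x_i \in X$ with $d(x_i,\bar{x}_i)\leq \epsilon/2$ for each $i$. The triangle inequality then gives
\[
d(x_i,x_j) \leq d(x_i,\bar{x}_i) + d(\bar{x}_i,\bar{x}_j) + d(\bar{x}_j,x_j) \leq r_i + r_j + \epsilon = \Bigl(r_i + \tfrac{\epsilon}{2}\Bigr) + \Bigl(r_j + \tfrac{\epsilon}{2}\Bigr),
\]
so the balls $B(x_i, r_i + \tfrac{\epsilon}{2})$ have centers in $X$ and satisfy the compatibility condition required by $n$-hyperconvexity.

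The second step invokes $n$-hyperconvexity of $X$: there is some $y \in X$ lying in $\bigcap_{i=1}^n B(x_i, r_i + \tfrac{\epsilon}{2})$. One more application of the triangle inequality yields, for every $i$,
\[
d(y,\bar{x}_i) \leq d(y,x_i) + d(x_i,\bar{x}_i) \leq \Bigl(r_i + \tfrac{\epsilon}{2}\Bigr) + \tfrac{\epsilon}{2} = r_i + \epsilon,
\]
so that $y \in \bigcap_{i=1}^n B(\bar{x}_i, r_i+\epsilon) \cap \overline{X}$. As $\epsilon>0$ was arbitrary, $\overline{X}$ is almost $n$-hyperconvex, and Theorem~\ref{Thm:AlmostAndComplete} then delivers that $\overline{X}$ is $n$-hyperconvex.

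There is no serious obstacle here: the whole argument is a density/approximation estimate feeding into the completeness result of Theorem~\ref{Thm:AlmostAndComplete}. The only point demanding a little care is the bookkeeping of the error terms — one splits the single tolerance $\epsilon$ into the $\epsilon/2$ spent approximating the centers and the $\epsilon/2$ absorbed into the radii — so that the point $y$ produced by $n$-hyperconvexity of $X$ indeed lands in the $\epsilon$-enlarged balls around the original centers $\bar{x}_i$.
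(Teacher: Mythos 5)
Your proof is correct and is exactly the argument the paper intends: the corollary is stated as an immediate consequence of Theorem~\ref{Thm:AlmostAndComplete} (via Proposition~\ref{Prop:almost n-hyperconvex}), with the density/approximation step showing the completion is almost $n$-hyperconvex left implicit. Your careful splitting of $\epsilon$ between the centers and the radii fills in precisely that omitted step.
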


According to Proposition~\ref{Prop:almost n-hyperconvex} and the following Lemma~\ref{Lem:almost externally 2-hypercovnex}, we get that in a complete, 4-hyperconvex metric space $X$, there is no difference between $\mathcal{\tilde{E}}_n(X)$ and $\mathcal{E}_n(X)$ for $n \geq 2$.

\begin{Lem}\label{Lem:almost externally 2-hypercovnex}
	Let $X$ be a complete, 4-hyperconvex metric space. If $A \in \mathcal{\tilde{E}}_2(X)$ then we have $A \in \mathcal{E}_2(X)$.
\end{Lem}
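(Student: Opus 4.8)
The plan is to upgrade the almost external $2$-hyperconvexity of $A$ to external $2$-hyperconvexity by a contraction-type argument: starting from a point of $A$ lying in slightly enlarged balls, I would iteratively produce a new point of $A$ that is close to the previous one while shrinking the overshoot of the two radii, and then pass to the limit using completeness of $X$ and closedness of $A$.

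Fix two balls $B(x_1,r_1),B(x_2,r_2)$ with $x_i\in X$, $d(x_i,A)\le r_i$ and $d(x_1,x_2)\le r_1+r_2$, and set $\epsilon_n:=2^{-n}$. Since $A\in\mathcal{\tilde{E}}_2(X)$, one may pick $y_0\in A\cap B(x_1,r_1+\epsilon_0)\cap B(x_2,r_2+\epsilon_0)$. For the inductive step, suppose $y_n\in A\cap B(x_1,r_1+\epsilon_n)\cap B(x_2,r_2+\epsilon_n)$ is given, and put $\rho_n:=\epsilon_n$ and $B_n':=B(x_1,r_1+\epsilon_{n+1})\cap B(x_2,r_2+\epsilon_{n+1})$. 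The crucial observation is that, because $X$ is $4$-hyperconvex, Corollary~\ref{Cor:externally (n-k)-hyperconvex} (applied with $n=4$) tells us that the single ball $B(y_n,\rho_n)$ lies in $\mathcal{\tilde{E}}_3(X)$ (take $k=1$) and that $B_n'$ lies in $\mathcal{\tilde{E}}_2(X)$ (take $k=2$). Together with $A\in\mathcal{\tilde{E}}_2(X)$, this places us in the setting of Lemma~\ref{Lem:intersection of three almost externally hyperconvex sets}, applied to the triple $A_0:=B(y_n,\rho_n)$, $A_1:=A$, $A_2:=B_n'$, whose hypotheses on the ambient space hold since $X$ is complete and, being $4$-hyperconvex, almost $3$-hyperconvex.

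To invoke that lemma I must verify that the three sets intersect pairwise. One has $y_n\in A\cap B(y_n,\rho_n)$, and $A\cap B_n'\neq\emptyset$ by almost external $2$-hyperconvexity of $A$ applied to $B(x_1,r_1),B(x_2,r_2)$ with $\epsilon=\epsilon_{n+1}$. For the remaining intersection $B(y_n,\rho_n)\cap B_n'$ I would use that $X$ is $3$-hyperconvex: the three balls $B(y_n,\rho_n)$, $B(x_1,r_1+\epsilon_{n+1})$, $B(x_2,r_2+\epsilon_{n+1})$ satisfy the required pairwise conditions, since $d(y_n,x_i)\le r_i+\epsilon_n\le \rho_n+r_i+\epsilon_{n+1}$ (as $\epsilon_n-\epsilon_{n+1}\le\epsilon_n=\rho_n$) and $d(x_1,x_2)\le r_1+r_2$. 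Hence Lemma~\ref{Lem:intersection of three almost externally hyperconvex sets} yields $y_{n+1}\in A\cap B(y_n,\rho_n)\cap B_n'$, so that $d(y_n,y_{n+1})\le\rho_n=\epsilon_n$ and $y_{n+1}\in A\cap B(x_1,r_1+\epsilon_{n+1})\cap B(x_2,r_2+\epsilon_{n+1})$, closing the induction.

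Finally, since $\sum_n\rho_n=\sum_n 2^{-n}<\infty$, the sequence $(y_n)_n$ is Cauchy and converges to some $y\in A$ because $A$ is closed; from $d(y,x_i)=\lim_n d(y_n,x_i)\le\lim_n(r_i+\epsilon_n)=r_i$ we conclude $y\in A\cap B(x_1,r_1)\cap B(x_2,r_2)$, which gives $A\in\mathcal{E}_2(X)$. The main obstacle is the reduction just described: recognizing that $4$-hyperconvexity of $X$ lets one regard a single ball as a member of $\mathcal{\tilde{E}}_3(X)$ and an intersection of two balls as a member of $\mathcal{\tilde{E}}_2(X)$, which is precisely what makes the three-set Helly-type Lemma~\ref{Lem:intersection of three almost externally hyperconvex sets} applicable at each stage; checking the ball-versus-pair pairwise intersection by means of the $3$-hyperconvexity of $X$ is the one point that must be arranged carefully.
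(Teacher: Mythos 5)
Your argument is correct, and it rests on exactly the same two ingredients as the paper's proof: the observation that $4$-hyperconvexity of $X$ puts a single ball in $\mathcal{\tilde{E}}_3(X)$ and an admissible intersection of two balls in $\mathcal{\tilde{E}}_2(X)$ (Corollary~\ref{Cor:externally (n-k)-hyperconvex}), and the three-set Helly-type Lemma~\ref{Lem:intersection of three almost externally hyperconvex sets}. The difference is that your iterative Cauchy-sequence scheme is superfluous: Lemma~\ref{Lem:intersection of three almost externally hyperconvex sets} already produces a point in the \emph{exact} intersection, so one can apply it a single time with $A_0:=B(x_1,r_1)\in\mathcal{\tilde{E}}_3(X)$, $A_1:=B(x_2,r_2)\in\mathcal{\tilde{E}}_2(X)$ and $A_2:=A$, which is what the paper does. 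The only point to check in that direct application is the pairwise intersection $A\cap B(x_i,r_i)\neq\emptyset$, which follows because $A\in\mathcal{\tilde{E}}_2(X)$ implies $A\in\mathcal{E}_1(X)$, i.e.\ $A$ is proximinal, by Lemma~\ref{Lem:almost externally (n+1)-hyperconvex}; your detour through enlarged balls $B(x_i,r_i+\epsilon_{n+1})$ avoids invoking proximinality but pays for it with an induction and a limit argument that the lemma renders unnecessary. In short: the proof is valid, but it re-derives by iteration what the key lemma gives in one step.
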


\begin{proof}
	Let $\{ B(x_i,r_i) \} _{i=1}^2$ with $d(x_1,x_2) \leq r_1 + r_2$ and $d(x_i,A) \leq r_i$. Then we have $B(x_i,r_i) \in \mathcal{\tilde{E}}_3(X)$ by Lemma~\ref{Lem:externally (n-k)-hyperconvex} and $A \cap B(x_i,r_i) \neq \emptyset$ by Lemma~\ref{Lem:almost externally (n+1)-hyperconvex}. Therefore we get $$B(x_1,r_1) \cap B(x_2,r_2) \cap A \neq \emptyset$$ by Lemma~\ref{Lem:intersection of three almost externally hyperconvex sets}.
\end{proof}

We now turn our attention to complete, $4$-hyperconvex metric spaces and eventually prove that they are $n$-hyperconvex for every $n$.

\begin{Lem}\label{Lem:intersection of externally 3-hyperconvex is externally 2-hyperconvex}
	Let $X$ be a complete, 4-hyperconvex metric space. If $A_0 \in \mathcal{E}_3(X)$, $A_1 \in \mathcal{E}_2(X)$ and $A_0 \cap A_1 \neq \emptyset$, then  $A_0 \cap A_1 \in \mathcal{E}_2(X)$.
\end{Lem}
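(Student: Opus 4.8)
The plan is to verify the defining property of external $2$-hyperconvexity for $A_0 \cap A_1$ directly, via a single application of Lemma~\ref{Lem:intersection of three almost externally hyperconvex sets}. So I would fix two closed balls $B(x_1,r_1)$, $B(x_2,r_2)$ with $d(x_i, A_0 \cap A_1) \leq r_i$ for $i=1,2$ and $d(x_1,x_2) \leq r_1 + r_2$, and aim to produce a point in $A_0 \cap A_1 \cap B(x_1,r_1) \cap B(x_2,r_2)$.

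The key idea is to package the two balls into a \emph{single} almost externally $2$-hyperconvex set rather than intersecting $A_0$ or $A_1$ with them one at a time. Since $X$ is complete and $4$-hyperconvex, hence almost $4$-hyperconvex, Corollary~\ref{Cor:externally (n-k)-hyperconvex} with $n=4$ and $k=2$ applies to the admissible pair of balls and shows $B := B(x_1,r_1) \cap B(x_2,r_2) \in \mathcal{\tilde{E}}_2(X)$. I would then run Lemma~\ref{Lem:intersection of three almost externally hyperconvex sets} on the triple $A_0, A_1, B$, with $A_0 \in \mathcal{E}_3(X) \subset \mathcal{\tilde{E}}_3(X)$ in the role of the externally $3$-hyperconvex set and $A_1 \in \mathcal{E}_2(X) \subset \mathcal{\tilde{E}}_2(X)$, $B \in \mathcal{\tilde{E}}_2(X)$ in the roles of the two almost externally $2$-hyperconvex sets.

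It then remains to check that these three sets are pairwise intersecting, which is exactly where the hypotheses enter. The intersection $A_0 \cap A_1$ is nonempty by assumption. For the other two pairs, note that $d(x_i, A_0) \leq d(x_i, A_0 \cap A_1) \leq r_i$ and likewise $d(x_i, A_1) \leq r_i$; since $A_0 \in \mathcal{E}_3(X) \subset \mathcal{E}_2(X)$ and $A_1 \in \mathcal{E}_2(X)$, external $2$-hyperconvexity applied to the admissible pair $B(x_1,r_1), B(x_2,r_2)$ yields $A_0 \cap B \neq \emptyset$ and $A_1 \cap B \neq \emptyset$. Lemma~\ref{Lem:intersection of three almost externally hyperconvex sets} then delivers $A_0 \cap A_1 \cap B \neq \emptyset$, which is precisely the required intersection; as the balls were arbitrary, this gives $A_0 \cap A_1 \in \mathcal{E}_2(X)$.

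I expect no genuine obstacle here, only two pieces of routine bookkeeping: the inclusion $\mathcal{E}_3(X) \subset \mathcal{\tilde{E}}_3(X)$ (an exact nonempty intersection is a fortiori nonempty after fattening, and externally hyperconvex sets are proximinal, hence closed), and the inclusion $\mathcal{E}_3(X) \subset \mathcal{E}_2(X)$ (obtained by padding an admissible $2$-family into an admissible $3$-family, e.g.\ by duplicating one of the balls). The conceptual point that makes the argument short is resisting the temptation to absorb the balls into $A_0$ or $A_1$ separately: $A_1$ is only externally $2$-hyperconvex and so cannot absorb a ball at all, and Lemma~\ref{Lem:externally (n-k)-hyperconvex} lets $A_0$ absorb only one ball; the right move is instead to use the $4$-hyperconvexity of $X$ to turn the intersection of both balls into one element of $\mathcal{\tilde{E}}_2(X)$.
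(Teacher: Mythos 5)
Your proposal is correct and follows essentially the same route as the paper: the paper likewise defines $A_2 := B(x_1,r_1)\cap B(x_2,r_2)$ as a single (almost) externally $2$-hyperconvex set, checks the three pairwise intersections exactly as you do, and concludes with Lemma~\ref{Lem:intersection of three almost externally hyperconvex sets}. The only cosmetic difference is that you keep $A_2$ in $\mathcal{\tilde{E}}_2(X)$ via Corollary~\ref{Cor:externally (n-k)-hyperconvex}, whereas the paper asserts $A_2\in\mathcal{E}_2(X)$ outright; both suffice for the lemma being invoked.
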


\begin{proof}
	Let $B(x_1,r_1),B(x_2,r_2)$ be closed balls such that  $d(x_1,x_2) \leq r_1 + r_2$ and $d(x_i,A_0 \cap A_1) \leq r_i$. Define $A_2:= B(x_1,r_1) \cap B(x_2,r_2) \in \mathcal{E}_2(X)$. Since for $k=0,1$, the sets $A_k$ are externally 2-hyperconvex, we have
	$$A_2 \cap A_k = B(x_1,r_1) \cap B(x_2,r_2) \cap A_k \neq \emptyset.$$
	Therefore, we get
	$$A_0 \cap A_1 \cap B(x_1,r_1) \cap B(x_2,r_2) = A_0 \cap A_1 \cap A_2 \neq \emptyset$$
	by Lemma~\ref{Lem:intersection of three almost externally hyperconvex sets}.
\end{proof}

\begin{Prop}\label{Prop:4-hyperconvex is n-hyperconvex}
	Let $X$ be a complete, 4-hyperconvex metric space. Then
\begin{enumerate}[(i)]
	\item $X$ is $n$-hyperconvex for every $n \in \mathbb{N}$,
	\item if $A \in \mathcal{E}_2(X)$, we have $A \in \mathcal{E}_n(X)$ for every $n \in \mathbb{N}$, and
	\item if $A_1, \ldots , A_m  \in \mathcal{E}_2(X)$ with $A_i \cap A_j \neq \emptyset$ for every $i,j$, then $\bigcap_{i=1}^m A_i \in \mathcal{E}_n(X)$ for every $n \in \mathbb{N}$.
\end{enumerate}
\end{Prop}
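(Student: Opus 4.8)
My plan is to reduce all three parts to statement (ii) together with a Helly-type nonemptiness result, proving them in the order (ii), Helly, (iii), (i). The underlying reason is that both (i) and (iii) become almost formal once one knows that finitely many pairwise intersecting externally $2$-hyperconvex sets have non-empty intersection. Before starting I would record three facts that hold in a complete $4$-hyperconvex space $X$. First, since $X$ is $4$-hyperconvex we have $X\in\mathcal{\tilde{E}}_4(X)$, so Lemma~\ref{Lem:externally (n-k)-hyperconvex} with $A=X$ and $k=1$ shows that every closed ball lies in $\mathcal{\tilde{E}}_3(X)=\mathcal{E}_3(X)$, the identity $\mathcal{\tilde{E}}_n(X)=\mathcal{E}_n(X)$ for $n\ge 2$ coming from Proposition~\ref{Prop:almost n-hyperconvex} and Lemma~\ref{Lem:almost externally 2-hypercovnex}. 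Second, padding a family of $k$ admissible balls by a repeated copy gives $\mathcal{E}_{k+1}(X)\subset\mathcal{E}_k(X)$; in particular every $A\in\mathcal{E}_2(X)$ is proximinal. Third, $X$ is metrically convex and $X\in\mathcal{E}_2(X)$, since external $n$-hyperconvexity of $X$ inside $X$ is just $n$-hyperconvexity.

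The heart of the argument is (ii), which I would prove by induction on $n$: every $A\in\mathcal{E}_2(X)$ lies in $\mathcal{E}_n(X)$. For the step from $n$ to $n+1$, take admissible balls $B(y_1,s_1),\dots,B(y_{n+1},s_{n+1})$ for $A$ (that is, with $d(y_l,A)\le s_l$ and $d(y_l,y_{l'})\le s_l+s_{l'}$) and set $A^{\ast}:=A\cap B(y_{n+1},s_{n+1})$. This set is non-empty by proximinality, and it belongs to $\mathcal{E}_2(X)$ by Lemma~\ref{Lem:intersection of externally 3-hyperconvex is externally 2-hyperconvex} applied to the externally $3$-hyperconvex ball $B(y_{n+1},s_{n+1})$ and to $A$; by the induction hypothesis it even lies in $\mathcal{E}_n(X)$. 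The delicate point, which is what makes the whole scheme work, is that the first $n$ balls still reach $A^{\ast}$: for $l\le n$ one has $A\cap B(y_l,s_l)\cap B(y_{n+1},s_{n+1})\neq\emptyset$ exactly because $A\in\mathcal{E}_2(X)$ and these two balls form an admissible pair for $A$, whence $d(y_l,A^{\ast})\le s_l$. Applying $A^{\ast}\in\mathcal{E}_n(X)$ to $B(y_1,s_1),\dots,B(y_n,s_n)$ produces a point of $A\cap\bigcap_{l=1}^{n+1}B(y_l,s_l)$.

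Once (ii) is available, every externally $2$-hyperconvex set is in particular externally $3$-hyperconvex, which activates both triple-intersection tools. I would then establish the Helly statement that any finite pairwise intersecting family $A_1,\dots,A_m\in\mathcal{E}_2(X)$ satisfies $\bigcap_i A_i\neq\emptyset$, by induction on $m$. The case $m=3$ is Lemma~\ref{Lem:intersection of three almost externally hyperconvex sets}, now applicable because one of the three sets is externally $3$-hyperconvex. For the step with $m\ge 3$, I replace each $A_i$ ($i\le m$) by $A_i':=A_i\cap A_{m+1}$, which lies in $\mathcal{E}_2(X)$ by Lemma~\ref{Lem:intersection of externally 3-hyperconvex is externally 2-hyperconvex} and which is pairwise intersecting since $A_i'\cap A_j'=A_i\cap A_j\cap A_{m+1}\neq\emptyset$, again by Lemma~\ref{Lem:intersection of three almost externally hyperconvex sets}; the induction hypothesis applied to $A_1',\dots,A_m'$ then yields $\bigcap_{i=1}^{m+1}A_i\neq\emptyset$.

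Finally I would deduce (iii) and (i). For (iii), to show $\bigcap_i A_i\in\mathcal{E}_n(X)$ I take admissible balls $B(y_1,s_1),\dots,B(y_n,s_n)$ for $\bigcap_i A_i$ and apply the Helly statement to the enlarged family $\{A_1,\dots,A_m,B(y_1,s_1),\dots,B(y_n,s_n)\}$, all of whose members lie in $\mathcal{E}_2(X)$. This family is pairwise intersecting: the $A_i$ by hypothesis, each $A_i$ with each ball by proximinality (using $d(y_l,A_i)\le d(y_l,\bigcap_i A_i)\le s_l$), and any two balls by metric convexity; its total intersection is precisely $\bigcap_i A_i\cap\bigcap_l B(y_l,s_l)$. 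Statement (i) is then the special case $A=X\in\mathcal{E}_2(X)$ of (ii). I expect the genuine obstacle to be the reaching verification in the inductive step of (ii): intersecting with the last ball shrinks $A$, and one must guarantee that the remaining balls still meet the smaller set — this is precisely where external $2$-hyperconvexity is indispensable, and it is the hinge on which the Helly induction, and hence all three parts, ultimately rests.
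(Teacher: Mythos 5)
Your argument is correct and all the steps you invoke are covered by the paper's lemmas; the difference from the paper's proof lies in the organization rather than the substance. The paper proves (i) first, via an induction showing that every admissible finite intersection of balls lies in $\mathcal{E}_2(X)$ (Lemma~\ref{Lem:intersection of three almost externally hyperconvex sets} for non-emptiness, Lemma~\ref{Lem:intersection of externally 3-hyperconvex is externally 2-hyperconvex} to stay in $\mathcal{E}_2(X)$), and then feeds this into the induction for (ii), where the triple $\bigcap_{i\le n}B(x_i,r_i)$, $B(x_{n+1},r_{n+1})$, $A$ is handled by the triple-intersection lemma. You instead prove (ii) directly by peeling off one ball: you pass to $A^{\ast}=A\cap B(y_{n+1},s_{n+1})\in\mathcal{E}_2(X)$ and verify, using external $2$-hyperconvexity of $A$, that the remaining balls still reach $A^{\ast}$ — this verification is indeed the hinge, and it is sound. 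Then (i) is the special case $A=X$, and (iii) follows from your explicit finite Helly statement for pairwise intersecting members of $\mathcal{E}_2(X)$, which the paper leaves implicit in its one-line justification of (iii). Your scheme buys a cleaner logical order ((i) and (iii) become corollaries of (ii) plus Helly) and an inductive step for (ii) that only needs the two-set lemma; the paper's scheme buys the auxiliary fact that finite ball intersections are themselves externally $2$-hyperconvex, which it reuses in its own proof of (ii). Your preliminary facts are all justified: the identification $\mathcal{\tilde{E}}_3(X)=\mathcal{E}_3(X)$ for closed subsets follows from Proposition~\ref{Prop:almost n-hyperconvex}(i), and external $n$-hyperconvexity of $X$ in itself is exactly $n$-hyperconvexity.
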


\begin{proof}
	In order to show (i), we will prove by induction that the following claim is true.
	\begin{Cl*}
		For $\{ B(x_i,r_i) \}_{i=1}^n$ with $d(x_i,x_j) \leq r_i + r_j$, we have $\bigcap_{i=1}^n B(x_i,r_i) \in \mathcal{E}_2(X)$.
	\end{Cl*}
	This clearly holds for $n=2$. For $n \geq 2$, consider $\{ B(x_i,r_i) \}_{i=1}^{n+1}$ with $d(x_i,x_j) \leq r_i + r_j$. Observe that $B(x_1,r_1), B(x_2,r_2) \in \mathcal{E}_3(X)$ and by the induction hypothesis $A = \bigcap_{i=3}^{n+1} B(x_i,r_i) \in \mathcal{E}_2(X)$ and $B(x_i,r_i) \cap A \neq \emptyset$ for $i=1,2$. Hence by Lemma~\ref{Lem:intersection of three almost externally hyperconvex sets} we get $$\bigcap_{i=1}^{n+1} B(x_i,r_i) = B(x_1,r_1) \cap B(x_2,r_2) \cap A \neq \emptyset.$$ 
	Again by the induction hypothesis we have $A' := \bigcap_{i=1}^n B(x_i,r_i) \in \mathcal{E}_2(X)$ and therefore we conclude that
	$$\bigcap_{i=1}^{n+1} B(x_i,r_i) = A' \cap B(x_{n+1},r_{n+1}) \in \mathcal{E}_2(X)$$
by	Lemma~\ref{Lem:intersection of externally 3-hyperconvex is externally 2-hyperconvex}. 

	For (ii), we also do induction on $n$. Let $\{ B(x_i,r_i) \}_{i=1}^{n+1}$ be a collection of balls with $d(x_i,x_j) \leq r_i + r_j$ and $d(x_i,A) \leq r_i$. We have $A_0 := \bigcap_{i=1}^n B(x_i,r_i)$, $A_1 := B(x_{n+1},r_{n+1}) \in \mathcal{E}_3(X)$ by (i) and $A_0 \cap A \neq \emptyset$ by the induction hypothesis. Hence we get $$\bigcap_{i=1}^{n+1} B(x_i,r_i) \cap A = A_0 \cap A_1 \cap A \neq \emptyset$$ by Lemma~\ref{Lem:intersection of three almost externally hyperconvex sets}.
	
	Finally, statement (iii) is a consequence of (ii), Lemma~\ref{Lem:intersection of three almost externally hyperconvex sets} and Lemma~\ref{Lem:intersection of externally 3-hyperconvex is externally 2-hyperconvex}.
\end{proof}

\begin{proof}[Proof of Theorem~\ref{Thm:FourToFinite}.]
	Statement (i) was shown in Proposition~\ref{Prop:4-hyperconvex is n-hyperconvex}(i).
	Therefore, we will start proving (ii). Let $A \in \mathcal{E}_4(X)$ and let $\{ B(x_i,r_i) \}_{i=1}^n$ be a collection of balls with $d(x_i,x_j) \leq r_i + r_j$ and $d(x_i,A) \leq r_i$. Define $A_i := B(x_i,r_i) \cap A \in \mathcal{E}_3(A)$. Moreover, we have $A_i \cap A_j = B(x_i,r_i) \cap B(x_j,r_j) \cap A \neq \emptyset$ and therefore we get $$\bigcap_{i=1}^n B(x_i,r_i) \cap A = \bigcap_{i=1}^n A_i \neq \emptyset$$ by Proposition~\ref{Prop:4-hyperconvex is n-hyperconvex}(iii). This is $A \in \mathcal{E}_n(X)$.
	
	Finally, let $A \in \mathcal{W}_4(X)$ and let $\{ B(x_i,r_i) \}_{i=1}^{n-1}$ be a collection of balls in $A$ with $d(x_i,x_j) \leq r_i + r_j$ and let $x \in X$, $r \geq 0$ with $d(x,A) \leq r$, $d(x,x_i) \leq r + r_i$. Then we have $A':= A \cap B(x,r) \in \mathcal{E}_3(A)$ and therefore $A' \in \mathcal{E}_{n-1}(A)$ by Proposition~\ref{Prop:4-hyperconvex is n-hyperconvex}(ii). Moreover, $B(x_i,r_i) \cap A' \neq \emptyset$ and therefore $$\bigcap_{i=1}^{n-1} B(x_i,r_i) \cap B(x,r) \cap A = \bigcap_{i=1}^{n-1} B(x_i,r_i) \cap A' \neq \emptyset.$$
	This proves $A \in \mathcal{W}_n(X)$ and hence establishes (iii).
\end{proof}

\begin{Expl}\label{Expl:externally n-hyperconvex but not hyperconvex}
	Consider $c_0 \subset l_\infty(\mathbb{N})$, the subspace of all null sequences. As it is mentioned in \cite{Lin}, $c_0$ is finitely hyperconvex but not hyperconvex. Moreover, $c_0$ is externally $n$-hyperconvex in $l_\infty(\mathbb{N})$ for every $n$. Indeed, we will show that $c_0 \in \mathcal{E}_2(l_\infty(\mathbb{N}))$ and then use Proposition~\ref{Prop:4-hyperconvex is n-hyperconvex}(ii). Let $\mathbf{x}:=(x_n)_{n \in \mathbb{N}}$, $\mathbf{y}:=(y_n)_{n \in \mathbb{N}} \in l_\infty(\mathbb{N})$ with $d_\infty(\mathbf{x},\mathbf{y}) \leq r + s$, $d_\infty(\mathbf{x},c_0) \leq r$  and $d_\infty(\mathbf{y},c_0) \leq s$. For every $n \in \mathbb{N}$, choose some
	$$z_n \in B(x_n,r) \cap B(y_n,s) \text{ with } |z_n|= \inf \{|\zeta| : \zeta \in B(x_n,r) \cap B(y_n,s)\}.$$
	Define $\mathbf{z}:=(z_n)_{n \in \mathbb{N}}$. Clearly, we have $d(x_n,z_n) \leq r$ and $d(y_n,z_n) \leq s$. Moreover, $\lim_{n \to \infty} z_n = 0$, since $\limsup_{n \to \infty} |x_n| \leq r$ and $\limsup_{n \to \infty} |y_n| \leq s$. Hence 
	$$\mathbf{z} \in B(\mathbf{x},r) \cap B(\mathbf{y},s) \cap c_0 \neq \emptyset.$$
	This shows that $c_0$ is externally $2$-hyperconvex in $l_\infty(\mathbb{N})$ and therefore it is externally $n$-hyperconvex in $l_\infty(\mathbb{N})$ for every $n$.
\end{Expl}


\section{Convexity} \label{Sec:Convexity}


The goal of this section is to prove $\sigma$-convexity for (weakly) externally hyperconvex subsets and therefore establish the implications $(i) \Rightarrow (ii)$ in Theorem~\ref{Thm:BicombingTheorem}. Afterwards, we conclude this section with the proof of Theorem~\ref{Thm:HellyWEH}.

\begin{Prop} \label{Prop:convexity of EH}
Suppose that $X$ is a metric space with a geodesic bicombing $\sigma$ such that the geodesics $\sigma_{xy}$ are straight curves. Moreover, let $E \in \mathcal{E}_2(X)$. Then $E$ is $\sigma$-convex.
\end{Prop}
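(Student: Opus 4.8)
The plan is to fix $x,y\in E$, a parameter $t\in[0,1]$, and to show that $z:=\sigma_{xy}(t)$ lies in $E$; since $E\in\mathcal{E}_2(X)$ is in particular proximinal (hence externally $1$-hyperconvex) and therefore closed, it suffices to produce points of $E$ arbitrarily close to $z$. Throughout I would write $L:=d(x,y)$ and exploit the single most useful consequence of straightness: for every $w\in X$ the convexity of $s\mapsto d(w,\sigma_{xy}(s))$ yields $d(w,z)\le(1-t)\,d(w,x)+t\,d(w,y)$. The same inequality applied to sub-geodesics lets me compare auxiliary bicombing midpoints with $z$.

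First I would manufacture a starting point on the metric interval. Applying external $2$-hyperconvexity of $E$ to the balls $B(x,tL)$ and $B(y,(1-t)L)$ --- which is legitimate because $x,y\in E$ and $d(x,y)=tL+(1-t)L$ --- produces $q_0\in E$ with $d(x,q_0)\le tL$ and $d(y,q_0)\le(1-t)L$, whence equality holds in both and $q_0$ sits on $I(x,y)$ at parameter $t$. The straightness inequality with $w=q_0$ then bounds $\delta_0:=d(q_0,z)\le 2t(1-t)L$. The core of the argument is an iteration that halves this error. Given $q_n\in E$ on $I(x,y)$ at parameter $t$ with $\delta_n:=d(q_n,z)$, I would pass to the bicombing midpoint $m_n:=\sigma_{q_nz}(1/2)$; applying straightness of $\sigma_{q_nz}$ with respect to $x$ and to $y$ shows $d(x,m_n)\le tL$ and $d(y,m_n)\le(1-t)L$, so $m_n$ again lies on $I(x,y)$ at parameter $t$, while $d(m_n,z)=\delta_n/2$ and $d(m_n,E)\le d(m_n,q_n)=\delta_n/2$. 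A further application of external $2$-hyperconvexity, with one ball centred at $m_n$ of radius $\delta_n/2$ together with the interval-pinning balls, yields $q_{n+1}\in E$ on $I(x,y)$ at parameter $t$ with $\delta_{n+1}=d(q_{n+1},z)\le\delta_n/2$.

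Iterating, $\delta_n\to 0$, and the estimate $d(q_n,q_{n+1})\le d(q_n,m_n)+d(m_n,q_{n+1})$ makes $(q_n)_n$ a Cauchy sequence; its limit lies in $E$ because $E$ is closed and equals $z$ because $\delta_n\to 0$. I expect the delicate point to be the extraction of $q_{n+1}$: external $2$-hyperconvexity only controls intersections of two balls, and by itself the triangle inequality does not force a point of $E$ near $m_n$ to move toward the specific bicombing point $z$ rather than to some other point of the (possibly fat) metric interval. Closing this gap is exactly where straightness must be used quantitatively --- via the convex-combination bound above, which pins an interval point at ratio $r\!:\!s$ to within $2rs/(r+s)$ of the corresponding bicombing point --- to guarantee that each step genuinely contracts the distance to $z$. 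Verifying this contraction, and checking that the auxiliary midpoints remain admissible centres for the ball families, is the main obstacle.
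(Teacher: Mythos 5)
Your overall strategy (directly approximating $z=\sigma_{xy}(t)$ by points of the closed set $E$) differs from the paper's, which argues by contradiction: it takes the \emph{first} parameter $s$ realizing the maximal distance $R=\max_{t} d(\sigma_{xy}(t),E)>0$ on a maximal subarc missing $E$, sets $z:=\sigma_{xy}(s)$ and $z^{\pm}:=\sigma_{xy}(s\pm\varepsilon)$ with $R^{\pm}:=d(z^{\pm},E)$ satisfying $R^-<R$, $R^+\le R$ and $R^{\pm}>R/2$, applies external $2$-hyperconvexity to the \emph{two} balls $B(z^-,R^-)$, $B(z^+,R^+)$, and then uses convexity of $u\mapsto d(e,\sigma_{xy}(u))$ at the midpoint to get $d(z,E)\le\tfrac{1}{2}(R^-+R^+)<R$, a contradiction. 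Your route could in principle be made to work, but as written the inductive step has a genuine gap, which you have correctly located without closing.

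Concretely, two things fail. First, producing $q_{n+1}\in E\cap B(x,tL)\cap B(y,(1-t)L)\cap B(m_n,\delta_n/2)$ is an intersection of $E$ with \emph{three} balls, which external $2$-hyperconvexity does not provide; you would need $E\in\mathcal{E}_3(X)$, which is not assumed. Second, and more seriously, even granting such a $q_{n+1}$, the claimed contraction $\delta_{n+1}\le\delta_n/2$ does not follow: from $d(q_{n+1},m_n)\le\delta_n/2$ and $d(m_n,z)=\delta_n/2$ the triangle inequality gives only $d(q_{n+1},z)\le\delta_n$, while the ``convex-combination bound'' you invoke (straightness of $\sigma_{xy}$ applied to a point of $I(x,y)$ at parameter $t$) yields only the $n$-independent estimate $d(q_{n+1},z)\le 2t(1-t)L$; neither improves from one step to the next, so $\delta_n$ need not tend to $0$. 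The missing idea is exactly the paper's mechanism for a \emph{strict} decrease: center the two balls at geodesic points $z^{\pm}$ straddling the worst point, with radii equal to their distances to $E$, at least one of which is strictly smaller than $R$ by the first-maximizer choice, and then exploit convexity of $d(e,\sigma_{xy}(\cdot))$ at the midpoint. Balls centered at the far-away endpoints $x,y\in E$ cannot produce any contraction, because every point of $I(x,y)$ at parameter $t$ satisfies the same bound relative to them.
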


\begin{proof}
Assume by contradiction that $E$ is not $\sigma$-convex. Then there are $x,y \in E$ such that 
\[
\sigma_{xy}([0,1]) \nsubseteq E.
\]
Hence there are $0 \leq t_1 < t_2 \leq 1$ such that $\sigma_{xy}([t_1,t_2]) \cap E = \{\bar{x},\bar{y} \}$ with $\bar{x} = \sigma_{xy}(t_1)$ and $\bar{y} = \sigma_{xy}(t_2)$.
Let
\[
R := \max_{t \in [t_1,t_2]} d(\sigma_{xy}(t),E) > 0.
\]
Define
\[
s := \min \{ t' \in [t_1,t_2] : d(\sigma_{xy}(t'),E) = R \} \text{ and } z:= \sigma_{xy}(s).
\]
In particular, we have $d(E, z) = R$. For an arbitrarily chosen $\eps \in (0, \frac{R}{2 d(x,y)})$,
we set $z^-:= \sigma_{xy}(s-\eps)$ and $z^+:= \sigma_{xy}(s+\eps)$. We then have $R^-:= d(z^-,E) < R$ and $R^+:= d(z^+,E) \le R$. Moreover, by the choice of $\eps$ we get 
$$R^-= d(z^-,E) \geq d(z,E) - d(z,z^-) = R - \eps d(x,y) > \tfrac{R}{2}$$
and similarly $R^+ > \frac{R}{2}$. In particular, we have 
$$d(z^-,z^+) = 2 \eps d(x,y) < \tfrac{R}{2} + \tfrac{R}{2} < R^- + R^+$$
and therefore by external 2-hyperconvexity of $E$ we can pick
\[
e \in E \cap B(z^-, R^-) \cap B(z^+, R^+).
\]
Now, since the curve $t \mapsto \sigma_{xy}(t)$ is straight in $X$, it follows that
\[
d(E, \sigma_{xy}(s))
\le d(e, \sigma_{xy}(s))  
\le \tfrac{1}{2}\left(d(e,z^-) + d(e,z^+) \right) 
\le \tfrac{1}{2}\left(R^- + R^+ \right)
< R
\]
which is a contradiction to the definition of $s$. This shows that $E$ is $\sigma$-convex.
\end{proof}

\begin{Prop} \label{Prop:convexity of WEH}
Suppose that $X$ is a metric space with unique straight curves and let $\sigma^X$ be a convex geodesic bicombing on $X$. Assume moreover that $W \in \mathcal{W}_3(X)$ and $W$ possesses a consistent geodesic bicombing $\sigma^W$. Then it follows that $\sigma^W = \sigma^X|_{W \times W \times [0,1]}$. In particular, $W$ is $\sigma^X$-convex.
\end{Prop}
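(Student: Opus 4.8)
The plan is to reduce the whole statement to one assertion: for all $p,q \in W$, the geodesic $\gamma := \sigma^W_{pq}$ is a \emph{straight curve in $X$}. Granting this, everything follows at once, since $\sigma^X_{pq}$ is straight in $X$ (being a geodesic of the convex bicombing $\sigma^X$), so uniqueness of straight curves in $X$ forces $\sigma^W_{pq} = \sigma^X_{pq}$, i.e. $\sigma^W = \sigma^X|_{W\times W\times[0,1]}$; and because $\sigma^W_{pq}([0,1]) \subseteq W$ this also gives $\sigma^X_{pq}([0,1]) \subseteq W$, the asserted $\sigma^X$-convexity. For this reduction I only use that a consistent bicombing is convex, so that each $\gamma$ is straight \emph{inside} $W$ (i.e. $t\mapsto d(e,\gamma(t))$ is convex for every $e\in W$), together with the fact that $W \in \mathcal{W}_3(X)\subseteq \mathcal{W}_1(X)$ is proximinal.

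To show $\gamma$ is straight in $X$ I fix $z\in X$ and prove $\phi(t):=d(z,\gamma(t))$ is convex. If it were not, I pass by a convex-hull reduction to a minimal subinterval $[t_1,t_2]$ on which $\phi$ lies strictly above its chord $\ell$ and meets $\ell$ at the endpoints. Let $s$ be the \emph{leftmost} maximizer of the gap $D:=\phi-\ell$, with value $R>0$; set $v:=\gamma(s)$ and $v^\pm:=\gamma(s\pm\epsilon)$ for small $\epsilon>0$, so that $R^-:=D(s-\epsilon)<R$ and $R^+:=D(s+\epsilon)\le R$, while $R^\pm>0$ by minimality. The decisive step is to feed the three balls $B(z,\tau)$, $B(v^-,\rho^-)$, $B(v^+,\rho^+)$, with $\tau:=d(z,W)$ and $\rho^\pm:=d(z,v^\pm)-\tau$, into the external $3$-hyperconvexity of $W$ in $W\cup\{z\}$. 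The hypotheses check out: the centres lie in $W\cup\{z\}$; $d(z,W)=\tau$ and $d(v^\pm,W)=0\le\rho^\pm$ (nonnegativity since $\gamma\subseteq W$ forces $\phi\ge d(z,W)$); the mixed conditions $d(z,v^\pm)=\tau+\rho^\pm$ hold by definition; and $d(v^-,v^+)=2\epsilon\,d(p,q)\le\rho^-+\rho^+$ for $\epsilon$ small. This yields $e\in W$ with $d(z,e)\le\tau$ and $d(e,v^\pm)\le\rho^\pm$.

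Now straightness of $\gamma$ in $W$ applied at $e$, with $v$ the midpoint of $v^\pm$, gives $d(e,v)\le\tfrac12(d(e,v^-)+d(e,v^+))\le\tfrac12(\rho^-+\rho^+)$, whence
\[
d(z,v)\le d(z,e)+d(e,v)\le \tau+\tfrac12(\rho^-+\rho^+)=\ell(s)+\tfrac{R^-+R^+}{2}<\ell(s)+R=d(z,v),
\]
a contradiction. The middle equality is just $d(z,v^-)+d(z,v^+)=2\ell(s)+R^-+R^+$ (affinity of $\ell$), and the final strict inequality is exactly $R^-<R$. Hence $\phi$ is convex, $\gamma$ is straight in $X$, and the reduction above finishes the proof.

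I expect the main obstacle to lie in the allocation of radii in the three-ball step. Naive choices — controlling $d(e,v)$ by centring the first ball at $v$, or placing the outer balls at the endpoints $p,q$ — leave a residual slack proportional to $\epsilon\,|d(z,p)-d(z,q)|$ or to $d(z,W)$, which does not close the estimate. The trick is to absorb \emph{all} slack into $\rho^\pm$ by taking $\tau=d(z,W)$ exactly, so the two mixed ball-conditions are equalities and their averaged radii collapse precisely to $\ell(s)$; the remaining room $R-\tfrac12(R^-+R^+)>0$ is then supplied solely by choosing the leftmost deepest point, in the spirit of the proof of Proposition~\ref{Prop:convexity of EH}. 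This also explains why \emph{three} balls are exactly right: one ball carries the single external witness $z$ and the other two sit at the in-$W$ perturbations $v^\pm$, matching the hypothesis $W\in\mathcal{W}_3(X)$.
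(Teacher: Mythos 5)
Your proof is correct, but it follows a genuinely different route from the paper's. The paper argues directly: for $z\in X$ with $s=d(z,W)$ it projects $w,w'$ onto points $\bar w,\bar w'\in B(z,s)\cap W$ lying on metric segments from $z$ (using weak external $2$-hyperconvexity), observes that $B(z,s)\cap W\in\mathcal{E}_2(W)$ -- this is where $\mathcal{W}_3(X)$ enters -- and hence is $\sigma^W$-convex by Proposition~\ref{Prop:convexity of EH}, so that $d(z,\sigma^W_{\bar w\bar w'}(t))=s$; combining this with the convexity of $\sigma^W$ gives the endpoint inequality $d(z,\sigma^W_{ww'}(t))\le(1-t)d(z,w)+td(z,w')$, and consistency of $\sigma^W$ then upgrades this to full convexity of $t\mapsto d(z,\sigma^W_{ww'}(t))$, i.e.\ straightness in $X$. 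You instead prove full convexity of $\phi(t)=d(z,\gamma(t))$ in one stroke by contradiction at a leftmost deepest point above the chord, transplanting the mechanism of Proposition~\ref{Prop:convexity of EH} rather than quoting it: your triple of balls $B(z,\tau)$, $B(v^\pm,\rho^\pm)$ with $\tau=d(z,W)$ and $\rho^\pm=d(z,v^\pm)-\tau$ satisfies all the compatibility conditions for weak external $3$-hyperconvexity (the only point needing a word is that $\rho^-+\rho^+\to 2(d(z,v)-\tau)\ge 2R>0$ as $\epsilon\to 0$, so $d(v^-,v^+)=2\epsilon\,d(p,q)\le\rho^-+\rho^+$ for small $\epsilon$), and the resulting witness $e$ together with straightness of $\gamma$ inside $W$ (a consequence of consistency, hence convexity, of $\sigma^W$) closes the strict inequality. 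Both proofs use the hypotheses in the same places; yours trades the projection-to-sphere step and the reliance on Proposition~\ref{Prop:convexity of EH} and on the consistency-based upgrade for a standard chord/deepest-point reduction, which arguably makes the role of the three balls (one external center, two on the geodesic) more transparent.
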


\begin{proof}
We claim that the geodesics of $\sigma^W$ are straight curves in $X$.
Consider $z \in X$ and let $s:=d(z,W)$. For $w , w' \in W$, let $\bar{w} \in B(z,s) \cap W$ and $\bar{w}' \in B(z,s) \cap W$ be such that 
\begin{align*}
s + d(\bar{w},w) &= d(z,\bar{w}) + d(\bar{w},w)=d(z,w) \ \text{ and } \  \\
s + d(\bar{w}',w') &= d(z,\bar{w}') + d(\bar{w}',w')=d(z,w').
\end{align*}
We have 
\begin{equation}\label{Eq:convexity of WEH}
d(z,\sigma^W_{ww'}(t))
\le d(z,\sigma^W_{\bar{w}\bar{w}'}(t)) + d(\sigma^W_{\bar{w}\bar{w}'}(t),\sigma^W_{ww'}(t)). 
\end{equation}
To bound the first term on the right-hand side of \eqref{Eq:convexity of WEH}, note that $E := B(z,s) \cap W \in \mathcal{E}_2(W)$ and thus $E$ is in particular $\sigma^W$-convex by Proposition~\ref{Prop:convexity of EH}, that is $d(z,\sigma^W_{\bar{w}\bar{w}'}(t))=s$. On the other hand, to bound the second term on the right-hand side of \eqref{Eq:convexity of WEH}, note that by convexity of $\sigma^W$, we get 
\[
d(\sigma^W_{\bar{w}\bar{w}'}(t),\sigma^W_{ww'}(t))
\le (1-t) d(\bar{w},w) + t d(\bar{w}',w').
\]
Putting those two estimates together, we obtain that
\[
d(z,\sigma^W_{ww'}(t)) \le (1-t) d(z,w) + t d(z,w').
\]
It follows that the consistent geodesic bicombing $\sigma^W$ consists of straight curves in $X$. Since straight curves in $X$ are unique, $\sigma^W$ must coincide with the restriction  $\sigma^X|_{W \times W \times [0,1]}$. Therefore $W$ is $\sigma^X$-convex.
\end{proof}

This result was already announced in the proof of \cite[Proposition~4.8]{MieP}.

\begin{Rem} \label{Rem:LinearOnCells}
In \cite{Lan}, Lang proves that the injective hull of certain discretely geodesic metric spaces (including hyperbolic groups) has the structure of a locally finite polyhedral complex of finite combinatorial dimension. The cells are weakly externally hyperconvex subsets of this complex \cite[Remark~4.3]{MieP} and from Proposition \ref{Prop:convexity of WEH} it follows now that the unique consistent convex geodesic bicombing on this complex is linear inside the cells.
\end{Rem}

\begin{Lem}\label{Lem:DinstanceToConvexSet}
	Let $X$ be a metric space with a geodesic bicombing $\sigma$ and let $A \subset X$ be $\sigma$-convex. Then the following holds:
	\begin{enumerate}[(i)]
	\item For all $x,y \in X$ and $t \in [0,1]$ we have $d(\sigma_{xy}(t),A) \leq (1-t)d(x,A) + t d(y,A)$.
	\item If $\sigma$ is consistent, then for all $x,y \in X$ the function $t \mapsto d(\sigma_{xy}(t),A)$ is convex.
\end{enumerate}	
\end{Lem}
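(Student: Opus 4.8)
The plan is to prove (i) directly from the $\sigma$-convexity of $A$ together with the weak convexity assumption built into $\sigma$, and then to deduce (ii) from (i) by using consistency to reparametrize subsegments of $\sigma_{xy}$.

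For (i), I fix $x,y \in X$, $t \in [0,1]$ and $\eps > 0$, and choose $a,b \in A$ with $d(x,a) \le d(x,A) + \eps$ and $d(y,b) \le d(y,A) + \eps$. Since $a,b \in A$ and $A$ is $\sigma$-convex, we have $\sigma_{ab}(t) \in A$, so the weak convexity assumption applied to the pairs $(x,y)$ and $(a,b)$ gives
\[
d(\sigma_{xy}(t),A) \le d(\sigma_{xy}(t),\sigma_{ab}(t)) \le (1-t)\,d(x,a) + t\,d(y,b) \le (1-t)\,d(x,A) + t\,d(y,A) + \eps.
\]
Letting $\eps \to 0$ yields (i).

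For (ii), fix $x,y \in X$ and set $f(t) := d(\sigma_{xy}(t),A)$; convexity amounts to $f((1-\lambda)s + \lambda t) \le (1-\lambda)f(s) + \lambda f(t)$ for all $0 \le s \le t \le 1$ and $\lambda \in [0,1]$. The crucial step is to show, with $x' := \sigma_{xy}(s)$ and $y' := \sigma_{xy}(t)$, that
\[
\sigma_{x'y'}(\lambda) = \sigma_{xy}\big((1-\lambda)s + \lambda t\big) \quad \text{for all } \lambda \in [0,1].
\]
Here consistency enters: it guarantees $\sigma_{x'y'}([0,1]) \subset \sigma_{xy}([0,1])$, so $\sigma_{x'y'}(\lambda) = \sigma_{xy}(u(\lambda))$ for some $u(\lambda)$. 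Assuming $x \neq y$ and $s < t$ (the remaining cases being trivial), $\sigma_{xy}$ is injective, so $u = (\sigma_{xy})^{-1} \circ \sigma_{x'y'}$ is continuous and injective, hence strictly monotone; comparing speeds via $|u(\lambda) - s|\,d(x,y) = d(x',\sigma_{x'y'}(\lambda)) = \lambda(t-s)\,d(x,y)$ together with $u(0) = s$ and $u(1) = t$ forces $u(\lambda) = (1-\lambda)s + \lambda t$. Granting this, I apply (i) to the pair $(x',y')$:
\[
f\big((1-\lambda)s + \lambda t\big) = d(\sigma_{x'y'}(\lambda),A) \le (1-\lambda)\,d(x',A) + \lambda\,d(y',A) = (1-\lambda)f(s) + \lambda f(t),
\]
which is the asserted convexity.

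The one genuinely delicate point is the reparametrization identity in (ii): consistency a priori only controls the \emph{image} of $\sigma_{x'y'}$, and one must upgrade this to an equality of constant-speed parametrizations. This is exactly the place where consistency (and not merely $\sigma$-convexity) is used, so I would present the speed-matching and monotonicity argument explicitly rather than leaving it implicit.
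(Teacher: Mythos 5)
Your proof is correct and follows essentially the same route as the paper's: pick near-optimal points in $A$, use $\sigma$-convexity to keep $\sigma_{ab}(t)$ (resp.\ $\sigma_{pq}(t)$) inside $A$, apply the weak convexity inequality of the bicombing, and use consistency to identify $\sigma_{x'y'}(\lambda)$ with $\sigma_{xy}((1-\lambda)s+\lambda t)$. The only difference is that you prove the reparametrization identity explicitly (via the speed-matching argument), whereas the paper writes $d(\sigma_{xy}((1-t)s_1+ts_2),A)\le d(\sigma_{\sigma_{xy}(s_1)\sigma_{xy}(s_2)}(t),\sigma_{pq}(t))$ and leaves that identification implicit.
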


\begin{proof}
	We prove (ii). For $s_1,s_2 \in [0,1]$ and $\epsilon > 0$, there are $p,q \in A$ with $d(\sigma_{xy}(s_1),p) \leq d(\sigma_{xy}(s_1),A) + \epsilon$ and $d(\sigma_{xy}(s_2),q) \leq d(\sigma_{xy}(s_2),A) + \epsilon$. We get
\begin{align*}
	d(\sigma_{xy}((1-t)s_1 + t s_2) , A) &\leq d(\sigma_{\sigma_{xy}(s_1)\sigma_{xy}(s_2)}(t),\sigma_{pq}(t)) \\
	&\leq (1-t) d(\sigma_{xy}(s_1),A) + t d(\sigma_{xy}(s_2),A) + \epsilon
\end{align*}	
	for all $\epsilon>0$. Finally, let $\epsilon \downarrow 0$.
\end{proof}

\begin{Cor}\label{Cor:DistanceToWEHSet}
	Suppose that $X$ is a proper metric space of finite combinatorial dimension which admits a consistent geodesic bicombing $\sigma$. Then for all $x,y \in X$ and $W \in \mathcal{W}_3(X)$ the function $t \mapsto d(\sigma_{xy}(t),W)$ is convex.
\end{Cor}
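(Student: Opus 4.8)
The plan is to deduce the statement from Lemma~\ref{Lem:DinstanceToConvexSet}(ii), whose sole hypothesis is that $W$ be $\sigma$-convex: once we know $W$ is $\sigma$-convex, consistency of $\sigma$ yields immediately that $t \mapsto d(\sigma_{xy}(t),W)$ is convex for all $x,y \in X$. So the whole proof reduces to establishing $\sigma$-convexity of $W$.

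To obtain $\sigma$-convexity I would invoke Proposition~\ref{Prop:convexity of WEH} with $\sigma^X = \sigma$. Its hypotheses are met as follows: $X$ has unique straight curves because it has finite combinatorial dimension; $\sigma$ is a convex geodesic bicombing since every consistent bicombing is convex; and $W \in \mathcal{W}_3(X)$ by assumption. The only hypothesis that is not immediate is that $W$ itself \emph{possesses a consistent geodesic bicombing} $\sigma^W$, and producing this is where the real work lies.

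For the existence of $\sigma^W$ I would first record the elementary structural facts about $W$. Since $\mathcal{W}_3(X) \subset \mathcal{W}_1(X)$ (duplicating a ball shows $\mathcal{E}_n \subset \mathcal{E}_{n-1}$, hence $\mathcal{W}_n \subset \mathcal{W}_{n-1}$), the set $W$ is proximinal and therefore closed; as a closed subset of the proper space $X$ it is proper and complete, and it inherits finite combinatorial dimension. Moreover, testing weak external $3$-hyperconvexity with an external point lying in $W$ shows that $W$ is $3$-hyperconvex, hence metrically convex and modular by Proposition~\ref{Prop:3-hyperconvex}. Granting that $W$ carries \emph{some} conical geodesic bicombing, \cite[Theorem~1.1]{DesL} upgrades it to a convex one, and finite combinatorial dimension forces this convex bicombing to be unique and consistent; this is the desired $\sigma^W$.

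Thus the main obstacle is exhibiting a conical geodesic bicombing on $W$ (properness and geodesicity alone do not suffice, as e.g. the circle shows, a proper geodesic space carrying no conical bicombing because such a bicombing would contract it to a point). The natural candidate is the push-forward of the ambient bicombing: if $r \colon X \to W$ is a $1$-Lipschitz retraction, then $(w,w',t) \mapsto r(\sigma_{ww'}(t))$ is readily checked to be symmetric, to be a constant-speed geodesic between its endpoints (the nonexpansiveness estimates $d(w,r\sigma_{ww'}(t)) \le t\,d(w,w')$ and $d(r\sigma_{ww'}(t),w') \le (1-t)\,d(w,w')$ must both be equalities), and to satisfy the conical inequality because $r$ is $1$-Lipschitz and $\sigma$ is conical. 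Hence everything comes down to producing such a retraction from the weak external $3$-hyperconvexity of $W$; this is the step I expect to be delicate, and once it is in place the chain Proposition~\ref{Prop:convexity of WEH} $\Rightarrow$ Lemma~\ref{Lem:DinstanceToConvexSet}(ii) closes the argument.
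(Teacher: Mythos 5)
Your overall route is exactly the one the paper intends: the Corollary is stated without proof immediately after Lemma~\ref{Lem:DinstanceToConvexSet} precisely because it is meant to follow by feeding Proposition~\ref{Prop:convexity of WEH} (finite combinatorial dimension gives unique straight curves, consistency of $\sigma$ gives convexity) into Lemma~\ref{Lem:DinstanceToConvexSet}(ii). Your auxiliary observations are also correct: $W$ is proximinal hence closed, hence proper and complete, it inherits finite combinatorial dimension, and the push-forward of $\sigma$ under a $1$-Lipschitz retraction $r\colon X \to W$ is a conical geodesic bicombing on $W$, which \cite[Theorem~1.1]{DesL} and finite combinatorial dimension would upgrade to the required consistent one.

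However, the proof as written has a genuine gap, and you acknowledge it yourself: you never produce the $1$-Lipschitz retraction onto $W$ (nor any other conical geodesic bicombing on $W$), and this cannot be waved through with the tools available. Esp\'inola's characterization of proximinal $1$-Lipschitz retracts requires $W$ to be weakly externally hyperconvex for \emph{all} $n$ and the ambient space to be hyperconvex, neither of which is assumed here; and Theorem~\ref{Thm:FourToFinite}(iii) only collapses the hierarchy $\mathcal{W}_n$ from $n=4$ upward, so $W \in \mathcal{W}_3(X)$ does not give $W \in \mathcal{W}_n(X)$ for all $n$ (recall $l_1^3$ shows the $3$-to-$4$ step genuinely fails). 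The standard Zorn-plus-compactness construction of a $1$-Lipschitz retraction requires, at each extension step, a non-empty intersection of $W$ with arbitrarily many balls centered in $W$ together with one external ball, i.e.\ weak external $n$-hyperconvexity for every $n$; weak external $3$-hyperconvexity only controls two such balls at a time. Your fallback facts about $W$ ($3$-hyperconvex, hence metrically convex and modular, hence geodesic when proper) do not help either, since, as your own circle example shows, a proper geodesic space need not carry any conical bicombing. So the step you flag as ``delicate'' is in fact the entire content of the Corollary beyond Proposition~\ref{Prop:convexity of WEH} and Lemma~\ref{Lem:DinstanceToConvexSet}, and it remains open in your write-up.
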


\begin{proof}[Proof of Theorem~\ref{Thm:HellyWEH}]
By Proposition~\ref{Prop:convexity of WEH}, weakly externally hyperconvex subsets of $l_{\infty}^n$ are convex in the usual sense. By the classical Helly theorem, it follows that $\mathcal{W}(l_{\infty}^n)$ is a Helly family of order $n+1$. It thus remains to be proved that $n+1$ is optimal. In other words, we need to find $\mathcal{A} := \{A_1,\dots,A_{n+1}\}\subset \mathcal{W}(l_{\infty}^n)$ such that 
\begin{enumerate}[(a)]
\item one has $\bigcap_{i=1}^{n+1} A_i = \emptyset$ and 
\item for any $j \in \{1,\dots,n+1\}$, one has $ B_j := \bigcap_{A \in \mathcal{A} \setminus \{A_j\}} A \neq \emptyset$.
\end{enumerate}
We can define the following half-spaces
\begin{align*}
A_1 &:= \{ x \in \R^n : x_n \ge 0\}, \\
A_2 &:= \{ x \in \R^n : x_{n-1} - x_n \ge 0\}, \\
A_3 &:= \{ x \in \R^n : x_{n-2} - x_{n-1} \ge 0\}, \\
& \ \ \vdots \\
A_{n-1} &:= \{ x \in \R^n : x_2 - x_3 \ge 0\}, \\
A_n &:= \{ x \in \R^n : x_1 - x_2 \ge 0\}, \\
A_{n+1} &:= \{ x \in \R^n : x_1 + x_2 \le -1 \}.
\end{align*}
Note if $x \in \bigcap_{i=1}^{n} A_i$, then $x_1,\dots,x_n \ge 0$ and thus $\bigcap_{i=1}^{n+1} A_i = \emptyset$ as desired. Moreover, note that
\begin{align*}
(\phantom{-}0,-5,-5, \dots,-5,-5,-5)  &\in B_1, \\
(\phantom{-}0,-5,-5, \dots,-5,-5,\phantom{-}0)  &\in B_2, \\
(-5,-5,-5,\dots,-5,\phantom{-}0,\phantom{-}0)  &\in B_3, \\
& \ \  \vdots \\
(-5,-5,\phantom{-}0, \dots,\phantom{-}0,\phantom{-}0,\phantom{-}0)  &\in B_{n-1}, \\
(-5,\phantom{-}0,\phantom{-}0, \dots,\phantom{-}0,\phantom{-}0,\phantom{-}0)  &\in B_n, \\
(\phantom{-}0,\phantom{-}0,\phantom{-}0,\dots,\phantom{-}0,\phantom{-}0,\phantom{-}0)  &\in B_{n+1}.
\end{align*}
This concludes the proof.
\end{proof}


\section{Local to global} \label{Local to global}


In \cite{Mie2}, the first author showed that a uniformly locally hyperconvex metric space with a geodesic bicombing is hyperconvex. We will now extend this result to the classes of weakly externally hyperconvex and externally hyperconvex subsets. To this end, we will first list some results that we need afterwards.

\begin{Prop}\label{Prop:IntersectionExternallyHyperconvex}\cite[Proposition~1.2]{Mie}
	Let $(X,d)$ be a hyperconvex space and $\{A_i\}_{i \in I}$ a family of pairwise intersecting externally hyperconvex subsets such that one of them is bounded. Then $\bigcap_{i \in I} A_i \neq \emptyset$.
\end{Prop}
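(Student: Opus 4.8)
The plan is to prove the statement in two phases: first establish the \emph{finite} Helly property, namely that any finite subfamily of pairwise intersecting externally hyperconvex subsets already has a common point, and then upgrade to the full family by exploiting the bounded member through a Baillon-type intersection theorem. Throughout I use that a hyperconvex space is complete and $n$-hyperconvex for every $n$, so the machinery of Section~\ref{Sec:Finite} applies; in particular every externally hyperconvex subset is closed and lies in $\mathcal{E}_n(X) \subseteq \mathcal{\tilde{E}}_n(X)$ for all $n$, so that Lemma~\ref{Lem:intersection of three almost externally hyperconvex sets} specializes to the clean statement: \emph{any three pairwise intersecting externally hyperconvex subsets of $X$ have a common point.}

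For the finite phase I would first prove the stability lemma that the intersection of two intersecting externally hyperconvex subsets is again externally hyperconvex. Given externally hyperconvex $A,B$ with $A \cap B \neq \emptyset$ and a family of balls $\{B(x_k,r_k)\}_{k=1}^m$ with $d(x_k,x_l)\le r_k+r_l$ and $d(x_k,A\cap B)\le r_k$, set $C := \bigcap_{k=1}^m B(x_k,r_k)$. By Proposition~\ref{Prop:4-hyperconvex is n-hyperconvex}, $C$ is a nonempty externally hyperconvex set; since $d(x_k,A),d(x_k,B)\le r_k$, external hyperconvexity of $A$ and of $B$ gives $A\cap C\neq\emptyset$ and $B\cap C\neq\emptyset$, while $A\cap B\neq\emptyset$ by hypothesis. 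The three-set specialization of Lemma~\ref{Lem:intersection of three almost externally hyperconvex sets} then yields $A\cap B\cap C\neq\emptyset$, which is exactly external hyperconvexity of $A\cap B$. With this stability lemma and the three-set lemma in hand, finite Helly follows by induction on the number of sets: for pairwise intersecting $A_1,\dots,A_{n+1}$ the sets $B_i := A_i\cap A_{n+1}$ are externally hyperconvex (stability) and pairwise intersecting, since $B_i\cap B_j = A_i\cap A_j\cap A_{n+1}\neq\emptyset$ by the three-set lemma applied to each triple $A_i,A_j,A_{n+1}$; the inductive hypothesis then gives $\bigcap_{i=1}^{n}B_i = \bigcap_{i=1}^{n+1}A_i \neq\emptyset$.

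For the infinite phase, let $A_{i_0}$ be the bounded member and consider the family $\mathcal{F}$ of all finite intersections $A_{i_0}\cap A_{j_1}\cap\dots\cap A_{j_k}$. By the finite phase each such set is nonempty, by iterated application of the stability lemma each is externally hyperconvex, and each is bounded since contained in $A_{i_0}$; moreover $\mathcal{F}$ is closed under finite intersections, hence downward directed, and $\bigcap\mathcal{F} = \bigcap_{i}A_i$. It then remains to show that a downward-directed family of nonempty bounded hyperconvex subsets of $X$ has nonempty total intersection. This is the finite-intersection-property form of Baillon's intersection theorem, which I would either invoke as in \cite{Mie} or derive from the classical decreasing-chain version (nested families of nonempty bounded hyperconvex subsets have nonempty, hyperconvex intersection) via a Zorn's lemma reduction to a maximal nested subfamily of $\mathcal{F}$.

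I expect this infinite phase to be the main obstacle. Bounded closed subsets of a hyperconvex space need not be compact, so there is no direct compactness argument for passing from the finite intersection property to the full intersection; the substitute is Baillon's theorem, and the delicate point is precisely the step from nested chains to the directed family $\mathcal{F}$, that is, justifying that the intersection produced along a maximal chain is in fact contained in every member. The finite phase, by contrast, is routine once the stability lemma is combined with Lemma~\ref{Lem:intersection of three almost externally hyperconvex sets}.
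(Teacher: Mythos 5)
The paper itself offers no proof of this proposition; it is imported verbatim from \cite[Proposition~1.2]{Mie}, and the argument there follows your two-phase outline, so your overall plan is the right one. Your finite phase is essentially correct: the three-set case from Lemma~\ref{Lem:intersection of three almost externally hyperconvex sets}, a stability lemma for intersections of two externally hyperconvex sets, and induction. One caveat you should repair: as written, your stability lemma only tests $A\cap B$ against \emph{finite} families of balls, and Proposition~\ref{Prop:4-hyperconvex is n-hyperconvex} only covers finite intersections of balls, so what you actually obtain is $A\cap B\in\mathcal{E}_n(X)$ for all $n$, not $A\cap B\in\mathcal{E}(X)$. By the paper's own Example~\ref{Expl:externally n-hyperconvex but not hyperconvex} these notions differ, and the difference matters downstream: Baillon's theorem needs the members of your family $\mathcal{F}$ to be \emph{hyperconvex}, which follows from full external hyperconvexity (take balls centred in the set) but not from external $n$-hyperconvexity for every $n$. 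The fix is cheap --- an arbitrary admissible set $C=\bigcap_k B(x_k,r_k)$ is externally hyperconvex in a hyperconvex space by a direct triangle-inequality argument, since $d(y_j,C)\le s_j$ forces $d(y_j,x_k)\le s_j+r_k$, after which hyperconvexity of $X$ applies to the combined family --- but it is not delivered by the results you cite.

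The genuine gap is in the infinite phase, exactly where you suspect it. Reducing to a maximal nested subfamily of $\mathcal{F}$ does not work: in a directed poset a maximal chain need not be cofinal (for uncountable $I$, a chain indexed by the finite initial segments of a countably infinite subset of $I$ is already maximal), so the point Baillon produces along such a chain has no reason to lie in every member of $\mathcal{F}$. The argument that actually closes this is a Zorn or transfinite induction over subsets $J\subset I$ carrying the \emph{strengthened} invariant that $C_J:=A_{i_0}\cap\bigcap_{j\in J}A_j$ is non-empty, bounded, externally hyperconvex, \emph{and still meets every $A_i$ with $i\in I$}. The successor step restores the last clause by applying the three-set lemma to the pairwise intersecting triple $C_J$, $A_j$, $A_i$; the limit step applies Baillon's chain theorem not only to $(C_{J_\alpha})_\alpha$ but also to the chains $(C_{J_\alpha}\cap A_i)_\alpha$ and $(C_{J_\alpha}\cap\bigcap_k B(y_k,s_k))_\alpha$ in order to preserve both the invariant and external hyperconvexity across the limit. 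Without this invariant the induction cannot pass a successor step occurring after a limit stage, so the proposal as it stands does not yet prove the statement.
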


\begin{Lem}\label{Lem:IntersectionWEH}\cite[Proposition~2.11]{MieP}
	Let $X$ be a metric space and $A \in \mathcal{E}(X), Y \in \mathcal{W}(X)$ such that $A\cap Y \neq \emptyset$. Then we have $A\cap Y \in \mathcal{W}(X)$.
\end{Lem}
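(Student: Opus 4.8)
The plan is to unwind the definition of weak external hyperconvexity, fix the external point, and reduce the whole statement to a single ball-intersection question, which I then attack by an alternating projection argument between $A$ and $Y$. To prove $A\cap Y\in\mathcal{W}(X)$ I fix $x\in X$ and must show that $A\cap Y$ is externally hyperconvex in $(A\cap Y)\cup\{x\}$. So I take an admissible family of closed balls with centres in $(A\cap Y)\cup\{x\}$; collapsing the balls centred at $x$ into one, this is a ball $B(x,r_0)$ with $d(x,A\cap Y)\le r_0$ together with balls $B(p_j,\rho_j)$, $p_j\in A\cap Y$, all pairwise compatible. Writing $C:=B(x,r_0)\cap\bigcap_j B(p_j,\rho_j)$, the goal becomes $C\cap A\cap Y\neq\emptyset$. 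As a first observation, $C\cap A\neq\emptyset$ and $C\cap Y\neq\emptyset$: the former since $A\in\mathcal{E}(X)$, every $p_j$ lies in $A$, and $d(x,A)\le d(x,A\cap Y)\le r_0$; the latter since $Y\in\mathcal{W}(X)$, with $x$ serving as the single admissible external centre and each $p_j\in Y$. Moreover $E:=C\cap A$ is again externally hyperconvex and bounded, by the infinite analogue of Lemma~\ref{Lem:externally (n-k)-hyperconvex}.

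The key geometric ingredient is a projection identity: for every $w\in C\cap Y$ one has $d(w,E)=d(w,A)$, with a nearest point realised inside $E$. Indeed, apply external hyperconvexity of $A$ to the family consisting of the balls defining $C$ together with $B(w,d(w,A))$; the compatibility conditions hold because $w\in C$ forces $d(w,p_j)\le\rho_j$ and $d(w,x)\le r_0$, while the ball $B(w,d(w,A))$ is admissible as its radius equals $d(w,A)$. This yields a point of $A\cap C\cap B(w,d(w,A))=E\cap B(w,d(w,A))$, and since $E\subset A$ gives the reverse inequality, the identity follows. In particular I can pass from any point of $Y\cap C$ to a nearest point of $E$ without ever leaving $C$.

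Starting from $w_0\in C\cap Y$ I then alternate: project $w_n\in C\cap Y$ to a nearest $a_n\in E$ at distance $d(w_n,A)$, and use weak external hyperconvexity of $Y$ to return to a point $w_{n+1}\in C\cap Y$ close to $a_n$, arranging that $d(w_{n+1},A)$ shrinks by a fixed factor. Taking the limit of the resulting Cauchy sequence and using that $A$ and $Y$ are closed produces a point of $A\cap Y\cap C$, as desired. The main obstacle is precisely this return step: weak external hyperconvexity of $Y$ tolerates only a \emph{single} external centre, whereas reinserting the projection constraint $B(a_n,\cdot)$ while retaining the original constraint $B(x,r_0)$ would require two external centres. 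One therefore cannot contract in a single clean stroke; instead one runs a nested $\varepsilon$--$\delta$ scheme of the type used in Lemma~\ref{Lem:intersection of three almost externally hyperconvex sets}, spending a summable error budget to keep every iterate inside $C$ and to force geometric decay of $d(w_n,A)$, with completeness guaranteeing convergence. A more structural alternative is to pass to a hyperconvex ambient (for instance $A$ itself, which is hyperconvex, or the injective hull $\E{X}$) and invoke the Helly-type Proposition~\ref{Prop:IntersectionExternallyHyperconvex}; but the weak rather than full external hyperconvexity of $Y$ again blocks a direct application, so the same careful bookkeeping reappears.
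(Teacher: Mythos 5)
Your setup is sound as far as it goes: the reduction to a single external ball, the observations that $C\cap A\neq\emptyset$ and $C\cap Y\neq\emptyset$, the fact that $E:=C\cap A$ is again externally hyperconvex in $X$ (this holds directly from the definition of $\mathcal{E}(X)$ and needs no completeness, so the appeal to an ``infinite analogue'' of Lemma~\ref{Lem:externally (n-k)-hyperconvex} is dispensable), and the projection identity $d(w,E)=d(w,A)$ for $w\in C\cap Y$ are all correct and are natural first moves. (For the record, the paper does not prove this lemma at all; it imports it from \cite[Proposition~2.11]{MieP}, so there is no in-paper argument to compare against.)

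However, the proposal has a genuine gap, and it is exactly the one you flag yourself: the return step of the alternating scheme. To produce $w_{n+1}\in C\cap Y$ close to $a_n$ you would need to apply the defining property of $Y\in\mathcal{W}(X)$ to a family of balls containing \emph{both} $B(x,r_0)$ and $B(a_n,\cdot)$, whose centres $x$ and $a_n$ both lie outside $Y$ in general; weak external hyperconvexity tolerates only one such centre, and nothing in your data lets you absorb one constraint into the other (you cannot guarantee $d(a_n,x)\le r_0-d(w_n,A)/2$ or anything of that kind). Since this contraction step is the entire content of the lemma --- everything before it is routine --- asserting that ``a nested $\varepsilon$--$\delta$ scheme of the type used in Lemma~\ref{Lem:intersection of three almost externally hyperconvex sets}'' will close it is not a proof: that lemma operates in a structurally different situation (three pairwise intersecting sets, one of them almost externally $3$-hyperconvex, all constraints recentred at each stage), and it is not clear how its bookkeeping eliminates the second external centre here. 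The fallback routes you mention are also correctly diagnosed as blocked: Proposition~\ref{Prop:IntersectionExternallyHyperconvex} requires full external hyperconvexity of all the sets in the ambient hyperconvex space, and passing through the almost-hyperconvex machinery of Proposition~\ref{Prop:almost n-hyperconvex} only treats finitely many balls. A correct argument must first upgrade the interaction of $A$ and $Y$ --- for instance by proving that $A\cap Y$ is externally hyperconvex \emph{in $Y$}, so that the subsequent work takes place entirely inside $Y$ and the external point $x$ enters only once --- and none of that is carried out in the proposal.
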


\begin{Lem}\label{Lem:WEHInNbhd} \cite[Lemma~2.18]{MieP}
	Let $X$ be a hyperconvex metric space and $A \subset X$. Assume that there is some $s > 0$ such that $A \in \mathcal{W}(B(A,s))$. Then we have $A \in \mathcal{W}(X)$.
\end{Lem}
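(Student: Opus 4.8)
The plan is to deduce the global statement $A \in \mathcal{W}(X)$ from the local one $A \in \mathcal{W}(B(A,s))$ by exploiting two structural facts about hyperconvex spaces: first, that neighborhoods of the form $B(A,s)$ are themselves externally hyperconvex (hence hyperconvex) subsets admitting retractions, and second, that intersecting a weakly externally hyperconvex set with a nicely situated externally hyperconvex set preserves weak external hyperconvexity, which is exactly Lemma~\ref{Lem:IntersectionWEH}. The reduction should proceed by verifying the defining property of weak external hyperconvexity directly: given $x \in X$, I must show $A$ is externally hyperconvex in $A \cup \{x\}$, i.e. for any family of balls $\{B(x_i,r_i)\}$ with centers in $A \cup \{x\}$ satisfying $d(x_i,x_j) \le r_i + r_j$ and $d(x_i,A) \le r_i$, the intersection $\bigcap_i B(x_i,r_i) \cap A$ is nonempty.

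First I would reduce to testing the weak-external-hyperconvexity condition at a single external point $x \in X$, and then handle the case where $x$ lies outside $B(A,s)$, since if $x \in B(A,s)$ the hypothesis $A \in \mathcal{W}(B(A,s))$ applies almost directly (the admissible balls have radii large enough to keep the relevant intersection inside $B(A,s)$). For $x$ far from $A$, the key idea is to replace $x$ by a suitable proxy point $\tilde{x}$ lying on a geodesic from $x$ toward $A$ at distance roughly $s$ from $A$, using that $X$ is hyperconvex and hence geodesic; the ball data with center $x$ can be transferred to $\tilde{x}$ because $d(x,A) \le r$ forces the relevant portion of the ball $B(x,r)$ to meet $B(A,s)$. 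Concretely, I would consider the retraction or nearest-point structure onto $B(A,s)$: since $B(A,s)$ is externally hyperconvex in $X$ (balls and neighborhoods of hyperconvex sets inherit this), there is a nonexpansive retraction $X \to B(A,s)$, and pushing the center $x$ and radius $r$ forward under control of this retraction yields a ball problem entirely inside $B(A,s)$ with the same solution set relative to $A$.

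The hard part will be checking that the ball conditions $d(x_i,x_j) \le r_i + r_j$ and $d(x_i,A) \le r_i$ are preserved after replacing the external center $x$ by its proxy $\tilde{x}$ and adjusting the radius, since a naive nonexpansive retraction can shrink distances and thereby violate the compatibility inequalities needed to invoke local weak external hyperconvexity. I expect to control this by choosing the proxy so that distances to points of $A$ are exactly preserved — replacing $B(x,r)$ with $B(\tilde{x}, r - (d(x,A) - s))$ when $d(x,A) > s$, which keeps $d(\tilde{x}, a) = d(x,a) - (d(x,A)-s)$ for the nearest points and hence keeps $\bigcap B(x_i,r_i) \cap A$ unchanged. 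Once the problem is localized inside $B(A,s)$, the hypothesis $A \in \mathcal{W}(B(A,s))$ together with Lemma~\ref{Lem:IntersectionWEH} (to intersect with the neighborhood ball $B(A,s) = B(z,s)$ viewed as an externally hyperconvex set) finishes the argument, giving a point of $A$ in the desired intersection and establishing $A \in \mathcal{W}(X)$.
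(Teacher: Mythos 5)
First, a remark on the comparison itself: the paper does not prove this lemma, it imports it verbatim from \cite[Lemma~2.18]{MieP}, so there is no internal proof to measure your argument against; it has to stand on its own.

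It does not, because the proxy-point construction has a genuine gap. Your plan is to replace a far-away center $x$ by a point $\tilde{x}$ on a geodesic from $x$ toward a nearest point of $A$, with radius $r' = r - (d(x,A)-s)$, claiming the distances to points of $A$ are "exactly preserved" up to that shift. A geodesic toward one nearest point controls the distance to that single point only; in general no $\tilde{x}$ satisfies $d(\tilde{x},a)=d(x,a)-c$ for all $a \in A$, and the admissibility conditions involving the other centers $x_i \in A$ break. Concretely, take $X = l_\infty^2$, $A = [0,10]\times\{0\}$ (externally hyperconvex, so certainly $A \in \mathcal{W}(B(A,1))$), $s=1$, $x=(0,5)$, $r=5$, $x_1=(10,0)$, $r_1=5$. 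The family is admissible and $A \cap B(x,5)\cap B(x_1,5)=\{(5,0)\}$, but the geodesic proxy $\tilde{x}=(0,1)$ with $r'=1$ gives $d(\tilde{x},x_1)=10 > r'+r_1=6$, so the reduced family is no longer admissible (and indeed $B(\tilde{x},1)\cap B(x_1,5)\cap A=\emptyset$), so the hypothesis on $B(A,s)$ cannot be applied. A workable proxy (here $(4,1)$) must be obtained by solving a simultaneous ball-intersection problem involving $x$, all the $x_i$, and the additional constraint $d(\tilde{x},A)\le s$; since that last constraint is not a ball condition, this step itself requires either an iteration decreasing $d(x,A)$ in controlled increments (in the spirit of Lemma~\ref{Lem:ProximinalInNbhd} and Proposition~\ref{Prop:LocallyExternallyHyperconvexSigmaConvex}) or a prior proximinality/retraction statement. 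Relatedly, your assertion that $B(A,s)$ is externally hyperconvex in $X$ and hence admits a nonexpansive retraction from $X$ is unjustified: neighborhoods of \emph{externally} hyperconvex sets inherit that property, but $A$ is only assumed weakly externally hyperconvex in $B(A,s)$, which is essentially the kind of conclusion you are trying to establish. As written, the argument does not close.
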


\begin{Lem}\label{Lem:ExternallyHyperconvexInNbhd} \cite[Lemma~2.19]{MieP}
	Let $X$ be a hyperconvex metric space and $A \subset X$. Assume that there is some $s > 0$ such that $A \in \mathcal{E}(B(A,s))$. Then we have $A \in \mathcal{E}(X)$.
\end{Lem}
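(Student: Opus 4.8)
The plan is to verify the defining condition of $\mathcal{E}(X)$ for an arbitrary admissible family of closed balls $\{B(x_i,r_i)\}_{i\in I}$ with $x_i\in X$, $d(x_i,A)\le r_i$ and $d(x_i,x_j)\le r_i+r_j$, and to produce a point of $A\cap\bigcap_i B(x_i,r_i)$. Two observations frame everything. First, since $X$ is hyperconvex, every closed ball is itself externally hyperconvex in $X$ (the augmented family obtained by adjoining the ball is again pairwise compatible, because $d(x_i,A)\le r_i$ rewrites as the missing center condition). Second, applying the hypothesis $A\in\mathcal{E}(B(A,s))$ with all centers taken inside $A$ shows that $A$ is hyperconvex. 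A first, essentially free, reduction is then available: since $\mathcal{E}(B(A,s))\subseteq\mathcal{W}(B(A,s))$, Lemma~\ref{Lem:WEHInNbhd} already gives $A\in\mathcal{W}(X)$. This settles every family in which at most one center $x_i$ lies outside $A$, and in particular shows that $A$ is proximinal in $X$.

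The core of the proof is to upgrade weak external hyperconvexity to full external hyperconvexity, that is, to handle several centers $x_i$ lying far from $A$. The idea is to pull all such centers into the neighbourhood $B(A,s)$, where the hypothesis applies, without losing the intersection with $A$. Concretely, using hyperconvexity of $X$ one picks a common anchor $p\in\bigcap_i B(x_i,r_i)$ and uses the geodesic bicombing $\sigma$ of $X$ to define reduced centers $x_i':=\sigma_{x_i p}(\lambda_i)$ and radii $r_i'=d(x_i',p)$; then $B(x_i',r_i')\subseteq B(x_i,d(x_i,p))\subseteq B(x_i,r_i)$, so any point of $A$ found in the reduced balls lies in the original ones, and the pairwise inequality holds automatically since $d(x_i',x_j')\le d(x_i',p)+d(p,x_j')=r_i'+r_j'$. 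The aim is to choose the parameters so that in addition $d(x_i',A)\le s$ for all $i$, after which $A\in\mathcal{E}(B(A,s))$ yields the desired point $a\in A\cap\bigcap_i B(x_i',r_i')$. For an infinite index set $I$ I would treat finite subfamilies first and then pass to the full intersection with the Helly-type Proposition~\ref{Prop:IntersectionExternallyHyperconvex}, applied to the externally hyperconvex balls $B(x_i,r_i)$ together with one bounded member of the form $A\cap B(x_{i_0},r_{i_0})$.

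The hard part is precisely the \emph{simultaneous} relocation of the centers: one must move each $x_i$ into $B(A,s)$ \emph{and} keep the family admissible, and these two demands conflict. Projecting each $x_i$ independently onto a nearest point of $A$ lands the centers in $B(A,s)$ but can inflate their mutual distances, because nearest points of $A$ may be far apart; on the other hand, contracting all centers toward a single anchor $p\in\bigcap_i B(x_i,r_i)$ preserves the pairwise condition but drives the centers toward $p$ rather than toward $A$, and forcing $d(x_i',A)\le s$ this way collapses the radii $r_i'$ to $0$. I expect the resolution to require choosing the anchor $p$ itself in $B(A,s)$ and then running a convergent iteration that alternates contracting toward such an anchor with projecting into $B(A,s)$, so that the excess $\sup_i\bigl(d(x_i,A)-s\bigr)$ decreases at each stage while the accumulated displacement stays summable; completeness of $X$ then delivers an exact point of $A$ in the intersection, exactly in the spirit of the Cauchy-sequence arguments of Section~\ref{Sec:Finite} (compare Lemma~\ref{Lem:almost externally (n+1)-hyperconvex} and Lemma~\ref{Lem:almost 3-hyperconvex}). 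Controlling this iteration so that the errors sum to a finite total, while maintaining admissibility at every step, is the main obstacle.
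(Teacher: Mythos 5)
The paper itself offers no proof of this lemma: it is imported verbatim from \cite[Lemma~2.19]{MieP}, so the only in-house comparison available is with the closely analogous relocation argument in Proposition~\ref{Prop:LocallyExternallyHyperconvexSigmaConvex}. Judged on its own, your proposal is not yet a proof. The preliminary reductions are correct and useful: closed balls are externally hyperconvex in the hyperconvex space $X$, centering all balls in $A$ shows $A$ is hyperconvex, and $\mathcal{E}(B(A,s))\subset\mathcal{W}(B(A,s))$ together with Lemma~\ref{Lem:WEHInNbhd} gives $A\in\mathcal{W}(X)$, hence proximinality and the case of a single external center. But the entire content of the lemma is the case of several centers far from $A$, and there you only describe what a proof would have to accomplish. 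You name the obstruction yourself and then write that you ``expect the resolution to require'' an alternating contraction/projection iteration whose admissibility and convergence you do not establish; that iteration \emph{is} the lemma. Concretely, the anchor-based contraction cannot work as sketched: if $x_i'$ lies on $\sigma_{x_ip}$ with $r_i'=d(x_i',p)$, then $d(x_i',A)\le s$ forces $r_i'\ge d(x_i,p)+d(x_i,A)-s$, which exceeds $d(x_i,p)$ whenever $d(x_i,A)>s$ unless the geodesic happens to approach $A$; so one must first choose the anchor $p\in\bigcap_iB(x_i,r_i)$ close to $A$, and producing such a $p$ is essentially the statement being proved. A genuine proof needs a quantitative scheme in the spirit of $\mathsf{P}(R)\Rightarrow\mathsf{P}(\tfrac54R)$ and $\mathsf{P}'(R')\Rightarrow\mathsf{P}'(2R')$ from Proposition~\ref{Prop:LocallyExternallyHyperconvexSigmaConvex} (or the argument of \cite{MieP}); none is supplied.

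There is a second, smaller gap in the passage to infinite families. Proposition~\ref{Prop:IntersectionExternallyHyperconvex} requires a pairwise intersecting family of \emph{externally hyperconvex} subsets of a hyperconvex space with one bounded member, but the bounded member $A\cap B(x_{i_0},r_{i_0})$ you propose to adjoin to the balls $B(x_i,r_i)$ is not known to lie in $\mathcal{E}(X)$ --- that is essentially the conclusion you are after. The workable variant, used in the proof of Lemma~\ref{Lem:BoundedExternallyHyperconvex}, applies the proposition inside the hyperconvex space $A$ to the sets $A\cap B(x_i,r_i)$; but this presupposes both $A\cap B(x_i,r_i)\in\mathcal{E}(A)$ and the two-ball case $A\cap B(x_i,r_i)\cap B(x_j,r_j)\neq\emptyset$, so it cannot substitute for the missing core argument.
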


	Let $X$ be a metric space. A subset $A \subset X$ is called \emph{locally (weakly) externally hyperconvex} in $X$ if for all $x \in A$ there is some $r_x > 0$ such that $A \cap B(x,r_x)$ is (weakly) externally hyperconvex in $B(x,r_x)$.
	If we can choose $r_x=r >0$ for all $x \in A$, we call $A$ \emph{uniformly locally (weakly) externally hyperconvex} in $X$.

\begin{Lem}\label{Lem:LocallyExternallyHyperconvex}
	Let $X$ be a hyperconvex metric space. Then $A \subset X$ is locally (weakly) externally hyperconvex in $X$ if and only if for every $x \in A$ there is some $r_x > 0$ such that  $A \cap B(x,r_x)$ is (weakly) externally hyperconvex in $X$.
\end{Lem}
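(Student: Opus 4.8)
The plan is to prove the equivalence using a simple \emph{restriction principle} together with the two neighborhood results Lemma~\ref{Lem:ExternallyHyperconvexInNbhd} and Lemma~\ref{Lem:WEHInNbhd}, which already encapsulate the hard ``local-to-global inside a hyperconvex space'' step. First I would record the restriction principle: if $C \subset Z \subset Y$ are metric spaces (with induced metrics) and $C \in \mathcal{E}(Y)$ (resp.\ $C \in \mathcal{W}(Y)$), then $C \in \mathcal{E}(Z)$ (resp.\ $C \in \mathcal{W}(Z)$). This is immediate from the definitions, since shrinking the ambient space only shrinks the families of admissible ball centers (resp.\ admissible exterior points), while the relevant intersections with $C$ are unchanged because $C \subset Z$. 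The direction ``for every $x$ there is $r_x$ with $A \cap B(x,r_x)$ (weakly) externally hyperconvex in $X$'' $\Rightarrow$ ``locally (weakly) externally hyperconvex'' then follows at once: the restriction principle applied to $A \cap B(x,r_x) \subset B(x,r_x) \subset X$ gives the local condition with the same radius $r_x$.

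For the converse, fix $x \in A$ and let $r_x$ be such that $D := A \cap B(x,r_x)$ is (weakly) externally hyperconvex in $B(x,r_x)$. I would produce a \emph{smaller} radius that works globally: set $r' := r_x/2$ and $C := A \cap B(x,r') = D \cap B(x,r')$, so that $x \in C \neq \emptyset$ and, by the triangle inequality, $B(C,r') \subset B(x,r_x)$. The first substantial step is to show that $C$ is (weakly) externally hyperconvex in $B(x,r_x)$. Recalling that a closed ball in a hyperconvex space is hyperconvex and that sub-balls of a hyperconvex space are externally hyperconvex, the ball $B(x,r')$ is externally hyperconvex in the hyperconvex ball $B(x,r_x)$. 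In the weak case, $C = B(x,r') \cap D$ is then weakly externally hyperconvex in $B(x,r_x)$ by Lemma~\ref{Lem:IntersectionWEH}. In the external case I would verify this directly: given an admissible family $\{B(w_i,\rho_i)\}_i$ for $C$ in $B(x,r_x)$, adjoin the ball $B(x,r')$; the only nontrivial compatibility is $d(w_i,x) \le \rho_i + r'$, which follows from $d(w_i,C) \le \rho_i$ and $C \subset B(x,r')$, and external hyperconvexity of $D$ furnishes a point of $\bigcap_i B(w_i,\rho_i) \cap B(x,r') \cap D = \bigcap_i B(w_i,\rho_i) \cap C$.

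With $C$ now known to be (weakly) externally hyperconvex in $B(x,r_x)$, I apply the restriction principle to the chain $C \subset B(C,r') \subset B(x,r_x)$, obtaining that $C$ is (weakly) externally hyperconvex in the genuine neighborhood $B(C,r')$ of itself. Since $X$ is hyperconvex, Lemma~\ref{Lem:ExternallyHyperconvexInNbhd} (resp.\ Lemma~\ref{Lem:WEHInNbhd}) upgrades this to $C = A \cap B(x,r') \in \mathcal{E}(X)$ (resp.\ $\in \mathcal{W}(X)$), which is exactly the global condition with radius $r'$. This closes the converse direction and completes the equivalence.

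The main obstacle is expected to be organizational rather than conceptual: all the depth sits in Lemmas~\ref{Lem:ExternallyHyperconvexInNbhd} and \ref{Lem:WEHInNbhd}, so the real task is to arrange that their hypothesis --- (weak) external hyperconvexity in a \emph{full} neighborhood of the set --- is satisfied. The one essential constraint is that this neighborhood must stay inside $B(x,r_x)$, the only region where information is available; this is precisely what forces shrinking $r_x$ to $r' = r_x/2$ and checking $B(C,r') \subset B(x,r_x)$. A secondary point of care is keeping the weak and external cases in parallel, handled respectively by Lemma~\ref{Lem:IntersectionWEH} and by the direct adjunction of the sub-ball to the defining family.
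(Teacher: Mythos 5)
Your proposal is correct and follows essentially the same route as the paper: shrink the radius to $r' = r_x/2$, observe that $A \cap B(x,r') = (A\cap B(x,r_x)) \cap B(x,r')$ is (weakly) externally hyperconvex in the neighborhood $B(A\cap B(x,r'),r') \subset B(x,r_x)$, and then invoke Lemma~\ref{Lem:ExternallyHyperconvexInNbhd} (resp.\ Lemma~\ref{Lem:WEHInNbhd}). The only difference is that you spell out the intermediate verification (via Lemma~\ref{Lem:IntersectionWEH} and the restriction principle) that the paper leaves implicit.
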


\begin{proof}
	If $A \cap B(x,r_x)$ is (weakly) externally hyperconvex in $B(x,r_x)$ then $$A \cap B(x,\tfrac{r_x}{2}) = (A \cap B(x,r_x)) \cap B(x,\tfrac{r_x}{2})$$ is (weakly) externally hyperconvex in $$B(A \cap B(x,\tfrac{r_x}{2}),\tfrac{r_x}{2}) \subset B(x,r_x)$$ and therefore $B(x,\tfrac{r_x}{2})$ is (weakly) externally hyperconvex in $X$ by Lemma~\ref{Lem:WEHInNbhd} and Lemma~\ref{Lem:ExternallyHyperconvexInNbhd}. The converse is obvious.
\end{proof}

\begin{Lem}\label{Lem:HyperconvexBalls} \cite[Lemma~3.2]{Mie2}
	Let $X$ be a metric space with the property that every closed ball $B(x,r)$ is hyperconvex, then $X$ is itself hyperconvex.
\end{Lem}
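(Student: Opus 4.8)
The plan is to reduce the given (arbitrary) family of closed balls to a family of externally hyperconvex subsets lying inside a single hyperconvex ball, and then to invoke Proposition~\ref{Prop:IntersectionExternallyHyperconvex}.

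First I would record the easy structural consequences of the hypothesis. Any two points $x,y\in X$ lie in the ball $B(x,d(x,y))$, and any three points lie in a common ball; since every such ball is hyperconvex, it is in particular metrically convex and modular, and because metric intervals and median sets are computed from the metric alone, the witnessing midpoints and medians already lie in $X$. Hence $X$ is metrically convex and modular, so $X$ is $3$-hyperconvex by Proposition~\ref{Prop:3-hyperconvex}. (Completeness comes for free: a Cauchy sequence is bounded, hence contained in a closed ball, which is complete.) In a metrically convex space the defining condition $d(x_i,x_j)\le r_i+r_j$ is equivalent to $B(x_i,r_i)\cap B(x_j,r_j)\neq\emptyset$, so it suffices to show that every family of pairwise intersecting closed balls has a common point.

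Now let $\{B(x_i,r_i)\}_{i\in I}$ be such a family, fix an index $i_0$, and set $H:=B(x_{i_0},r_{i_0})$, which is hyperconvex. For each $i\in I$ put $D_i:=B(x_i,r_i)\cap H$; these are bounded subsets of the hyperconvex space $H$, and $D_{i_0}=H$. I would then check the hypotheses of Proposition~\ref{Prop:IntersectionExternallyHyperconvex} applied inside $H$. Pairwise intersection is immediate from $3$-hyperconvexity, since $D_i\cap D_j=B(x_i,r_i)\cap B(x_j,r_j)\cap B(x_{i_0},r_{i_0})\neq\emptyset$ as the three balls are pairwise Helly. Granting that each $D_i$ is externally hyperconvex in $H$, Proposition~\ref{Prop:IntersectionExternallyHyperconvex} yields $\bigcap_{i\in I}D_i\neq\emptyset$, and any point of this intersection lies in $\bigcap_{i\in I}B(x_i,r_i)$, which finishes the argument.

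The hard part is precisely the claim that $D_i=B(x_i,r_i)\cap H$ is externally hyperconvex in $H$ when $x_i$ lies outside $H$ (for $x_i\in H$ it is a ball of $H$, hence externally hyperconvex). I would establish this by showing that $D_i$ is an \emph{admissible} subset of $H$, i.e.\ an intersection of closed balls of $H$, since admissible subsets of a hyperconvex space are externally hyperconvex (cf.\ \cite{Mie}). Concretely, writing $\mathcal{A}$ for the intersection of all balls of $H$ containing $D_i$, one has $D_i\subseteq\mathcal{A}$ trivially, and the content is the reverse inclusion: for each $z\in H$ with $d(z,x_i)>r_i$ one must produce a ball of $H$ containing $D_i$ but excluding $z$. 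Here the hyperconvexity (equivalently, injectivity) of $H$ enters, through the nonexpansive retraction $X\to H$ together with the metric convexity of $H$, to separate $z$ from $D_i$ by an $H$-centred ball. Verifying that such a separating ball always exists — equivalently, that $D_i$ is an admissible subset of $H$ — is the genuine technical obstacle; everything else is a formal consequence of $3$-hyperconvexity and Proposition~\ref{Prop:IntersectionExternallyHyperconvex}.
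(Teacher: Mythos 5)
The overall architecture of your argument is sound: fixing $H:=B(x_{i_0},r_{i_0})$, cutting every ball down to $D_i:=B(x_i,r_i)\cap H$, checking pairwise intersection via $3$-hyperconvexity (which you correctly derive from hyperconvexity of balls through Proposition~\ref{Prop:3-hyperconvex}), and invoking Proposition~\ref{Prop:IntersectionExternallyHyperconvex} inside the hyperconvex space $H$ with $D_{i_0}=H$ as the bounded member. The problem is the step you yourself flag as ``the genuine technical obstacle'': you never prove that $D_i$ is externally hyperconvex in $H$ when $x_i\notin H$, and the route you propose for it --- showing $D_i$ is an \emph{admissible} subset of $H$ by separating each $z\in H\setminus D_i$ from $D_i$ by a ball of $H$ --- is false. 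Take $X=l^2_\infty$, $x_{i_0}=(0,0)$, $r_{i_0}=1$, so $H=[-1,1]^2$, and $x_i=(3,0)$, $r_i=\tfrac{5}{2}$ (note $d(x_i,x_{i_0})=3\le r_i+r_{i_0}$). Then $D_i=[\tfrac{1}{2},1]\times[-1,1]$, and any ball $B(c,s)$ with $c=(c_1,c_2)\in H$ containing $D_i$ must satisfy $s\ge\max\{|1-c_2|,|1+c_2|\}=1+|c_2|\ge\max\{|c_1|,|c_2|\}$, hence contains the origin, which lies in $H\setminus D_i$. So no separating ball of $H$ exists: $D_i$ is \emph{not} admissible in $H$, even though it is in fact externally hyperconvex in $H$. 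As written, the key claim is unsupported and the proposed method for establishing it cannot succeed.

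The claim you need is true, but it must be obtained by enlarging the ambient ball rather than by separation inside $H$. Choose $R\ge\max\{r_{i_0},d(x_{i_0},x_i)\}$ and set $H':=B(x_{i_0},R)$, which is hyperconvex by hypothesis and contains both $x_i$ and $H$. Then $B(x_i,r_i)\cap H'$ and $H=B(x_{i_0},r_{i_0})\cap H'$ are genuine balls of $H'$ with non-empty intersection, so $D_i$ is an admissible subset of $H'$ and hence externally hyperconvex in $H'$ by the standard fact that admissible subsets of hyperconvex spaces are externally hyperconvex; since $D_i\subseteq H\subseteq H'$ and external hyperconvexity only restricts the allowed centers when passing to the smaller ambient space, $D_i$ is externally hyperconvex in $H$. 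With this substitute for your admissibility argument the proof closes; alternatively one can group the balls as $D_R:=B(x_{i_0},R)\cap\bigcap_{d(x_i,x_{i_0})\le R}B(x_i,r_i)$, each of which is admissible in the hyperconvex ball $B(x_{i_0},R)$, and apply Proposition~\ref{Prop:IntersectionExternallyHyperconvex} to this nested family inside $H$.
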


\begin{Lem}\label{Lem:BoundedExternallyHyperconvex}
	Let $X$ be a metric space and $A \subset X$. Then $A \in \mathcal{E}(X)$ if and only if $A \cap B(z,R) \in \mathcal{E}(B(z,R))$ for all $z \in X , R \geq d(z,A)$.
\end{Lem}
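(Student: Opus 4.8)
The two implications are of quite different character, so the plan is to treat them separately, and I expect the forward direction to be routine and the converse to carry all the difficulty.

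For $A\in\mathcal{E}(X)\Rightarrow A\cap B(z,R)\in\mathcal{E}(B(z,R))$, I would argue straight from the definition. Fix $z\in X$ and $R\ge d(z,A)$, write $Y:=B(z,R)$ and $A_0:=A\cap Y$, and take any family $\{B(x_i,r_i)\}_{i\in I}$ of closed balls of $Y$ (so $x_i\in Y$) with $d(x_i,A_0)\le r_i$ and $d(x_i,x_j)\le r_i+r_j$. The idea is simply to adjoin the single ball $B(z,R)$ and read the enlarged family inside $X$: since $x_i\in Y$ we have $d(x_i,z)\le R\le r_i+R$, while $d(x_i,A)\le d(x_i,A_0)\le r_i$ and $d(z,A)\le R$, so all three defining conditions hold in $X$. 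External hyperconvexity of $A$ then yields a point $p\in A\cap B(z,R)\cap\bigcap_i B(x_i,r_i)$; as $p\in A_0$ and $B(x_i,r_i)\cap Y$ is exactly the closed ball of $Y$ about $x_i$, this $p$ witnesses the required non-emptiness. This step works verbatim for arbitrary, in particular infinite, families.

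For the converse I would start from a family $\{B(x_i,r_i)\}_{i\in I}$ in $X$ with the usual conditions, fix one index $i_0$, and localise to $Y:=B(x_{i_0},r_{i_0})$; since $r_{i_0}\ge d(x_{i_0},A)$, the hypothesis gives $A_0:=A\cap Y\in\mathcal{E}(Y)$. Every point of the desired intersection automatically lies in $Y$, hence in $A_0$, so it suffices to find a point of $A_0$ lying in all $B(x_i,r_i)$. My plan is to run this in two stages. \emph{Stage A} records the finitary input: for any two indices $i,j$ I would establish $A\cap B(x_i,r_i)\cap B(x_j,r_j)\neq\emptyset$ by passing to a ball $Y':=B(x_{i_0},R')$ with $R'$ chosen large enough (permissible, as $R'$ ranges over all values $\ge d(x_{i_0},A)$) to contain $x_i,x_j$ together with approximate nearest points of $A$; the explicit bound $d(x_{i_0},a)\le d(x_{i_0},x_i)+r_i+\eps$ shows such witnesses land in $Y'$, so the three conditions transfer to $\{B_{Y'}(x_i,r_i),B_{Y'}(x_j,r_j)\}$ and $A\cap Y'\in\mathcal{E}(Y')$ produces the point exactly (no completeness needed). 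In particular this gives $A_0\cap\bigl(B(x_i,r_i)\cap Y\bigr)\neq\emptyset$ for every $i$, and the same enlargement argument yields external $n$-hyperconvexity of $A$ for every finite $n$.

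\emph{Stage B} upgrades to arbitrary families, and this is where I expect the real obstacle to lie. In a bare metric space there is no geodesic along which to slide an external centre $x_i\notin Y$ into $Y$, so its constraint cannot be rewritten as a closed ball of $Y$ with the same trace on $A_0$, and when the index set is infinite the centres may be unbounded, so no single enlarged ball captures all of them. In the setting of this section, where $X$ (hence every ball $Y$) is hyperconvex, I would bypass this by invoking Proposition~\ref{Prop:IntersectionExternallyHyperconvex}: inside the hyperconvex space $Y$ the sets $C_i:=B(x_i,r_i)\cap Y$ are intersections of balls, hence externally hyperconvex, and together with the bounded set $A_0\in\mathcal{E}(Y)$ they form a family of pairwise intersecting externally hyperconvex subsets — the pairs $A_0\cap C_i$ being non-empty by Stage A, and the pairs $C_i\cap C_j$ by the binary Helly property of balls in a hyperconvex space. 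That proposition then delivers $A_0\cap\bigcap_i C_i\neq\emptyset$, which is exactly $A\cap\bigcap_i B(x_i,r_i)\neq\emptyset$. The delicate points to nail down are therefore the admissibility of the $C_i$ and the verification of the pairwise-intersection and boundedness hypotheses; proving the infinite case \emph{without} a Helly-type tool (i.e.\ for a genuinely general metric space $X$) is the part I expect to require the most care, and I would approach it by approximating the traces of the external ``half-space-like'' balls on $A_0$ by internal balls of $Y$ and passing to a limit using the full external hyperconvexity of $A_0$ in $Y$.
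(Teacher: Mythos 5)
Your forward direction is correct and in fact more self-contained than the paper's (which simply cites Lemma~\ref{Lem:IntersectionWEH}): adjoining the single ball $B(z,R)$ to the given family and applying $A \in \mathcal{E}(X)$ is exactly the right move, and it handles arbitrary families. Stage~A of your converse is also correct and coincides with the paper's pairwise-intersection step.

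The gap is in Stage~B. The lemma assumes only that $X$ is a metric space, but your Helly argument is run inside $Y = B(x_{i_0}, r_{i_0})$ and needs $Y$ to be hyperconvex, so that the traces $C_i = B(x_i,r_i)\cap Y$ are externally hyperconvex in $Y$ and pairwise intersecting. Nothing in the hypotheses gives you that: they control the sets $A\cap B(z,R)$, not the balls $B(z,R)$ themselves, and your fallback for general $X$ (``approximating the traces \ldots{} and passing to a limit'') is a hope, not an argument. The paper's resolution is to move the Helly step from $Y$ into $A$: every ball $A\cap B(a,r)$ of $A$ (with $a \in A$) is externally hyperconvex in $B(a,r)$ and hence hyperconvex, so Lemma~\ref{Lem:HyperconvexBalls} shows that $A$ itself is hyperconvex; the sets $B_i := A\cap B(x_i,r_i)$ are then pairwise intersecting (your Stage~A), bounded, and externally hyperconvex in $A$ by Lemma~\ref{Lem:ExternallyHyperconvexInNbhd}, since each $B_i$ is externally hyperconvex in a neighbourhood of itself inside $A$ (take a large ball $B(z,R)\supset B(x_i,r_i+s)$ and use the hypothesis there). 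Proposition~\ref{Prop:IntersectionExternallyHyperconvex} applied in the hyperconvex space $A$ then yields $\bigcap_i B_i = A\cap\bigcap_i B(x_i,r_i)\neq\emptyset$. You have all the ingredients for this; the fix is only to place the ambient hyperconvex space at $A$ rather than at a ball of $X$.
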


\begin{proof}
If $A \in \mathcal{E}(X)$ then $A \cap B(z,R) \in \mathcal{E}(B(z,R))$ by Lemma~\ref{Lem:IntersectionWEH}.

For the other direction, first observe that $A$ is hyperconvex by Lemma~\ref{Lem:HyperconvexBalls}. Let $x_i \in X$ with $d(x_i,A) \leq r_i$ and $d(x_i,x_j) \leq r_i + r_j$. Define $B_i := A \cap B(x_i,r_i)$. Since for fixed $i,j$ and some $s>0$ there are $z \in X$ and $R>0$ such that $B(x_i,r_i+s), B(x_j,r_j+s) \subset B(z,R)$, we have $B_i \cap B_j \neq \emptyset$ and $B_i \in \mathcal{E}(A)$ by Lemma~\ref{Lem:ExternallyHyperconvexInNbhd}. Hence we can conclude
	$$A \cap \bigcap_i B(x_i,r_i) = \bigcap_i B_i \neq \emptyset$$
by Proposition~\ref{Prop:IntersectionExternallyHyperconvex}.
\end{proof}

\begin{Lem}\label{Lem:BoundedWEH}
	Let $X$ be a metric space and $A \subset X$. Then $A \in \mathcal{W}(X)$ if and only if $A \cap B(z,R) \in \mathcal{W}(B(z,R))$ for all $z \in X , R \geq d(z,A)$.
\end{Lem}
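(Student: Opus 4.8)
The plan is to prove this as the weakly-external analogue of Lemma~\ref{Lem:BoundedExternallyHyperconvex}, exploiting throughout that $A \in \mathcal{W}(X)$ is by definition equivalent to $A \in \mathcal{E}(A \cup \{x\})$ for every $x \in X$. Both implications reduce to the already established external case together with the Helly-type Proposition~\ref{Prop:IntersectionExternallyHyperconvex}; note also that taking $x \in A$ in the definition gives $A \in \mathcal{E}(A)$, i.e. weak external hyperconvexity implies hyperconvexity, a fact I will use repeatedly.

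For the implication stating that $A \cap B(z,R) \in \mathcal{W}(B(z,R))$ for all admissible $z,R$ forces $A \in \mathcal{W}(X)$, I would fix $x \in X$ and apply Lemma~\ref{Lem:BoundedExternallyHyperconvex} to the metric space $Z := A \cup \{x\}$ with the subset $A$, in order to obtain $A \in \mathcal{E}(Z) = \mathcal{E}(A \cup \{x\})$. Its hypothesis requires $A \cap B_Z(w,\rho) \in \mathcal{E}(B_Z(w,\rho))$ for all $w \in Z$ and $\rho \ge d(w,A)$, where $B_Z(w,\rho) = (A \cup \{x\}) \cap B(w,\rho)$. If $x \notin B(w,\rho)$ this set equals $A \cap B(w,\rho)$ and the external hyperconvexity collapses to hyperconvexity of $A \cap B(w,\rho)$, which holds since $A \cap B(w,\rho) \in \mathcal{W}(B(w,\rho))$. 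If $x \in B(w,\rho)$, then $B_Z(w,\rho) = (A \cap B(w,\rho)) \cup \{x\}$ and the needed statement $A \cap B(w,\rho) \in \mathcal{E}((A\cap B(w,\rho)) \cup \{x\})$ is exactly weak external hyperconvexity of $A \cap B(w,\rho)$ in $B(w,\rho)$ evaluated at the single external point $x$. As $x$ was arbitrary, this gives $A \in \mathcal{W}(X)$.

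For the converse, assume $A \in \mathcal{W}(X)$ and fix $z \in X$, $R \ge d(z,A)$ and $v \in B(z,R)$; I must show that $A \cap B(z,R)$ is externally hyperconvex in $(A \cap B(z,R)) \cup \{v\}$. Here $A$ is hyperconvex, and the key step is the auxiliary claim that for every $w \in X$ and $\rho \ge d(w,A)$ one has $A \cap B(w,\rho) \in \mathcal{E}(A)$: given balls $B(a_k,\rho_k)$ with $a_k \in A$ and $d(a_k, A \cap B(w,\rho)) \le \rho_k$, adjoining the ball $B(w,\rho)$ and invoking $A \in \mathcal{E}(A \cup \{w\})$ produces the required point, the extra pairwise inequality $d(w,a_k) \le \rho + \rho_k$ being forced by $d(a_k, A \cap B(w,\rho)) \le \rho_k$. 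Applying this to $w = z$ and $w = v$ shows that $A \cap B(z,R)$, $A \cap B(v,r_v)$ and each ball $A \cap B(y_i,r_i)$ (with $y_i \in A \cap B(z,R)$) is externally hyperconvex in the hyperconvex space $A$. One then verifies that this family is pairwise intersecting — the only nontrivial pair, $A \cap B(z,R)$ with $A \cap B(v,r_v)$, meets because $d(v, A \cap B(z,R)) \le r_v$ — so Proposition~\ref{Prop:IntersectionExternallyHyperconvex} (one member, $A \cap B(z,R)$, being bounded) yields a common point lying in $A \cap B(z,R)$, which is exactly what weak external hyperconvexity in $B(z,R)$ demands.

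The main obstacle is this converse direction: weak external hyperconvexity only provides one external point at a time, whereas the condition to be checked couples the genuine external centre $v$ with the constraint of staying inside $B(z,R)$, i.e. effectively a second external centre $z$. The resolution — recasting both $B(z,R)$ and $B(v,r_v)$ as externally hyperconvex subsets of the hyperconvex space $A$ through the auxiliary claim above, and then closing the argument by the Helly property of Proposition~\ref{Prop:IntersectionExternallyHyperconvex} — is the step requiring the most care, especially the verification that the resulting family is pairwise intersecting.
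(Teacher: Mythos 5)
Your reduction of the implication ``$A \cap B(z,R) \in \mathcal{W}(B(z,R))$ for all admissible $z,R$ implies $A \in \mathcal{W}(X)$'' to Lemma~\ref{Lem:BoundedExternallyHyperconvex} applied to the two-point extension $Z = A \cup \{x\}$ is correct, and it is a genuinely different (and rather clean) route: the paper instead proves this direction directly by running Proposition~\ref{Prop:IntersectionExternallyHyperconvex} inside the hyperconvex space $A$, after upgrading the sets $A \cap B(x,r)$, $A\cap B(x_i,r_i)$ to members of $\mathcal{E}(A)$ via Lemma~\ref{Lem:ExternallyHyperconvexInNbhd}. Your case distinction on whether $x$ lies in $B(w,\rho)$, and the observation that each case is exactly an instance of the hypothesis, both check out. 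The auxiliary claim in your other direction ($A\cap B(w,\rho)\in\mathcal{E}(A)$ by adjoining the ball $B(w,\rho)$ and invoking $A\in\mathcal{E}(A\cup\{w\})$) is also correct.

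The direction ``$A\in\mathcal{W}(X)$ implies $A\cap B(z,R)\in\mathcal{W}(B(z,R))$'' has a genuine gap, however, precisely at the step you single out as needing care. Before Proposition~\ref{Prop:IntersectionExternallyHyperconvex} can be applied, the family must be pairwise intersecting, and you dispose of the pair $A\cap B(z,R)$, $A\cap B(v,r_v)$ by saying it meets ``because $d(v,A\cap B(z,R))\le r_v$''. That inequality only produces points of $A\cap B(z,R)$ within $r_v+\epsilon$ of $v$ for every $\epsilon>0$; to obtain a point at distance at most $r_v$ you need $A\cap B(z,R)$ to be proximinal \emph{from the external point} $v$. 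Your auxiliary claim yields $A\cap B(z,R)\in\mathcal{E}(A)$, which gives proximinality only from points of $A$, and $v$ need not lie in $A$. Worse, the assertion $A\cap B(z,R)\cap B(v,r_v)\neq\emptyset$ under the hypothesis $d(v,A\cap B(z,R))\le r_v$ is exactly the one-ball case of the statement being proved (weak external $1$-hyperconvexity of $A\cap B(z,R)$ in $B(z,R)$ at $v$), so asserting it outright is circular. Closing the gap requires either an ``almost intersecting implies intersecting'' argument in the spirit of Lemma~\ref{Lem:almost 3-hyperconvex}, or what the paper actually does for this entire direction: a single appeal to Lemma~\ref{Lem:IntersectionWEH}, which gives $A\cap B(z,R)\in\mathcal{W}(X)$ (hence proximinal from every point of $X$, in particular from $v$) and in fact makes the whole implication immediate without any Helly argument.
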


\begin{proof}
If $A \in \mathcal{W}(X)$ then $A \cap B(z,R) \in \mathcal{W}(B(z,R))$ by Lemma~\ref{Lem:IntersectionWEH}.

For the other direction, first observe that $A$ is hyperconvex by Lemma~\ref{Lem:HyperconvexBalls}. Let $x \in X$, $x_i \in A$ with $d(x,A) \leq r$, $d(x,x_i) \leq r + r_i$ and $d(x_i,x_j) \leq r_i + r_j$. Define $\bar{B} := A \cap B(x,r)$ and $B_i := A \cap B(x_i,r_i)$. Since for fixed $i,j$ and some $s>0$ there are $z \in X$ and $R>0$ such that $B(x,r+s), B(x_i,r_i+s), B(x_j,r_j+s) \subset B(z,R)$, we have $\bar{B} \cap B_i \cap B_j \neq \emptyset$ and $\bar{B}, B_i \in \mathcal{E}(A)$ by Lemma~\ref{Lem:ExternallyHyperconvexInNbhd}. Hence we can conclude
	$$A \cap B(x,r) \cap \bigcap_i B(x_i,r_i) = \bar{B} \cap \bigcap_i B_i \neq \emptyset$$
by Proposition~\ref{Prop:IntersectionExternallyHyperconvex}.
\end{proof}

\begin{Lem}\label{Lem:ProximinalInNbhd}
	Let $X$ be a hyperconvex metric space and $A\cap B(z,R) \in \mathcal{W}(B(z,R))$ for all $z \in B(A,R)$. Then $A$ is proximinal in $B\left(A, \frac{5}{4}R\right)$, especially for all $\bar{z} \in B\left(A, \frac{5}{4}R\right)$ we have $A \cap B\left(\bar{z},\frac{5}{4}R\right) \neq \emptyset$.
\end{Lem}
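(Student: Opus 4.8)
The plan is to reduce the statement to showing that for each $\bar{z}$ with $d := d(\bar{z},A) \le \tfrac{5}{4}R$ there is a point $a^{\ast} \in A$ with $d(\bar{z},a^{\ast}) = d$; the displayed non-emptiness $A \cap B(\bar{z},\tfrac{5}{4}R) \neq \emptyset$ is then immediate, and $A$ is closed because its local pieces $A \cap B(z,R)$ are proximinal, hence closed, and $X$ is complete. First I would dispose of the easy regime $d \le R$. Here $\bar{z} \in B(A,R)$, so by hypothesis $A \cap B(\bar{z},R) \in \mathcal{W}(B(\bar{z},R))$; since a weakly externally hyperconvex subset is in particular weakly externally $1$-hyperconvex, it is proximinal, and applying proximinality at the center $\bar{z}$ yields an exact nearest point (one checks $d(\bar{z},\, A \cap B(\bar{z},R)) = d$, which holds because near-projections already lie in the ball once $d < R$). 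The substantial regime is $R < d \le \tfrac{5}{4}R$, where $\bar{z} \notin B(A,R)$.

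The clean reduction I would use in the hard regime is the following: it suffices to produce an intermediate point $z_1 \in B(A,R)$ that lies on an exact shortest connection to $A$, i.e. with $d(\bar{z},z_1) + d(z_1,A) = d$. Indeed $z_1 \in B(A,R)$ means $d(z_1,A) \le R$, so the hypothesis gives $A \cap B(z_1,R) \in \mathcal{W}(B(z_1,R))$, which is proximinal; choosing a nearest point $a^{\ast} \in A$ of $z_1$ we get $d(\bar{z},a^{\ast}) \le d(\bar{z},z_1) + d(z_1,a^{\ast}) = d(\bar{z},z_1) + d(z_1,A) = d$, so $a^{\ast}$ realizes the distance. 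Such a $z_1$ is exactly a nearest point of $\bar{z}$ in the closed neighbourhood $B(A,R)$, necessarily with $d(z_1,A)=R$ and $d(\bar{z},z_1)=d-R$. Thus the whole problem collapses to finding a nearest point of $\bar{z}$ in $B(A,R)$, and the constant $\tfrac{5}{4}$ enters precisely to keep all auxiliary centers inside radius-$R$ balls on which the hypothesis is available: the excess $d-R$ is at most $\tfrac{1}{4}R$, and the geodesic midpoint of $[\bar{z},q]$ for any near-projection $q$ lands in $B(A,R)$ since it is within $\tfrac{1}{2}d \le \tfrac{5}{8}R < R$ of $A$.

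To construct $z_1$ I would iterate a distance-halving step. Starting from a near-projection, the first midpoint already enters $B(A,R)$; from inside $B(A,R)$ every further step can be taken \emph{exactly}, because local proximinality supplies genuine nearest points and the midpoint of a point and its exact nearest point in $A$ halves the distance to $A$ precisely. This produces a Cauchy sequence whose displacements telescope to a limit in $A$. A more robust variant is a two-ball step: taking $z_k$ the midpoint of $[\bar{z},a_k]$ one has $a_k,\bar{z} \in B(z_k,R)$ and $z_k \in B(A,R)$, so external $2$-hyperconvexity of $A \cap B(z_k,R)$ inside $(A\cap B(z_k,R))\cup\{\bar{z}\}$, applied to balls around $\bar{z}$ and $a_k$, returns $a_{k+1}\in A$ with $d(\bar{z},a_{k+1}) = d(\bar{z},\,A\cap B(z_k,R))$ and $d(a_k,a_{k+1}) \le d(\bar{z},a_k) - d(\bar{z},\,A\cap B(z_k,R))$, a bound that telescopes to give a Cauchy sequence converging to some $a^{\ast}\in A$. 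The main obstacle, and the delicate point, is to show that the limiting distance equals $d$ exactly, equivalently that the excess $d(\bar{z},a_k)-d$ tends to $0$ rather than stalling at a non-global local projection. Mere proximinality of the local pieces does not suffice: in a general hyperconvex space the bicombing need only be conical, not straight, so one cannot directly invoke $\sigma$-convexity of $A \cap B(z_k,R)$ via Proposition~\ref{Prop:convexity of EH}. I expect this to be exactly where the full weakly externally hyperconvex hypothesis is needed, guaranteeing that the nearest point of $A \cap B(z_k,R)$ to $\bar{z}$ strictly improves whenever $a_k$ is not already globally nearest, so that the residual slack of the initial entry into $B(A,R)$ can be driven to zero.
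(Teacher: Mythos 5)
Your reduction is sound as far as it goes---the case $d(\bar z,A)\le R$ is immediate from local proximinality, and producing a point $z_1\in B(A,R)$ with $d(\bar z,z_1)+d(z_1,A)=d(\bar z,A)$ would indeed finish the hard case---but the proposal stops exactly at the step that carries all the content of the lemma. You write that you ``expect'' the weak external hyperconvexity hypothesis to force the excess $d(\bar z,a_k)-d$ to zero rather than stalling at a non-global local projection, and you give no argument for this. Your own analysis shows why it is not automatic: in the limit your iteration computes $d(\bar z, A\cap B(z^*,R))$ for a single limiting midpoint $z^*$, and nothing you have established prevents every globally nearest point of $A$ from lying outside $B(z^*,R)$ (its distance to $z^*$ is only bounded by roughly $\tfrac{15}{8}R$). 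Note also that even the existence of the limiting midpoint is delicate, since midpoints in a hyperconvex space are not unique. So the proposal is a correct framing of the problem plus an unproved conjecture at the decisive moment; it is not a proof.

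The paper closes this gap by a different mechanism. Instead of iterating projections, it fixes one local piece $A\cap B(z_{R/4},R)$, where $z_{R/4}$ is an approximate nearest point of $\bar z$ in $B(A,R)$, and uses the fact that weak external hyperconvexity gives a \emph{non-expansive} proximinal retraction $\rho$ of $B(z_{R/4},R)$ onto it. Applying $\rho$ to the whole family of approximate nearest points $z_\epsilon$ produces points $\rho(z_\epsilon)\in A$ that are pairwise within $2t+\epsilon+\epsilon'$ of each other (writing $d(\bar z,A)=R+t$), so a single application of hyperconvexity of $X$ yields a point $y\in B(\bar z,R)\cap\bigcap_{\epsilon} B(\rho(z_\epsilon),t+\epsilon)$; then $d(y,A)\le t$ together with $d(\bar z,y)\le R$ forces $d(y,A)=t$ exactly, and one local projection of $y$ gives the nearest point of $\bar z$. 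The $1$-Lipschitz property of the retraction is precisely the ingredient your scheme lacks: it makes the candidate points in $A$ cluster, so the slack is removed in one step rather than hoped away in the limit.
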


\begin{proof}
	Let $\bar{z} \in B(A, \frac{5}{4}R)$ and $d(\bar{z},A) = R + t$ with $t \in \left[0,\frac{R}{4}\right]$. Then for every $\epsilon \in \left(0, \frac{R}{4}\right]$ there is some $z_\epsilon \in B(A,R)$ with $d(z,z_\epsilon) \leq t + \epsilon$. Especially, we have $d(z_\epsilon,z_{\frac{R}{4}}) \leq R$, i.e. $z_\epsilon \in B(z_{\frac{R}{4}},R)$, and by our assumption there is some proximinal non-expansive retraction $\rho \colon B(z_{\frac{R}{4}},R) \to A \cap B(z_{\frac{R}{4}},R)$.
	Since $d(\rho(z_\epsilon),\bar{z}) \leq R + t + \epsilon$ and $d(\rho(z_\epsilon),\rho(z_{\epsilon'})) \leq d(z_\epsilon,z_{\epsilon'}) \leq 2t + \epsilon + \epsilon'$ there is some
$$y \in B(\bar{z},R) \cap \bigcap_{\epsilon} B\left(\rho(z_\epsilon),t + \epsilon\right).$$
Finally, we have $d(y,A) = t \leq R$ and hence there is some $\bar{y} \in A$ with $d(\bar{y},y)=t$ and thus $d(\bar{y},\bar{z}) \leq d(\bar{y},y) + d(y,\bar{z}) \leq t + R = d(\bar{z},A)$.
\end{proof}

We are now able to prove our local-to-global result for (weakly) externally hyperconvex subsets of a metric space with a geodesic bicombing.

\begin{Prop}\label{Prop:LocallyExternallyHyperconvexSigmaConvex}
	Let $X$ be a hyperconvex metric space with a geodesic bicombing $\sigma$. If $A \subset X$ is $\sigma$-convex and uniformly locally (weakly) externally hyperconvex, then $A$ is (weakly) externally hyperconvex in $X$.
\end{Prop}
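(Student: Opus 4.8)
The plan is to prove the equivalent statement that $A \cap B(z,R) \in \mathcal{W}(B(z,R))$ in the weakly external case (respectively $A \cap B(z,R) \in \mathcal{E}(B(z,R))$ in the external case) for every $z \in X$ and every $R \ge d(z,A)$; by Lemma~\ref{Lem:BoundedWEH} (respectively Lemma~\ref{Lem:BoundedExternallyHyperconvex}) this is equivalent to $A \in \mathcal{W}(X)$ (respectively $A \in \mathcal{E}(X)$), and in particular the passage from these bounded pieces to the full infinite intersections is already carried out inside those lemmas via the Helly-type Proposition~\ref{Prop:IntersectionExternallyHyperconvex}. I would establish the bounded statement by an induction that enlarges the admissible radius $R$ by a fixed factor greater than $1$ at each step, so that every $R$ is eventually reached. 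Throughout one uses that closed balls in a hyperconvex space are again hyperconvex (Lemma~\ref{Lem:HyperconvexBalls}), that $X$ is complete, and that we always work with the global bicombing $\sigma$ on $X$ and the global $\sigma$-convexity of $A$ (rather than trying to restrict $\sigma$ to a ball, which need not be $\sigma$-convex).

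For the base case, fix a uniform radius $r > 0$ witnessing local (weak) external hyperconvexity. If $R$ is small enough that $B(z,R) \subseteq B(a,r)$ for a point $a \in A$ realizing $d(z,A)$ up to an arbitrarily small error (which holds for, say, $R \le r/2$), then $A \cap B(z,R)$ is the intersection of the (weakly) externally hyperconvex piece $A \cap B(a,r)$ with a ball, hence again (weakly) externally hyperconvex by Lemma~\ref{Lem:IntersectionWEH} together with Lemma~\ref{Lem:LocallyExternallyHyperconvex}. The inductive step is the heart of the argument and models itself on Lemma~\ref{Lem:ProximinalInNbhd}, which performs exactly such a radius enlargement for the proximinality (one-ball) case, passing from radius $R$ to radius $\tfrac{5}{4}R$. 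Assuming the statement at radius $R$ for all centers, one first obtains from Lemma~\ref{Lem:ProximinalInNbhd} that $A$ is proximinal in $B\!\left(A,\tfrac{5}{4}R\right)$ together with proximinal nonexpansive retractions onto the local pieces $A \cap B(z',R)$. Given a family of balls realizing a (weak) external hyperconvexity problem inside $B\!\left(z,\tfrac{5}{4}R\right)$, I would use these retractions together with $\sigma$ to push the data into balls of radius $R$, solve the resulting problem by the inductive hypothesis, and iterate. The essential quantitative input is $\sigma$-convexity of $A$: Lemma~\ref{Lem:DinstanceToConvexSet} controls how $d(\cdot,A)$ behaves along the geodesics $\sigma_{xy}$, which is what keeps the compatibility conditions $d(x_i,x_j) \le r_i + r_j$ and $d(x_i,A) \le r_i$ valid after each projection and forces the successive approximations to form a Cauchy sequence; completeness of $X$ then produces an honest point of the desired intersection.

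I expect the main obstacle to be precisely this enlargement step: organizing the iteration so that the geometric loss between $R$ and $\tfrac{5}{4}R$ is absorbed by the $\sigma$-convexity estimates, and checking that \emph{all} ball-compatibility inequalities survive the retractions, so that the inductive hypothesis genuinely applies at every stage and the produced sequence converges. Lemma~\ref{Lem:ProximinalInNbhd} is the template, but it must here be carried out for arbitrary (possibly infinite) families of balls, and, in the weakly external case, with the additional external center $x$ bookkept throughout the iteration; handling this extra center while maintaining the distance bounds is the technically delicate point that separates the weakly external statement from the external one.
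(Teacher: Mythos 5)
Your overall architecture coincides with the paper's: reduce to showing $A\cap B(z,R)\in\mathcal{E}(B(z,R))$ (resp.\ $\mathcal{W}(B(z,R))$) for all $z$ and $R$ via Lemma~\ref{Lem:BoundedExternallyHyperconvex} / Lemma~\ref{Lem:BoundedWEH}, get the base case from the uniform local hypothesis, and run an outer induction $\mathsf{P}(R)\Rightarrow\mathsf{P}(\tfrac{5}{4}R)$ using Lemma~\ref{Lem:ProximinalInNbhd} and the $\sigma$-convexity estimate of Lemma~\ref{Lem:DinstanceToConvexSet}. However, the content of that induction step --- which you yourself flag as ``the heart of the argument'' and ``the technically delicate point'' --- is not actually supplied, and the mechanism you sketch for it (retract the centers onto the local pieces, solve at scale $R$, iterate to a Cauchy limit) is not the one the paper uses and is not obviously workable as stated. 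Inside $B(z,\tfrac{5}{4}R)$ the centers can be up to $\tfrac{5}{2}R$ apart and the relevant radii up to $\tfrac{5}{2}R$, so no single application of $\mathsf{P}(R)$, and no single retraction onto some $A\cap B(z',R)$, can see the whole configuration; moreover a point of $A$ lying in balls around retracted centers $\rho(x_i)$ need not lie in the original balls $B(x_i,r_i)$, and you give no contraction estimate that would make your ``successive approximations'' Cauchy.

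The paper closes exactly this gap with a second, inner induction on the radius bound $R'$ of the ball family. The base case $\mathsf{P}'(\tfrac{R}{2})$ follows directly from $\mathsf{P}(R)$, since for radii $\le\tfrac{R}{2}$ the whole configuration fits into a single ball $B(\bar z,R)$. The step $\mathsf{P}'(R')\Rightarrow\mathsf{P}'(2R')$ is a radius-halving argument: one replaces the centers $x_i$ by points $\bar x_i$ chosen near the bicombing midpoints $y_{ij}=\sigma_{x_ix_j}(\tfrac12)$ and $z_i=\sigma_{x_iz}(\tfrac12)$, inside the sets $A_i=A\cap B(z_i,R)\cap B(z,\tfrac{5}{4}R)$, with the radii $r_i$ replaced by $r_i/2$; the convexity inequality of $\sigma$ together with Lemma~\ref{Lem:DinstanceToConvexSet} (this is precisely where $\sigma$-convexity of $A$ enters) shows that the halved configuration again satisfies all compatibility conditions, so finitely many halvings reduce any family with radii $\le\tfrac{5}{2}R$ to the base case --- no Cauchy sequence is needed in this step. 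In the weakly external case the external center $x$ is carried along with its own midpoints $y_i=\sigma_{xx_i}(\tfrac12)$ and $\bar z=\sigma_{xz}(\tfrac12)$. Without this halving induction, or a fully worked-out substitute for it, your proposal identifies the right scaffolding but does not yet constitute a proof.
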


\begin{proof}
The two cases are similar but we prove them separately to be precise.
\\ \mbox{} \\
\noindent \textbf{\textit{Case 1.}} Let $A$ be $\sigma$-convex and uniformly locally externally hyperconvex. 

	Let $\mathsf{P}(R)$ be the property given by the following expression:
\begin{tabbing}
$\mathsf{P}(R)$: \= $  \forall z \in B(A,R) : A \cap B(z,R) \in \mathcal{E}(B(z,R)).$
\end{tabbing}

	We want to show that this is true for every $R>0$. Then, by Lemma~\ref{Lem:BoundedExternallyHyperconvex}, $A$ is externally hyperconvex in $X$.
	
	Since $A$ is uniformly locally externally hyperconvex $\mathsf{P}(R)$ clearly holds for some small $R > 0$. Hence we need to show that $\mathsf{P}(R) \Rightarrow \mathsf{P}(\frac{5}{4}R)$.
	
	For $z \in B\left(A, \frac{5}{4}R\right)$, let
\begin{align*}
	X':=B\left(z,\tfrac{5}{4}R\right) \text{, } A' := A \cap X' \text{ and } B'(x,r) := B(x,r) \cap X' \text{ for } x\in X'.
\end{align*}
	By Lemma~\ref{Lem:ExternallyHyperconvexInNbhd} it is enough to show that
\begin{align}\label{Eq:ExternallyHyperconvex}
	A' \in \mathcal{E}\left(B'\left(A',\tfrac{R}{4}\right)\right).
\end{align}
	First note that $A' \neq \emptyset$ by Lemma~\ref{Lem:ProximinalInNbhd}.
	
	We establish now the following property for all $R'>0$:
\begin{tabbing}
$\mathsf{P}'(R')$: \= $\forall \{x_i\}_i \subset B'(A',\tfrac{R}{4})$ with $d(x_i,x_j) \leq r_i + r_j$, $d(x_i,A') \leq r_i$ and $r_i \leq R'$, \\
\> we have $A' \cap \bigcap_i B'(x_i,r_i) \neq \emptyset.$
\end{tabbing}

	Observe that $\mathsf{P}'(\frac{5}{2}R)$ implies (\ref{Eq:ExternallyHyperconvex}) since balls with center in $B(z,\frac{5}{4}R)$ and radius bigger than $\frac{5}{2}R$ contain $B(z,\frac{5}{4}R)$.
\\ \mbox{} \\
\noindent \textbf{Step I.} $\mathsf{P}'(\frac{R}{2})$ holds.
\\ \mbox{} \\
In this case we have $d(x_i,z)\leq \frac{5}{4}R$, $d(x_i,x_j) \leq R$ and hence there is some $\bar{z}\in B(z,R) \cap \bigcap_i B(x_i,\frac{R}{2})$. Then we have $d(\bar{z},A) \leq d(\bar{z},x_i) + d(x_i,A) \leq R$ and $A \cap B(\bar{z},R) \in \mathcal{E}(B(\bar{z},R))$ by $\mathsf{P}(R)$.

Clearly, $z, x_i \in B(\bar{z},R)$. Therefore, if we have $d(x_i,A \cap B(\bar{z},R)) \leq r_i$ and $d(z ,A \cap B(\bar{z},R)) \leq \frac{5}{4}R$, there is some
$$y \in A \cap B(\bar{z},R) \cap B\left(z,\tfrac{5}{4}R\right) \cap \bigcap_i B(x_i,r_i) \neq \emptyset$$
and thus $y \in A' \cap \bigcap_i B'(x_i,r_i) \neq \emptyset$.

Indeed, since for every $\epsilon \in \left(0,\frac{R}{4}\right]$ there is some $a_\epsilon \in A'= A \cap B\left(z,\frac{5}{4}R\right)$ with $d(a_\epsilon,x_i) \leq \min \left\{r_i,\frac{R}{4}\right\}+\epsilon$. It follows
\begin{align*}
	d(a_\epsilon,\bar{z}) \leq d(a_\epsilon,x_i) + d(x_i,\bar{z}) \leq \tfrac{R}{4} + \epsilon + \tfrac{R}{2} \leq R.
\end{align*}
and therefore $a_\epsilon \in A \cap B(\bar{z},R)$. Hence we get
\begin{align*}
	d(x_i,A \cap B(\bar{z},R)) &\leq d(x_i,a_\epsilon) \leq r_i+\epsilon \text{, for all } \epsilon>0 \text{, i.e.} \\
	d(x_i,A \cap B(\bar{z},R)) &\leq r_i \text{, and} \\
	d(z ,A \cap B(\bar{z},R)) &\leq d(z,a_\epsilon) \leq \tfrac{5}{4}R,
\end{align*}
as desired.
\\ \mbox{} \\
\noindent \textbf{Step II.} We establish $\mathsf{P}'(R') \Rightarrow \mathsf{P}'(2R')$.
\\ \mbox{} \\
	Let $y_{ij}=\sigma_{x_ix_j}\left( \frac{1}{2} \right) $ and $z_i = \sigma_{x_iz}\left( \frac{1}{2}\right) $.
	We get
\begin{align*}
	d(z_i,A) &\leq d(z_i,x_i) + d(x_i,A) \leq \tfrac{5}{8}R + \tfrac{R}{4} \leq R, \\
	d(y_{ij},z_i) &\leq \tfrac{1}{2}d(x_j,z) \leq \tfrac{5}{8}R \leq R, \\
	d(z,z_i) &= \tfrac{1}{2}d(z,x_i) \leq \tfrac{5}{8}R \leq R,
\end{align*}
	i.e. $y_{ij},z \in B(z_i,R)$, and $A \cap B(z_i,R) \in \mathcal{E}(B(z_i,R))$ by $\mathsf{P}(R)$.
	Note that there is some $a \in A' = A \cap B(z,\frac{5}{4}R)$ with $d(a,x_i) \leq d(x_i,A') + \frac{R}{8} \leq \frac{3}{8}R$ and thus $$d(z_i,a) \leq d(z_i,x_i)+ d(x_i,a) \leq \tfrac{5}{8}R + \tfrac{3}{8}R = R.$$ Hence we get $a \in A_i := A \cap B(z_i,R) \cap B(z, \tfrac{5}{4}R) \neq \emptyset$ and therefore $A_i \in \mathcal{E}(B(z_i,R))$.
	Moreover, we have
\begin{align*}
	d(y_{ij},y_{ik}) &\leq \tfrac{1}{2}d(x_j,x_k) \leq \tfrac{r_j}{2} + \tfrac{r_k}{2}, \\
	d(y_{ij},A') &\leq  \tfrac{1}{2} \left( d(x_i,A') + d(x_j,A') \right) \leq \min\{ \tfrac{r_i}{2} + \tfrac{r_j}{2}, \tfrac{R}{4}\}
\end{align*}
	by Lemma~\ref{Lem:DinstanceToConvexSet}.
	For $\epsilon \in \left(0,\frac{R}{8} \right]$, let $a_\epsilon \in A'$ with $d(y_{ij},a_\epsilon) \leq d(y_{ij},A') + \epsilon$. Then we have $d(a_\epsilon,z_i) \leq d(a_\epsilon,y_{ij}) + d(y_{ij},z_i) \leq \frac{R}{4}+ \epsilon + \frac{5}{8}R \leq R$, i.e. $a_\epsilon \in A_i$, and therefore $$d(y_{ij},A_i) = d(y_{ij},A') \leq \min\{ \tfrac{r_i}{2} + \tfrac{r_j}{2}, \tfrac{R}{4}\}.$$
	Hence there are $\bar{x}_i \in B(A_i,\min\{\tfrac{r_i}{2},\tfrac{R}{4}\}) \cap B(z,\tfrac{5}{4}R) \cap \bigcap_j B(y_{ij},\frac{r_j}{2})$ with
\begin{align*}
	d(\bar{x}_i,\bar{x}_j) &\leq d(\bar{x}_i,y_{ij})+d(y_{ij},\bar{x}_j) \leq \tfrac{r_i}{2} + \tfrac{r_j}{2}, \\
	d(\bar{x}_i,A') &\leq d(\bar{x}_i,A_i) \leq \min\{\tfrac{r_i}{2},\tfrac{R}{4}\}.
\end{align*}
	Therefore by $\mathsf{P'}(R')$ we get $y\in A' \cap \bigcap_i B(\bar{x}_i,\frac{r_i}{2}) \subset A' \cap \bigcap_i B(x_i,r_i) \neq \emptyset$ as desired.
\\ \mbox{} \\
\noindent \textbf{\textit{Case 2.}} Let $A$ be $\sigma$-convex and uniformly locally weakly externally hyperconvex. 

Let $\mathsf{P}(R)$ be the property given by the following expression:
\begin{tabbing}
$\mathsf{P}(R)$: \= $\forall z \in B(A,R) : A \cap B(z,R) \in \mathcal{W}(B(z,R)).$
\end{tabbing}
	
	We want to show that this is true for every $R>0$. Then, by Lemma~\ref{Lem:BoundedWEH}, $A$ is weakly externally hyperconvex in $X$.

	Since $A$ is uniformly locally externally hyperconvex $\mathsf{P}(R)$ clearly holds for some small $R > 0$. Hence we need to show that $\mathsf{P}(R) \Rightarrow \mathsf{P}(\frac{5}{4}R)$.
	
	For $z \in B\left(A, \frac{5}{4}R\right)$, let
\begin{align*}
	X':=B\left(z,\tfrac{5}{4}R\right) \text{, } A' := A \cap X' \text{ and } B'(x,r) := B(x,r) \cap X' \text{ for } x\in X'.
\end{align*}
	By Lemma~\ref{Lem:WEHInNbhd} it is enough to show that 
\begin{align}\label{Eq:WEHInNbhd}
	A' \in \mathcal{W}\left( B'\left( A',\tfrac{R}{4}\right) \right).
\end{align}	
	First note that $A' \neq \emptyset$ by Lemma~\ref{Lem:ProximinalInNbhd}.
	
	We establish now the following property for all $R'>0$:
\begin{tabbing}
$\mathsf{P}'(R')$: \= $\forall x \in B'\left( A',\tfrac{R}{4}\right)$, $\{x_i \}_i \subset A'$ with $d(x,A') \leq r$, $d(x,x_i) \leq r + r_i$, \\
\> $d(x_i,x_j) \leq r_i + r_j$ and $r,r_i \leq R'$, \\
	\> we have $A' \cap B'(x,r) \cap \bigcap_i B'(x_i,r_i) \neq \emptyset.$
\end{tabbing}

	Observe that $\mathsf{P}'(\tfrac{5}{2}R)$ implies (\ref{Eq:WEHInNbhd}) since balls with center in $B(z,\tfrac{5}{4}R)$ and radius bigger than $\tfrac{5}{2}R$ contain $B(z,\tfrac{5}{4}R)$.
\\ \mbox{} \\
\noindent \textbf{Step I.} $\mathsf{P}'(\frac{R}{2})$ holds.
\\ \mbox{} \\
In this case we have $d(x,z),d(x_i,z)\leq \frac{5}{4}R$, $d(x,x_i),d(x_i,x_j) \leq R$ and hence there is some $\bar{z}\in B(z,R) \cap B(x,\frac{R}{2}) \cap \bigcap_i B(x_i,\frac{R}{2})$.

For $\epsilon \in (0,\frac{R}{4}]$, let $a_\epsilon \in A'$ with $d(x,a_\epsilon) \leq d(x,A') + \epsilon \leq \frac{R}{2}$. Then we have $d(\bar{z},A) \leq d(\bar{z},x) + d(x,a_\epsilon) \leq R$, $z, x, x_i \in B(\bar{z},R)$ and $a_\epsilon\in A \cap B(\bar{z},R) \cap B(z, \frac{5}{4}R) \neq \emptyset$. Hence $A \cap B(\bar{z},R) \cap B(z,\tfrac{5}{4}R) \in \mathcal{W}(B(\bar{z},R))$ by $\mathsf{P}(R)$ and Lemma~\ref{Lem:IntersectionWEH}, and $d(x,A \cap B(\bar{z},R) \cap B(z,\tfrac{5}{4}R)) \leq r$. Therefore there is some 
$$y \in A \cap B(\bar{z},R) \cap B(z,\tfrac{5}{4}R) \cap B(x,r) \cap \bigcap_i B(x_i,r_i) \neq \emptyset$$
and thus $y \in A' \cap B'(x,r) \cap \bigcap_i B'(x_i,r_i) \neq \emptyset$.
\\ \mbox{} \\
\noindent \textbf{Step II.} We establish $\mathsf{P}'(R') \Rightarrow \mathsf{P}'(2R')$.
\\ \mbox{} \\
	Let $y_i=\sigma_{xx_i}(\frac{1}{2})$, $y_{ij}=\sigma_{x_ix_j}(\frac{1}{2})$, $\bar{z} = \sigma_{xz}(\frac{1}{2})$ and $z_i = \sigma_{x_iz}(\frac{1}{2})$. Moreover, let $\bar{a}_\epsilon \in  A'$ such that $d(x,\bar{a}_\epsilon) \leq d(x,A') + \epsilon \leq \frac{3}{8}R$.
	We get
\begin{align*}
	d(\bar{z},\bar{a}_\epsilon) &\leq d(\bar{z},x) + d(x,\bar{a}_\epsilon) \leq \tfrac{5}{8}R + \tfrac{R}{4} \leq R, \\
	d(\bar{z},A) &\leq d(\bar{z},\bar{a}) \leq R, \\
	d(z_i,A) &\leq d(z_i,x_i) \leq \tfrac{5}{8}R \leq R, \\
	d(y_{ij},z_i) &\leq \tfrac{1}{2}d(x_j,z) \leq \tfrac{5}{8}R \leq R, \\
	d(y_i,z_i) &\leq \tfrac{1}{2}d(x,z) \leq \tfrac{5}{8}R \leq R, \\	
	d(z,z_i) &= \tfrac{1}{2}d(z,x_i) \leq \tfrac{5}{8}R \leq R,
\end{align*}
	i.e. $y_i, y_{ij}, z \in B(z_i,R)$, and $A \cap B(z_i,R) \in \mathcal{W}(B(z_i,R))$ by $\mathsf{P}(R)$.
	Define $A_i := A \cap B(z_i,R) \cap B(z, \tfrac{5}{4}R)$ and $\bar{A} := A \cap B(\bar{z},R) \cap B(z, \tfrac{5}{4}R)$. We have $y_{ij} \in A_i \neq \emptyset$ and $\bar{a}_\epsilon \in \bar{A} \neq \emptyset$ and therefore $A_i \in \mathcal{W}(B(z_i,R))$ and $\bar{A} \in \mathcal{W}(B(\bar{z},R))$ by Lemma~\ref{Lem:IntersectionWEH}. Furthermore, we get
\begin{align*}
	d(x,\bar{A}) &\leq d(x,\bar{a}_\epsilon) \leq r + \epsilon, \text{ i.e. } d(x,\bar{A}) \leq r, \\
	d(y_i,\bar{A}) &\leq  \tfrac{1}{2} d(x,\bar{A}) \leq \tfrac{r}{2} \text{ (Lemma~\ref{Lem:DinstanceToConvexSet})}, \\
	d(x,y_i) &= \tfrac{1}{2}d(x,x_i) \leq \tfrac{r}{2} + \tfrac{r_i}{2}, \\
	d(y_{i},y_{j}) &\leq \tfrac{1}{2}d(x_i,x_j) \leq \tfrac{r_i}{2} + \tfrac{r_j}{2}, \\
	d(y_{i},y_{ij}) &\leq \tfrac{1}{2}d(x,x_j) \leq \tfrac{r}{2} + \tfrac{r_j}{2}, \\	
	d(y_{ij},y_{ik}) &\leq \tfrac{1}{2}d(x_j,x_k) \leq \tfrac{r_j}{2} + \tfrac{r_k}{2}.
\end{align*}
	Hence there are $\bar{x} \in B(\bar{A},\frac{r}{2}) \cap B(x,\frac{r}{2}) \cap \bigcap_i B(y_i,\frac{r_i}{2})$, $\bar{x}_i \in A_i \cap B(y_i,\frac{r}{2}) \cap \bigcap_j B(y_{ij},\frac{r_j}{2})$ with 
\begin{align*}
	d(\bar{x},\bar{x}_i) &\leq d(\bar{x},y_i)+d(y_i,\bar{x}_i) \leq \tfrac{r}{2} + \tfrac{r_i}{2}, \\
	d(\bar{x}_i,\bar{x}_j) &\leq d(\bar{x}_i,y_{ij})+d(y_{ij},\bar{x}_j) \leq \tfrac{r_i}{2} + \tfrac{r_j}{2}.
\end{align*}	
	Therefore by $\mathsf{P'}(R')$ we get
	$$y\in A' \cap B(\bar{x},\tfrac{r}{2}) \cap \bigcap_i B(\bar{x}_i,\tfrac{r_i}{2}) \subset A' \cap B'(x,r) \cap \bigcap_i B'(x_i,r_i) \neq \emptyset$$
	as desired.
\end{proof}

\begin{proof}[Proof of Theorem~\ref{Thm:BicombingTheorem}]
	The first equivalence follows directly from Proposition~\ref{Prop:convexity of EH} and Proposition~\ref{Prop:LocallyExternallyHyperconvexSigmaConvex}.
	
	For the second equivalence note first that by the uniqueness of straight lines, $\sigma$ is a consistent geodesic bicombing and thus, if $A$ is a $\sigma$-convex subset of $X$, it possesses a consistent geodesic bicombing. The implications then follow from Proposition~\ref{Prop:convexity of WEH} and Proposition~\ref{Prop:LocallyExternallyHyperconvexSigmaConvex}.
\end{proof}

Together with Theorems~1.1 and 1.2 in \cite{DesL}, we conclude the following.

\begin{Cor}\label{Cor:LocallyWEHSigmaConvex}
	Let $X$ be a proper, hyperconvex metric space with finite combinatorial dimension and let $\sigma$ be the unique consistent convex geodesic bicombing on $X$. Then $A \subset X$ is weakly externally hyperconvex in $X$ if and only if $A$ is $\sigma$-convex and locally weakly externally hyperconvex in $X$.
\end{Cor}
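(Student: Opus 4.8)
The plan is to reduce everything to Theorem~\ref{Thm:BicombingTheorem}(II). Only two points separate the present statement from that theorem. First, its standing hypothesis that straight lines in $X$ be unique is automatic here: since $X$ has finite combinatorial dimension, straight curves in $X$ are unique by \cite[Proposition~4.3]{DesL}, so the bicombing $\sigma$ in the statement is exactly the unique consistent convex bicombing furnished by \cite[Theorems~1.1 and~1.2]{DesL}. Second, Theorem~\ref{Thm:BicombingTheorem}(II) is phrased in terms of \emph{uniformly} locally weakly externally hyperconvex subsets, whereas here we only assume (and only wish to conclude) the \emph{locally} weakly externally hyperconvex condition. The forward implication will be immediate, while the converse is the substantive direction, where properness is used to pass from a local to a uniform local radius on bounded pieces of $A$.

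For the forward implication, suppose $A\in\mathcal{W}(X)$. Then $A$ is locally weakly externally hyperconvex: for any $x\in A$ and any $r_x>0$, Lemma~\ref{Lem:IntersectionWEH} applied to the externally hyperconvex ball $B(x,r_x)$ gives $A\cap B(x,r_x)\in\mathcal{W}(X)$, which restricts to $\mathcal{W}(B(x,r_x))$. To see that $A$ is $\sigma$-convex, I would first check that $A$ itself carries a consistent geodesic bicombing: being weakly externally hyperconvex, $A$ is hyperconvex and hence admits a geodesic bicombing; being proximinal it is closed, hence a proper metric space, so \cite[Theorem~1.1]{DesL} upgrades this to a convex bicombing; finally $A$ inherits finite combinatorial dimension from $X$, so its straight curves are unique and this convex bicombing is consistent. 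Proposition~\ref{Prop:convexity of WEH} then applies and yields that $A$ is $\sigma$-convex.

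For the converse, suppose $A$ is $\sigma$-convex and locally weakly externally hyperconvex. By Lemma~\ref{Lem:BoundedWEH} it suffices to prove $A\cap B(z,R)\in\mathcal{W}(B(z,R))$ for every $z\in X$ and every $R\ge d(z,A)$. Fix such $z,R$, pick $R''>R$, and set $Y:=B(z,R'')$, which is compact by properness and hyperconvex; as $\sigma$ is convex, closed balls are $\sigma$-convex, so $\sigma$ restricts to a convex bicombing on $Y$ and $A_0:=A\cap Y$ is $\sigma$-convex. The key observation is that on the compact set $A_0$ the local radius can be chosen uniformly. Writing $\rho(a):=\sup\{\,r>0: A\cap B(a,r)\in\mathcal{W}(X)\,\}$ for $a\in A$, Lemma~\ref{Lem:LocallyExternallyHyperconvex} shows $\rho(a)>0$, and a repeated use of Lemma~\ref{Lem:IntersectionWEH}—intersecting with smaller balls and with balls around nearby centers preserves membership in $\mathcal{W}(X)$—shows $\rho(b)\ge\rho(a)-d(a,b)$, so that $\rho$ is positive and $1$-Lipschitz. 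Hence $\rho$ attains a positive minimum on $A_0$, and taking $r$ to be half this minimum, the same intersection argument (using that $Y\in\mathcal{E}(X)$) shows $A_0$ is uniformly locally weakly externally hyperconvex in $Y$ with radius $r$. Applying Proposition~\ref{Prop:LocallyExternallyHyperconvexSigmaConvex} inside the hyperconvex space $Y$ gives $A_0\in\mathcal{W}(Y)$; since $A\cap B(z,R)=A_0\cap B(z,R)$ with $B(z,R)$ externally hyperconvex in $Y$, Lemma~\ref{Lem:IntersectionWEH} together with restriction yields $A\cap B(z,R)\in\mathcal{W}(B(z,R))$, as required.

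The main obstacle is precisely this local-to-uniform-local upgrade in the converse, which is where properness is essential: without compactness the Lipschitz function $\rho$ could tend to $0$ along an unbounded sequence in $A$, so one cannot expect a globally uniform radius and must instead localize the entire argument to the compact balls $B(z,R'')$. The routine part is then the bookkeeping needed to verify that $\sigma$-convexity, hyperconvexity, and weak external hyperconvexity all restrict correctly to $Y$, which I have sketched above.
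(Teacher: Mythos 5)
The paper offers no written proof of this corollary beyond the one-line remark that it follows ``together with Theorems~1.1 and 1.2 in \cite{DesL}'', so your write-up is in effect supplying the argument the authors leave implicit, and your two reductions are the right ones. Finite combinatorial dimension gives uniqueness of straight lines (hence the consistency and uniqueness of $\sigma$) needed for Theorem~\ref{Thm:BicombingTheorem}(II); the consistent bicombing on $A$ required in its condition~(i) is correctly manufactured from properness of the closed set $A$ together with \cite{DesL}; and the local-to-uniformly-local upgrade via the $1$-Lipschitz radius function $\rho$ on the pieces $A\cap B(z,R'')$, followed by Lemma~\ref{Lem:BoundedWEH}, is exactly the kind of argument the properness hypothesis is there for. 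The Lipschitz estimate $\rho(b)\ge\rho(a)-d(a,b)$ and the restriction bookkeeping via Lemma~\ref{Lem:IntersectionWEH} all check out.

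There is, however, one step that does not follow from the stated hypotheses: the compactness of $A_0=A\cap B(z,R'')$, which you need in order to conclude $\min_{A_0}\rho>0$. This requires $A$ to be closed. Local weak external hyperconvexity only makes $A$ \emph{locally} closed (each $A\cap B(x,r_x)$ is proximinal in $B(x,r_x)$), and $\sigma$-convexity does not help: the open interval $(0,1)\subset\R=l_\infty^1$ is convex and locally weakly externally hyperconvex (for $x\in(0,1)$ and $r_x<\min\{x,1-x\}$ one has $(0,1)\cap B(x,r_x)=B(x,r_x)$), yet it is not proximinal and hence not weakly externally hyperconvex. So without closedness the infimum of $\rho$ over $A_0$ can vanish and, worse, the corollary itself fails; the gap cannot be closed within your argument, since closedness is precisely what the compactness step needs as input. (The uniform version in Theorem~\ref{Thm:BicombingTheorem}(II)(ii) does force closedness --- a limit point of $A$ lies at distance $0$ from some proximinal set $A\cap B(a_n,r)$ --- which is why only the merely local statement is affected.) This is a defect of the corollary as printed at least as much as of your proof, but your proof asserts the compactness of $A_0$ without justification, so you should either add closedness of $A$ as a hypothesis or point out explicitly that it is being assumed.
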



\section{Retracts} \label{Sec:Retracts}


Weakly externally hyperconvex subsets were recognized as the proximinal 1-Lipschitz retracts by Esp\'inola in \cite{Esp}.

\begin{Prop}\cite[Theorem 3.6]{Esp}
Let $X$ be a hyperconvex metric space and let $A \subset X$ be non-empty. Then $A$ is a weakly externally hyperconvex subset of $X$ if and only if $A$ is a proximinal 1-Lipschitz retract of $X$.
\end{Prop}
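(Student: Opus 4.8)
The plan is to prove both directions of the equivalence between weak external hyperconvexity and being a proximinal $1$-Lipschitz retract. The forward implication ($A \in \mathcal{W}(X) \Rightarrow A$ is a proximinal $1$-Lipschitz retract) is where the real work lies, while the converse is comparatively soft. For the converse, suppose $\rho \colon X \to A$ is a proximinal $1$-Lipschitz retraction. Given $x \in X$ and a family $\{B(x_i,r_i)\}_i$ of balls with $x_i \in A$, $d(x,A) \le r$, $d(x,x_i) \le r + r_i$ and $d(x_i,x_j) \le r_i + r_j$, I would first use hyperconvexity of $X$ to find a point $p \in \bigcap_i B(x_i,r_i) \cap B(x,r)$ in the ambient space, and then apply $\rho$. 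Since $\rho$ is $1$-Lipschitz and fixes each $x_i \in A$, we get $d(\rho(p),x_i) = d(\rho(p),\rho(x_i)) \le d(p,x_i) \le r_i$; proximinality is what lets me control $d(\rho(p),x)$ by using that $\rho$ can be taken to move $p$ by at most $d(p,A) \le d(p,x) + d(x,A)$, so that $\rho(p)$ lands in $B(x,r)$ as well. This shows $A \in \mathcal{W}(X)$.

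For the forward direction, the goal is to construct a retraction $\rho \colon X \to A$ that is simultaneously $1$-Lipschitz and proximinal, i.e. $d(x,\rho(x)) = d(x,A)$ for all $x$. The natural approach is a Zorn's Lemma argument in the spirit of the classical Aronszajn--Panitchpakdi extension theorem: consider the poset of partial proximinal $1$-Lipschitz retractions $r \colon D \to A$ defined on subsets $A \subseteq D \subseteq X$, ordered by extension, and take a maximal element. The crux is the one-point extension step: given such an $r$ on $D$ and a point $x_0 \in X \setminus D$, I must find a value $\rho(x_0) \in A$ satisfying $d(\rho(x_0),x_0) = d(x_0,A)$ together with $d(\rho(x_0),r(y)) \le d(x_0,y)$ for every $y \in D$. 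Setting $x := x_0$, $r_0 := d(x_0,A)$ and, for each $y \in D$, $y' := r(y)$ with radius $r_y := d(x_0,y)$, these requirements say exactly that $\rho(x_0)$ must lie in $A \cap B(x_0,r_0) \cap \bigcap_{y \in D} B(y', r_y)$, where the $y'$ lie in $A$. The weak external hyperconvexity of $A$ is precisely tailored to guarantee such a point exists, provided the compatibility inequalities $d(x_0, y') \le r_0 + r_y$ and $d(y',y'') \le r_y + r_{y''}$ hold.

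Verifying those compatibility inequalities is the heart of the argument and I expect it to be the main obstacle. The inequality $d(y',y'') \le r_y + r_{y''}$ reads $d(r(y),r(y'')) \le d(x_0,y) + d(x_0,y'')$, which follows from the $1$-Lipschitz property of $r$ and the triangle inequality through $x_0$. The inequality $d(x_0, r(y)) \le d(x_0,A) + d(x_0,y)$ is where proximinality of $r$ must be invoked: one estimates $d(x_0,r(y)) \le d(x_0,y) + d(y,r(y)) = d(x_0,y) + d(y,A) \le d(x_0,y) + d(x_0,A)$, using that $d(y,A) = d(y,r(y))$ and that $A$ is proximinal so $d(y,A) \le d(x_0,A) + d(x_0,y)$ is not quite automatic — here I must be careful to set up the distances so that the center $x_0$ carries radius exactly $d(x_0,A)$, which is what keeps the extended retraction proximinal. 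Once the extension step is secured, Zorn's Lemma yields a maximal $\rho$ whose domain must be all of $X$ (otherwise it extends), and by construction $\rho$ is a proximinal $1$-Lipschitz retraction onto $A$. The delicate point throughout is maintaining \emph{both} the Lipschitz and the proximinal conditions at every extension, rather than merely nonexpansiveness as in the standard hyperconvexity proof.
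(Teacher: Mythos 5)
First, note that the paper does not prove this proposition at all: it is quoted with a citation to Esp\'inola's Theorem~3.6, so the only in-paper model for your argument is the analogous retraction characterization of \emph{externally} hyperconvex sets proved in Section~5. Your overall strategy (Zorn's lemma on partial proximinal $1$-Lipschitz retractions, with the one-point extension supplied by weak external hyperconvexity) is the right one, and you correctly identify the crux. But the crucial compatibility inequality is not established. You need $d(x_0,r(y))\le d(x_0,A)+d(x_0,y)$ in order to invoke weak external hyperconvexity, and your chain $d(x_0,r(y))\le d(x_0,y)+d(y,A)\le d(x_0,y)+d(x_0,A)$ uses $d(y,A)\le d(x_0,A)$, which is false in general (take $y$ far from $A$ and $x_0$ close to $A$). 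The honest estimates only give $d(x_0,r(y))\le 2d(x_0,y)+d(x_0,A)$ or $\le 2d(x_0,A)+d(x_0,y)$. The inequality you need \emph{is} satisfied by a globally defined proximinal $1$-Lipschitz retraction (via $d(x_0,r(y))\le d(x_0,r(x_0))+d(r(x_0),r(y))$), but at the partial stage $r(x_0)$ is exactly what you are trying to define, so the property must be built into the invariant of the poset and shown to be preserved by the extension step. This is precisely how the paper handles the externally hyperconvex case, where the invariant $d(\rho(x),y)\le\max\{d(x,y),d(A,y)\}$ for \emph{all} $y\in X$ can be propagated because external hyperconvexity tolerates arbitrarily many balls centered off $A$; for weak external hyperconvexity only one off-$A$ center is allowed, so that route is blocked and a genuinely different device is needed. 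As written, the forward direction does not close.

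The converse direction also has a gap. From $p\in B(x,r)\cap\bigcap_i B(x_i,r_i)$ you get $d(\rho(p),x_i)\le r_i$ correctly, but the claim that $\rho(p)\in B(x,r)$ does not follow from your estimate: proximinality gives $d(\rho(p),x)\le d(\rho(p),p)+d(p,x)=d(p,A)+d(p,x)$, and since $d(p,A)$ need not vanish this only yields a bound like $r+\min_i r_i$ (or $3r$ via $d(p,A)\le d(p,x)+d(x,A)$), not $r$. The statement is true, but a single application of $\rho$ to one auxiliary point does not prove it; one needs either an iteration that forces the auxiliary points into $A$ in the limit, or a more structured choice of $p$. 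In summary: both implications are set up along plausible lines, but each contains a step that fails as stated, and the forward direction in particular requires a strengthened induction hypothesis that you have not formulated.
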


Similarly, we can characterize externally hyperconvex subsets as 1-Lipschitz retracts with some further properties.

\begin{Prop}
Let $X$ be a hyperconvex metric space and $A \subset X$ a subset. Then $A$ is externally hyperconvex in $X$ if and only if there is a proximinal 1-Lipschitz retraction $\rho \colon X \to A$ with $d(\rho(x),y) \leq \max\{d(x,y),d(A,y)\}$ for all $x,y \in X$.
\end{Prop}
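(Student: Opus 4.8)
The plan is to prove the two implications separately, dispatching the short reverse direction first. Suppose such a retraction $\rho$ exists, and let $\{B(x_i,r_i)\}_{i\in I}$ be a family with $x_i\in X$, $d(x_i,A)\le r_i$ and $d(x_i,x_j)\le r_i+r_j$. Since $X$ is hyperconvex and these balls pairwise intersect, their total intersection in $X$ is non-empty; choose $q\in\bigcap_{i}B(x_i,r_i)$. I claim $\rho(q)$ already lies in $A\cap\bigcap_i B(x_i,r_i)$: indeed $\rho(q)\in A$, and for each $i$ the defining inequality gives $d(\rho(q),x_i)\le\max\{d(q,x_i),d(A,x_i)\}\le\max\{r_i,r_i\}=r_i$. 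Hence $A$ is externally hyperconvex.

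For the forward direction, assuming $A$ externally hyperconvex, I would construct $\rho$ by Zorn's lemma. Consider the set, ordered by extension, of maps $f\colon D\to A$ with $A\subseteq D\subseteq X$ satisfying (a)~$f|_A=\operatorname{id}$, (b)~$f$ is $1$-Lipschitz, (c)~$d(d,f(d))=d(d,A)$ for all $d\in D$, and (d)~$d(f(d),y)\le\max\{d(d,y),d(A,y)\}$ for all $d\in D$ and $y\in X$. The identity on $A$ satisfies (a)--(d), and unions of chains again satisfy (a)--(d), so a maximal element $\rho$ exists. It then remains to show $\operatorname{dom}(\rho)=X$, after which conditions (a)--(d) furnish exactly a proximinal $1$-Lipschitz retraction with the desired inequality.

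The heart of the argument is the extension step. Given $f$ as above and $x\in X\setminus D$, I need a point $p\in A$ for which $f\cup\{(x,p)\}$ still obeys (a)--(d); equivalently, $p$ must lie in $A\cap B(x,d(x,A))\cap\bigcap_{d\in D}B(f(d),d(x,d))\cap\bigcap_{y\in X}B(y,\max\{d(x,y),d(A,y)\})$. External hyperconvexity of $A$ produces such a $p$ once I verify that each center lies within its radius of $A$ (immediate in all three families) and that any two centers are within the sum of their radii. The one delicate case is compatibility of the proximinal ball $B(x,d(x,A))$ with a Lipschitz ball $B(f(d),d(x,d))$: here I apply (d) to $f(d)$ with $y=x$ to obtain $d(x,f(d))\le\max\{d(x,d),d(x,A)\}\le d(x,A)+d(x,d)$. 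The remaining pairs are routine, using $1$-Lipschitzness of $f$ for two Lipschitz balls and the triangle inequality through $x$ for two $\max$-balls. I expect the main obstacle to be the bookkeeping of carrying (b), (c) and (d) simultaneously; in particular one must \emph{not} drop (b), since (c) and (d) alone only yield $d(f(d),f(d'))\le\max\{d(d,d'),d(d,A)\}$ and hence do not by themselves force the Lipschitz bound.
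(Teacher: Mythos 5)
Your proposal is correct and follows essentially the same route as the paper: the easy direction retracts a point of the total intersection supplied by hyperconvexity of $X$, and the hard direction runs Zorn's lemma over partial retractions, invoking external hyperconvexity of $A$ on exactly the same system of balls (the proximinal ball at $x$, the Lipschitz balls $B(f(d),d(x,d))$, and the max-balls $B(y,\max\{d(x,y),d(A,y)\})$ indexed by all $y\in X$) to extend the domain by one point. Your closing remark that the $1$-Lipschitz condition must be carried as a separate invariant is well taken, since the paper's verification uses $d(\rho(y),\rho(y'))\le d(y,y')$ even though its stated definition of a ``good retraction'' records only the max-inequality.
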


\begin{proof}
	First assume that there is a proximinal 1-Lipschitz retraction $\rho \colon X \to A$, with $d(\rho(x),y) \leq \max\{d(x,y),d(A,y)\}$ for all $x,y \in X$.
	
	Let $B(x_i,r_i)$ be a family of closed balls in $X$ with $d(x_i,x_j) \leq r_i+r_j$ and $d(x_i,A) \leq r_i$. Then by hyperconvexity of $X$ there is some $z \in \bigcap_i B(x_i,r_i).$
We get $d(\rho(z),x_i) \leq \max\{d(z,x_i),d(A,x_i)\} \leq r_i$ and hence
\begin{align*}
	\rho(z) \in A \cap \bigcap_i B(x_i,r_i) \neq \emptyset.
\end{align*}

	For the converse consider the set
\begin{align*}
	\mathcal{F} = \left\{ (Y, \rho) : Y \subset X, \rho \colon Y \to A \text{ a good retraction}  \right\},
\end{align*}
	i.e. $\rho \colon Y \to A$ is a retraction with $d(\rho(x),y) \leq \max\{d(x,y),d(A,y)\} \text{ for all } x,y \in X$. We endow $\mathcal{F}$ with the usual order relation $(Y,\rho) \preccurlyeq (Y',\rho')$ if and only if $Y \subset Y'$ and $\rho'|_Y = \rho$.
	
	Clearly, $\mathcal{F}$ is non-empty since $(A,\id) \in \mathcal{F}$ and for any chain $(Y_i,\rho_i)$ the element $(Y,\rho)$ with $Y = \bigcup Y_i$ and $\rho(x) = \rho_i(x)$ for $x \in Y_i$ is an upper bound. Hence there is some maximal element $(\bar{Y},\bar{\rho}) \in \mathcal{F}$.
	
	Assume that there is some $x_0 \in X \setminus \bar{Y}$. For all $x \in X$, define $r_x = \max\{d(x_0,x),d(A,x)\}$ and for all $y \in \bar{Y}$, define $s_y = d(x_0,y)$. We have
\begin{align*}
	d(x,x') &\leq d(x_0,x) + d(x_0,x') \leq r_x + r_{x'}, \\
	d(x,A) &\leq r_x, \\
	d(\rho(y),\rho(y')) &\leq d(y,y') \leq s_y + s_{y'}, \\
	d(\rho(y),x) &\leq \max \{ d(y,x) , d(A,x) \} \leq s_y + r_x.
\end{align*}
	Hence since $A$ is externally hyperconvex, there is some
\begin{align*}
	z \in A \cap \bigcap_{x \in X} B(x,r_x) \cap \bigcap_{y \in \bar{Y}} B(\rho(y),s_y).
\end{align*}
	But then $(\bar{Y}\cup \{x_0\}, \rho')$ with $\rho'(y) = \bar{\rho}(y)$ for $y \in \bar{Y}$ and $\rho'(x_0) = z$ is a strictly bigger element in $\mathcal{F}$, contradicting maximality of $(\bar{Y},\bar{\rho})$. 
		
	Therefore $\bar{Y}=X$ and $\bar{\rho} \colon X \to A$ is the desired retraction.
\end{proof}


From Sections~\ref{Sec:Convexity} to \ref{Sec:Retracts} and results of \cite{Esp,MieP} we get the following characterizations of (weakly) externally hyperconvex subsets of hyperconvex metric spaces.

\begin{Thm}\label{Thm:ExternallyHyperconvex}
Let $X$ be a hyperconvex metric space and $A \subset X$ a subset. Then the following are equivalent:
\begin{enumerate}[(i)]
\item $A$ is externally hyperconvex in $X$.
\item For all $x \in X$ and for all $r \geq d(x,A)$, $A \cap B(x,r)$ is externally hyperconvex in $B(x,r)$.
\item There is some $x \in X$ such that for all $r \geq d(x,A)$, $A \cap B(x,r)$ is externally hyperconvex in $B(x,r)$.
\item $A$ is externally hyperconvex in $B(A,s)$, for all $s>0$.
\item $A$ is externally hyperconvex in $B(A,s)$, for some $s>0$.
\item There is a proximinal 1-Lipschitz retraction $\rho \colon X \to A$, such that for all $x,y \in X$, one has  
\[
d(\rho(x),y) \leq \max\{d(x,y),d(A,y)\}.
\]
\end{enumerate}
Moreover, if $\sigma$ is a convex geodesic bicombing on $X$, the following is also equivalent:
\begin{enumerate}[(i)]
\setcounter{enumi}{6}
\item $A$ is $\sigma$-convex and uniformly locally externally hyperconvex in $X$.
\end{enumerate} 
\end{Thm}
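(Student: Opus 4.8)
The plan is to prove the equivalences by assembling the results of Sections~\ref{Sec:Convexity}--\ref{Sec:Retracts} into a web of implications; almost every step is a direct citation or a one-line restriction argument, and the only genuinely new point is that the single-basepoint condition~(iii) already forces the uniform condition~(ii).

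First I would set up the backbone. The equivalence (i)$\Leftrightarrow$(ii) is exactly Lemma~\ref{Lem:BoundedExternallyHyperconvex}, whose right-hand side is verbatim~(ii), and (ii)$\Rightarrow$(iii) is trivial. For the neighbourhood conditions I would use that external hyperconvexity passes to intermediate ambient spaces: if $A\in\mathcal{E}(X)$ and $A\subseteq Z\subseteq X$, then every admissible family of balls with centres in $Z$ is admissible in $X$ and has the same trace on $A$, so $A\in\mathcal{E}(Z)$; taking $Z=B(A,s)$ gives (i)$\Rightarrow$(iv), (iv)$\Rightarrow$(v) is trivial, and (v)$\Rightarrow$(i) is Lemma~\ref{Lem:ExternallyHyperconvexInNbhd}. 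Finally, (i)$\Leftrightarrow$(vi) is precisely the Proposition preceding this theorem.

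The crux is (iii)$\Rightarrow$(ii). Let $x_0$ be the basepoint from~(iii), and fix $z\in X$ and $R\geq d(z,A)$; the goal is $A\cap B(z,R)\in\mathcal{E}(B(z,R))$. I would choose $r:=d(x_0,z)+R+1$, so that $B(z,R)\subseteq Y:=B(x_0,r)$ and $r\geq d(x_0,A)$, whence (iii) gives $E:=A\cap Y\in\mathcal{E}(Y)$. Since closed balls in a hyperconvex space are hyperconvex, $Y$ is itself a hyperconvex metric space, and the forward direction of Lemma~\ref{Lem:BoundedExternallyHyperconvex}, applied inside $Y$ to $E$, yields $E\cap B_Y(z,R)\in\mathcal{E}(B_Y(z,R))$; here one checks $d(z,E)\leq R$ because any $a\in A$ with $d(z,a)$ close to $d(z,A)\leq R$ satisfies $d(x_0,a)<r$ by the generous choice of $r$. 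As $B_Y(z,R)=B(z,R)$ and $E\cap B(z,R)=A\cap B(z,R)$, this is exactly~(ii), closing the loop (i)$\Leftrightarrow$(ii)$\Leftrightarrow$(iii) and making (i)--(vi) equivalent.

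For the last equivalence, assume $\sigma$ is a convex geodesic bicombing, so its geodesics are straight. Then Proposition~\ref{Prop:convexity of EH} shows that an externally (in particular externally $2$-)hyperconvex set $A$ is $\sigma$-convex, while condition~(ii) evaluated at points $x\in A$, where $d(x,A)=0$, with a single fixed radius shows that $A$ is uniformly locally externally hyperconvex; this gives (i)$\Rightarrow$(vii). The reverse implication (vii)$\Rightarrow$(i) is Proposition~\ref{Prop:LocallyExternallyHyperconvexSigmaConvex}. I expect the main obstacle to be exactly the step (iii)$\Rightarrow$(ii): the remaining implications are citations or restriction arguments, whereas here one must upgrade information about balls centred at the single point $x_0$ to balls centred anywhere, which I would handle by the localisation-inside-$Y$ trick above, the delicate bookkeeping being to guarantee $B(z,R)\subseteq Y$ and $d(z,E)\leq R$ simultaneously.
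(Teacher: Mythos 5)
Your proposal is correct and follows exactly the route the paper intends: Theorem~\ref{Thm:ExternallyHyperconvex} is stated as a summary, with (i)$\Leftrightarrow$(ii) being Lemma~\ref{Lem:BoundedExternallyHyperconvex}, (v)$\Rightarrow$(i) being Lemma~\ref{Lem:ExternallyHyperconvexInNbhd}, (i)$\Leftrightarrow$(vi) the retraction proposition of Section~\ref{Sec:Retracts}, and (i)$\Leftrightarrow$(vii) following from Propositions~\ref{Prop:convexity of EH} and \ref{Prop:LocallyExternallyHyperconvexSigmaConvex}, together with the trivial restriction arguments you give. The one implication the paper leaves implicit, (iii)$\Rightarrow$(ii), you supply correctly via the auxiliary ball $Y=B(x_0,d(x_0,z)+R+1)$, and your check that $d(z,A\cap Y)=d(z,A)\le R$ is precisely the point that needs verifying.
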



\begin{Thm}\label{Thm:WeaklyExternallyHyperconvex}
Let $X$ be a hyperconvex metric space and $A \subset X$ a subset. Then the following are equivalent:
\begin{enumerate}[(i)]
\item $A$ is weakly externally hyperconvex in $X$.
\item For all $x \in X$ and for all $r \geq d(x,A)$, $A \cap B(x,r)$ is weakly externally hyperconvex in $B(x,r)$.
\item There is some $x \in X$ such that for all $r \geq d(x,A)$, $A \cap B(x,r)$ is weakly externally hyperconvex in $B(x,r)$.
\item $A$ is weakly externally hyperconvex in $B(A,s)$, for all $s>0$.
\item $A$ is weakly externally hyperconvex in $B(A,s)$, for some $s>0$.
\item $A$ is a proximinal 1-Lipschitz retract.
\end{enumerate}
Moreover, if $A$ possesses a convex geodesic bicombing, straight lines in $X$ are unique and $\sigma$ is a convex geodesic bicombing on $X$, the following is also equivalent:
\begin{enumerate}[(i)]
\setcounter{enumi}{6}
\item $A$ is $\sigma$-convex and uniformly locally weakly externally hyperconvex in $X$.
\end{enumerate} 
\end{Thm}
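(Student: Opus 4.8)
The plan is to assemble this theorem from the results already in hand, establishing the equivalences (i)--(vi) first and then adjoining (vii); most implications are direct citations. The equivalence (i)$\Leftrightarrow$(vi) is exactly Esp\'inola's characterization \cite[Theorem~3.6]{Esp} quoted above, and (i)$\Leftrightarrow$(ii) is precisely Lemma~\ref{Lem:BoundedWEH}. For (iv) and (v) I would run the cycle (i)$\Rightarrow$(iv)$\Rightarrow$(v)$\Rightarrow$(i): the first implication is immediate from the definition, since weak external hyperconvexity in $B(A,s)$ only tests points $x\in B(A,s)\subseteq X$ and the condition ``$A\in\mathcal{E}(A\cup\{x\})$'' is intrinsic to $A\cup\{x\}$; (iv)$\Rightarrow$(v) is trivial; and (v)$\Rightarrow$(i) is Lemma~\ref{Lem:WEHInNbhd}. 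Thus (i), (ii), (iv), (v), (vi) are all equivalent, and (ii)$\Rightarrow$(iii) holds trivially.

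The one genuinely new argument is the closing implication (iii)$\Rightarrow$(ii), which upgrades the cap condition at a single center to all centers. Fix the distinguished $x_0$ from (iii) and let $z\in X$ and $R\geq d(z,A)$ be arbitrary. I would choose $\rho\geq d(x_0,A)$ large enough that $B(z,R)\subseteq B(x_0,\rho)$ and $d\bigl(z,A\cap B(x_0,\rho)\bigr)=d(z,A)\leq R$; this is possible because any point of $A$ at distance at most $R+\eps$ from $z$ automatically lies in $B(x_0,\rho)$ once $\rho\geq d(x_0,z)+R+\eps$. By (iii), the cap $Y:=A\cap B(x_0,\rho)$ is weakly externally hyperconvex in $H:=B(x_0,\rho)$, which is itself hyperconvex since closed balls in a hyperconvex space are hyperconvex. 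Applying the forward direction of Lemma~\ref{Lem:BoundedWEH} inside the hyperconvex space $H$ to the subset $Y$ yields $Y\cap B_H(z,R)\in\mathcal{W}(B_H(z,R))$, where $B_H(z,R)=B(z,R)\cap H$. Since $B(z,R)\subseteq H$ we have $B_H(z,R)=B(z,R)$ and $Y\cap B_H(z,R)=A\cap B(z,R)$, so $A\cap B(z,R)\in\mathcal{W}(B(z,R))$. As $z$ and $R$ were arbitrary this is exactly (ii), and hence all of (i)--(vi) are equivalent.

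Finally, under the extra hypotheses that straight lines in $X$ are unique, that $A$ carries a convex geodesic bicombing, and that $\sigma$ is a convex geodesic bicombing on $X$, I would establish (i)$\Leftrightarrow$(vii). For (vii)$\Rightarrow$(i) one applies Proposition~\ref{Prop:LocallyExternallyHyperconvexSigmaConvex} directly: a $\sigma$-convex, uniformly locally weakly externally hyperconvex subset is weakly externally hyperconvex. For (i)$\Rightarrow$(vii), the uniform local part comes for free from the already-proven (i)$\Rightarrow$(ii) by taking any fixed radius, while $\sigma$-convexity comes from Proposition~\ref{Prop:convexity of WEH} applied to $A\in\mathcal{W}_3(X)$: its proof uses only the straightness of the curves of the bicombing on $A$ (guaranteed by convexity) together with the weak convexity inequality (part of the definition of a geodesic bicombing), and the uniqueness of straight lines in $X$ then forces this bicombing to coincide with $\sigma|_A$, so that $A$ is $\sigma$-convex. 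Alternatively this last equivalence can be read off from Theorem~\ref{Thm:BicombingTheorem}(II).

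I expect the main obstacle to be twofold. First, the bootstrapping (iii)$\Rightarrow$(ii) requires careful radius bookkeeping to guarantee both $B(z,R)\subseteq B(x_0,\rho)$ and $R\geq d(z,Y)$, so that Lemma~\ref{Lem:BoundedWEH} applies cleanly inside $H$, together with the (classical) input that balls in hyperconvex spaces are hyperconvex. Second, in the step (i)$\Rightarrow$(vii) one must reconcile the stated hypothesis that $A$ \emph{possesses a convex geodesic bicombing} with the \emph{consistent} bicombing appearing in Proposition~\ref{Prop:convexity of WEH}; the resolution is that, under uniqueness of straight lines in $X$, such a bicombing is determined by and coincides with $\sigma|_A$, hence is in fact consistent, so the two formulations agree.
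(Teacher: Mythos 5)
Your proposal is correct and assembles the theorem from exactly the same ingredients the paper intends (Lemma~\ref{Lem:BoundedWEH}, Lemma~\ref{Lem:WEHInNbhd}, Esp\'inola's characterization, and Propositions~\ref{Prop:convexity of WEH} and~\ref{Prop:LocallyExternallyHyperconvexSigmaConvex}); the paper gives no written-out proof, stating only that the equivalences follow from the preceding sections. Your bootstrapping argument for (iii)$\Rightarrow$(ii) and your remark reconciling ``convex'' versus ``consistent'' bicombings on $A$ via uniqueness of straight lines both supply details the paper leaves implicit, and both are sound.
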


Note that the extra conditions are fulfilled if $X$ is a proper metric space with finite combinatorial dimension. 


\appendix

\section{The (n,k)-Intersection Property}

Lindenstrauss' proof of the $(F,k)$-intersection property for Banach spaces \cite[Theorem~4.1]{Lin} is essentially built on the existence of barycenters. But barycenters already exist in metric spaces with a geodesic bicombing. The following theorem is for instance stated in \cite{DesL2}. Below, $\mathfrak{S}_m$ denotes the symmetric group of order $m$.

\begin{Thm}\label{Thm:barycenter}
Let $(X,d)$ be a complete metric space with a geodesic bicombing~$\sig$. Then, for $m \in \mathbb{N}$, there exists a map $\bary_m \colon X^m \to X$ 
such that the following hold:
\begin{enumerate}[(i)]
\item
$\bary_m(x_1,\ldots,x_m)$ lies in the closed $\sig$-convex hull of 
$\{x_1,\ldots,x_m\}$;
\item
$d(\bary_m(x_1,\ldots,x_m),\bary_n(y_1,\ldots,y_m)) \le 
\min_{\pi \in \mathfrak{S}_m} \tfrac{1}{m} \sum_{i=1}^m d(x_i,y_{\pi(i)})$;
\item
$\varphi(\bary_m(x_1,\ldots,x_m)) = \bary_m(\varphi(x_1),\ldots,\varphi(x_m))$ whenever 
$\varphi$ is an isometry of $X$ and $\sig$ is such that for any  $(x,y) \in X \times X$ one has $\varphi \circ \sigma_{xy} = \sigma_{\varphi(x)\varphi(y)}$, i.e. $\sigma$ is $\varphi$-equivariant.
\end{enumerate}
We then call $\bary_m$ a barycenter map.
\end{Thm}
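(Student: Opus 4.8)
The plan is to construct $\bary_m$ by induction on $m$ through an iterated averaging scheme that reduces the $m$-point barycenter to repeated use of the $(m-1)$-point one. For $m=1$ set $\bary_1(x)=x$, and for $m=2$ set $\bary_2(x_1,x_2):=\sig_{x_1x_2}(\tfrac12)$; the symmetry of $\sig$ together with the convexity estimate $d(\sig_{xy}(t),\sig_{x'y'}(t))\le(1-t)d(x,x')+t\,d(y,y')$ at $t=\tfrac12$ make all three properties immediate in these base cases, property (iii) using only $\varphi\circ\sig_{xy}=\sig_{\varphi(x)\varphi(y)}$. Assume now $m\ge 3$ and that $\bary_{m-1}$ has been built with (i)--(iii). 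I define the averaging operator $T\colon X^m\to X^m$ by
\[
T(x_1,\ldots,x_m)_i:=\bary_{m-1}(x_1,\ldots,x_{i-1},x_{i+1},\ldots,x_m),
\]
i.e.\ the $i$-th coordinate is the $(m-1)$-barycenter of the tuple with $x_i$ deleted, and on $X^m$ I introduce the normalized matching distance
\[
W(\mathbf{x},\mathbf{y}):=\min_{\pi\in\mathfrak{S}_m}\frac{1}{m}\sum_{i=1}^m d(x_i,y_{\pi(i)}).
\]
I will then let $\bary_m(\mathbf{x})$ be the common limit of the coordinates of the iterates $T^n(\mathbf{x})$.

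For convergence the key point is that $T$ contracts diameters. Comparing the tuples obtained by deleting $x_i$ and $x_j$, matching the common entries to themselves and the leftover $x_j$ to the leftover $x_i$, property (ii) of $\bary_{m-1}$ gives
\[
d\bigl(T(\mathbf{x})_i,T(\mathbf{x})_j\bigr)\le \tfrac{1}{m-1}\,d(x_i,x_j),
\]
so the diameters $D_n:=\max_{i,j}d(T^n(\mathbf{x})_i,T^n(\mathbf{x})_j)$ satisfy $D_{n+1}\le\tfrac{1}{m-1}D_n$; this is exactly where $m\ge 3$ enters, the factor being $1$ for $m=2$. Since $\bary_{m-1}$ sends a constant tuple to its common value by (i), property (ii) also yields $d(T^n(\mathbf{x})_i,T^{n+1}(\mathbf{x})_i)\le D_n\le (m-1)^{-n}D_0$, so each coordinate sequence is Cauchy; by completeness it converges, and because $D_n\to 0$ all coordinates share a single limit $\bary_m(\mathbf{x})$. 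Property (i) then follows by noting that the closed $\sig$-convex hull $C$ of $\{x_1,\ldots,x_m\}$ is closed, $\sig$-convex, and $T$-invariant, since each new coordinate lies in the closed $\sig$-convex hull of a subset of $C$; hence the limit lies in $C$.

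The heart of the matter is property (ii), which I will deduce from $W$-nonexpansiveness of $T$. After relabelling so that the identity realizes $W(\mathbf{x},\mathbf{y})$, choosing the identity matching for $T(\mathbf{x}),T(\mathbf{y})$ and applying (ii) of $\bary_{m-1}$ coordinatewise gives
\[
W(T\mathbf{x},T\mathbf{y})\le\frac{1}{m(m-1)}\sum_{i=1}^m\sum_{k\ne i}d(x_k,y_k)=\frac{1}{m}\sum_{k=1}^m d(x_k,y_k)=W(\mathbf{x},\mathbf{y}),
\]
since each index $k$ is omitted exactly $m-1$ times. As $T^n(\mathbf{x})$ converges in $W$ to the constant tuple $(\bary_m(\mathbf{x}),\ldots,\bary_m(\mathbf{x}))$ and $W$ restricted to constant tuples equals $d$, passing to the limit in $W(T^n\mathbf{x},T^n\mathbf{y})\le W(\mathbf{x},\mathbf{y})$ yields precisely (ii) (in particular its symmetry). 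Finally, (iii) is immediate by induction: if $\sig$ is $\varphi$-equivariant, then $T$ commutes with the coordinatewise action of $\varphi$, so $\varphi(T^n(\mathbf{x})_i)=T^n(\varphi\mathbf{x})_i$, and continuity of $\varphi$ gives $\varphi(\bary_m(\mathbf{x}))=\bary_m(\varphi\mathbf{x})$. The only delicate steps are isolating $m=3$ as the first genuine inductive case and carrying out the matching bookkeeping in the contraction and nonexpansiveness estimates; the remaining verifications are routine.
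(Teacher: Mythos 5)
Your construction is correct and is exactly the iterated-averaging scheme the paper sketches after the theorem statement (and attributes to \cite{DesL2}): the recursion $\bary_m(x_1,\ldots,x_m)=\bary_m(\bary_{m-1}(\hat{x}^1),\ldots,\bary_{m-1}(\hat{x}^m))$ is precisely the fixed-point identity satisfied by the common limit of your iterates $T^n(\mathbf{x})$, and your contraction factor $\tfrac{1}{m-1}$, the $W$-nonexpansiveness computation, and the equivariance argument all match the standard proof. No gaps.
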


The construction satisfies $\bary_1(x) := x$ and $\bary_2(x,y) := \sig_{xy}(\tfrac{1}{2}) = \sig_{yx}(\tfrac{1}{2})$ as well as for $m \ge 3$,
\[
\bary_m(x_1,\ldots,x_m) = \bary_m(\bary_{m-1}(\hat{x}^1),\ldots,\bary_{m-1}(\hat{x}^m)),
\]
where $\hat{x}^i := (x_1,\ldots,x_{i-1},x_{i+1},\ldots,x_m)$.

We say that a metric space $X$ has the \emph{$(n,k)$-intersection property} (short $(n,k)$-IP) if for every family of $n$ closed balls such that any subfamily of $k$ balls has a non-empty intersection, the overall intersection is non-empty. If $X$ has the $(n,k)$-IP for every $n$, then we say that $X$ has the \emph{$(F,k)$-intersection property}. Note that this is the same as saying that the subset of closed balls is a Helly family of order $k$. This yields the following generalization of \cite[Theorem~4.1]{Lin}.

\begin{Thm}
	Let $n,k \in \mathbb{N}$ such that $k \geq 2$ and $$n > \frac{4k-5+\sqrt{1+8(k-1)^2}}{2}.$$
	Then, for any complete metric space with a geodesic bicombing, the $(n,k)$-intersection property implies the $(F,k)$-intersection property.
\end{Thm}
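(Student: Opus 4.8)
The plan is to transplant Lindenstrauss' barycentric proof of \cite[Theorem~4.1]{Lin} into the metric setting, replacing the linear averaging of centres by the barycenter map $\bary_m$ of Theorem~\ref{Thm:barycenter}. Two structural facts carry the argument. First, the weak convexity assumption on $\sig$ forces every closed ball to be $\sig$-convex: taking $x'=y'$ equal to the centre in the inequality $d(\sig_{xy}(t),\sig_{x'y'}(t))\le(1-t)d(x,x')+t\,d(y,y')$ shows that $t\mapsto d(\sig_{xy}(t),x')$ is bounded by $\max\{d(x,x'),d(y,x')\}$, so barycenters of points lying in a common ball stay in that ball. Second, estimate (ii) of Theorem~\ref{Thm:barycenter}, read against a constant tuple and using that $\bary_m(x,\dots,x)=x$ by (i), bounds the distance of a barycenter from a fixed centre by the average of the individual distances.

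First I would reduce to a single inductive step. Squaring shows that the displayed bound on $n$ is equivalent to the quadratic inequality
\[
n^2-(4k-5)n+2(k-1)(k-2)>0,
\]
whose larger root is exactly the stated threshold; since the left-hand side is increasing for $n$ beyond that root, the inequality, once valid, persists for all larger $n$. Hence it suffices to prove that, under this inequality, the $(n,k)$-IP implies the $(n+1,k)$-IP; induction on $n$ then yields the $(F,k)$-IP, the cases below the threshold following trivially by repeating balls. (For $k=2$ this recovers $n>3$, in agreement with Theorem~\ref{Thm:FourToFinite}.)

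For the step, fix $n+1$ closed balls $B_i=B(x_i,r_i)$ all of whose $k$-element subfamilies intersect. As $k\ge2$, every pair meets, so $d(x_i,x_j)\le r_i+r_j$. Applying the $(n,k)$-IP to $\{B_j\}_{j\neq i}$ gives, for each $i$, a point $p_i\in\bigcap_{j\neq i}B_j$ meeting every ball except possibly $B_i$. With $b:=\bary_{n+1}(p_1,\dots,p_{n+1})$, estimate (ii) yields
\[
d(b,x_\ell)\le\tfrac{1}{n+1}\sum_i d(p_i,x_\ell)\le r_\ell+\tfrac{1}{n+1}\bigl(d(p_\ell,x_\ell)-r_\ell\bigr),
\]
so the only obstruction to $b$ lying in all balls is the excess $d(p_\ell,x_\ell)-r_\ell$ of the single stray point on its own ball. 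The core of the proof is to keep this excess under control by a second, inner averaging: over the $(k-1)$-subsets $T\subset\{j:j\neq\ell\}$ one selects $q_T\in B_\ell\cap\bigcap_{j\in T}B_j$ (non-empty, being $k$-fold intersections) and averages them, which pulls a representative into $B_\ell$ while redistributing the excess onto the remaining balls with the combinatorial weight $\binom{n-1}{k-1}/\binom{n}{k-1}=(n-k+1)/n$. Composing this inner weight with the outer factor $1/(n+1)$ produces a common point of the balls enlarged by a definite fraction $\lambda$ of the excess one started with.

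The decisive, and hardest, point is the bookkeeping of these nested averagings: one must choose the multiset to be averaged optimally and verify that the resulting contraction factor $\lambda$ is strictly less than $1$ precisely when the quadratic inequality holds — this is where the $k$-dependence enters, through the weights $(n-k+1)/n$, and is why the threshold grows linearly in $k$. Once $\lambda<1$, iterating the construction drives the enlargement to zero, and completeness finishes the argument exactly as in the Cauchy constructions of Section~\ref{Sec:Finite} (cf. Lemma~\ref{Lem:almost externally (n+1)-hyperconvex}): one chooses points in geometrically shrinking enlargements with summable mutual distances and passes to the limit, which lies in $\bigcap_{i=1}^{n+1}B_i$ since the balls are closed. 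I expect the optimization of the averaging and the exact matching of $\lambda$ with the quadratic to be the main obstacle, the $\sig$-convexity of balls and the completeness step being routine by comparison.
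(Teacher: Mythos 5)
Your high-level strategy is the right one and matches the paper's in outline: reduce to the single step ``$(n,k)$-IP $\Rightarrow$ $(n+1,k)$-IP'', use the barycenter map of Theorem~\ref{Thm:barycenter} to average witnesses of partial intersections, obtain a contraction factor $<1$ from the hypothesis on $n$, and finish by a Cauchy-sequence argument using completeness and closedness of balls. Your derivation of the equivalent quadratic $n^2-(4k-5)n+2(k-1)(k-2)>0$ is also correct and is exactly what the paper's numerical condition amounts to (in the paper it appears as $N<2N'$ with $N=\binom{n+1}{2}$ and $N'=\binom{n-k+2}{2}$).

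However, there is a genuine gap, and you have in effect flagged it yourself: the entire quantitative core of the proof --- which family of partial intersections to average over, and why the resulting contraction factor is $<1$ exactly under the stated inequality --- is left as ``the main obstacle.'' Your proposed scheme (an outer barycenter of the $n+1$ points $p_i\in\bigcap_{j\neq i}B_j$, corrected by an inner average over $(k-1)$-subsets with weight $(n-k+1)/n$) is not the scheme that produces the stated threshold, and you do not verify that it produces any contraction at all. The paper instead averages over the $N=\binom{n+1}{2}$ subsets $\alpha\subset\{1,\dots,n+1\}$ of size $n-1$, choosing $y_\alpha\in B_\alpha\cap B(x,(1+\epsilon)R)$ where $R=\max_{|J|=k-1}d(x,B_J)$; the auxiliary ball $B(x,(1+\epsilon)R)$ is essential, since it gives the uniform bound $d(y_\alpha,z_\alpha)\le 2(1+\epsilon)^2R$ on the correction terms, whereas in your version the excess $d(p_\ell,x_\ell)-r_\ell$ has no a priori bound in terms of the quantity being contracted. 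The contraction constant is then $\tfrac{N-N'}{N}\cdot 2(1+\epsilon)^2$, where $N'$ counts the $\alpha$ containing a fixed $(k-1)$-set $J$, and the condition $N-N'<N/2$ is precisely the quadratic you wrote down. Moreover, your iteration is aimed at membership in the individual balls $B_i$, but what must be contracted is the distance to the $(k-1)$-fold intersections $B_J$ (one then concludes $y_\infty\in\bigcap_J B_J=\bigcap_i B_i$ since $k\ge 2$); controlling $d(\cdot,B_J)$ rather than $d(\cdot,x_i)$ is where the averaging over $(n-1)$-subsets is actually used. Without these choices made and the count carried out, the proof is not complete.
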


\begin{proof}
	It is enough to show that the $(n,k)$-IP implies the $(n+1,k)$-IP. Let $\{ B(x_i,r_i) \}_{i=1}^{n+1}$ be a collection of closed balls such that any $k$ of them have a non-empty intersection. For $J \subset I_{n+1} := \{ 1, \ldots ,n+1 \}$, define $B_J := \bigcap_{i \in J} B(x_i,r_i)$. Fix some arbitrary $x \in X$ and let $$R := \max_{J \subset I_{n+1}, |J|=k-1} d(x,B_J).$$ For $\epsilon>0$, the ball $B(x,(1+\epsilon)R)$ and any subfamily of $k-1$ balls intersect. Consider now $\Omega := \{ \alpha \subset I_{n+1} : |\alpha|=n-1 \}$ with $N:=|\Omega| = \frac{n(n+1)}{2}$. Then, by the $(n,k)$-IP, there is some $y_\alpha \in B_\alpha \cap B(x, (1+\epsilon)R)$, for every $\alpha \in \Omega$. Let $$y := \bary_N((y_\alpha)_{\alpha \in \Omega}) \in B(x,(1+\epsilon)R).$$	
	Consider now $J \subset I_{n+1}$ with $|J|=k-1$. For all $\alpha \in \Omega$, we choose $z_\alpha \in B_J \cap B(x,(1+\epsilon)R)$ such that either $z_\alpha = y_\alpha$ if $y_\alpha \in B_J \cap B(x,(1+\epsilon)R)$ or 
	\begin{align*}
	d(y_\alpha,z_\alpha)
	&\leq (1+\epsilon) d(y_\alpha, B_J \cap B(x,(1+\epsilon)R) \\
	&\leq (1+\epsilon)(d(y_\alpha,x) + d(x, B_J \cap B(x,(1+\epsilon)R)) \leq 2(1+\epsilon)^2R,
	\end{align*}
	otherwise. Define $z:= \bary_N((z_\alpha)_{\alpha \in \Omega}) \in B_J$. Note that $$N' := |\{\alpha \in \Omega : J \subset \alpha \}| = \frac{(n-k+1)(n-k+1)}{2}$$ and hence
	\begin{align*}
		d(y,B_J) \leq d(y,z) \leq \frac{1}{N}\sum_{\alpha \in \Omega} d(y_\alpha,z_\alpha) \leq \tfrac{N-N'}{N} \cdot 2(1+\epsilon)^2R,
	\end{align*}
	i.e. $d(y,B_J) \leq c R$ for $c=\frac{2(N-N')}{N} (1+\epsilon)^2$.
	Therefore, if $\frac{N-N'}{N} < \frac{1}{2}$, we have $c<1$ for $\epsilon$ small enough. Thus we can find a sequence $(y_j)_j$ with $d(y_j,y_{j+1})\leq c^j R$ and $d(y_j,B_J) \leq c^j R$. This is a Cauchy sequence with limit point $y_\infty \in \bigcap_{i=1}^{n+1}B(x_i,r_i)$.	
\end{proof}

\textbf{Acknowledgments.} The authors gratefully acknowledge support from the Swiss National Science Foundation.




\begin{thebibliography}{99}

\bibitem{AroP}
N. Aronszajn and P. Panitchpakdi, 
Extension of uniformly continuous transformations and hyperconvex metric spaces.
Pacific J. Math. 6 (1956), 405--439.

\bibitem{BasM}
G. Basso and B. Miesch,
Conical geodesic bicombings on subsets of normed vector spaces.
arXiv:1604.04163.

\bibitem{BenL}
Y. Benyamini, J. Lindenstrauss,
Geometric nonlinear functional analysis, Vol. 1.
American Mathematical Society Colloquium Publications,
48. American Mathematical Society, Providence, RI, 2000.

\bibitem{DesL}
D. Descombes and U. Lang,
Convex geodesic bicombings and hyperbolicity.
Geom. Dedicata 177 (2015), 367--384.

\bibitem{DesL2}
D. Descombes, U. Lang,
Flats in spaces with convex geodesic bicombings.
Anal. Geom. Metr. Spaces 4 (2016), 68--84.

\bibitem{Esp}
R. Esp\' inola,
On selections of the metric projection and best proximity pairs in hyperconvex spaces.
Ann. Univ. Mariae Curie-Sk\l odowska Sect. A 59 (2005), 9--17.

\bibitem{EspL}
R. Esp\' inola, G. L\' opez,
Extension of compact mappings and $\aleph_0$-hyperconvexity.
Nonlinear Anal. 49 (2002), no. 8, 1127--1135.

\bibitem{Han}
O. Hanner,
Intersections of translates of convex bodies.
Math. Scand. 4 (1956), 65--87. 

\bibitem{Lan}
U. Lang,
Injective hulls of certain discrete metric spaces and groups. 
J. Topol. Anal. 5 (2013), 297--331.

\bibitem{Lin}
J. Lindenstrauss,
Extension of compact operators.
Mem. Amer. Math. Soc. No. 48, Providence, RI, 1964.

\bibitem{Mie}
B. Miesch,
Gluing hyperconvex metric spaces. 
Anal. Geom. Metr. Spaces 3 (2015), 102--110.

\bibitem{Mie2}
B. Miesch,
The Cartan-Hadamard Theorem for Metric Spaces with Local Bicombings.
arXiv:1509.07001.

\bibitem{MieP}
B. Miesch and M. Pav\' on,
Weakly Externally Hyperconvex Subsets and Hyperconvex Gluings.
J. Topol. Anal., DOI: http://dx.doi.org/10.1142/S1793525317500145

\bibitem{Nac}
L. Nachbin,
A theorem of the Hahn--Banach type for linear transformations.
Trans. Amer. Math. Soc. 68, 28--46 (1950)

\bibitem{Sin}
R. Sine,
Hyperconvexity and approximate fixed points.
Nonlinear Anal. 13 (1989), no. 7, 863--869.

\end{thebibliography}
\end{document}